\title{Flops and the S-duality conjecture}
\date{}
\author{Yukinobu Toda}
\dedicatory{To the memory of Kentaro Nagao}
\DeclareFontFamily{U}{rsfs}{%
\skewchar\font127}
\DeclareFontShape{U}{rsfs}{m}{n}{%
<-6>rsfs5<6-8.5>rsfs7<8.5->rsfs10}{}
\DeclareSymbolFont{rsfs}{U}{rsfs}{m}{n}
\DeclareRobustCommand*\rsfs{%
\@fontswitch\relax\mathrsfs}
\theoremstyle{plain}
\newtheorem{thm}{Theorem}[section]
\newtheorem{prop}[thm]{Proposition}
\newtheorem{lem}[thm]{Lemma}
\newtheorem{defi}[thm]{Definition}
\newtheorem{rmk}[thm]{Remark}
\newtheorem{cor}[thm]{Corollary}
\newtheorem{prop-defi}[thm]{Proposition-Definition}
\newtheorem{thm-defi}[thm]{Theorem-Definition}
\newtheorem{lem-defi}[thm]{Lemma-Definition}
\newtheorem{exam}[thm]{Example}
\newtheorem{assume}[thm]{Assumption}
\newdimen\argwidth
\def\db[#1\db]{
 \setbox0=\hbox{$#1$}\argwidth=\wd0
 \setbox0=\hbox{$\left[\box0\right]$}
  \advance\argwidth by -\wd0
 \left[\kern.3\argwidth\box0 \kern.3\argwidth\right]}
\newcommand{\bB}{\mathcal{B}}
\newcommand{\cC}{\mathcal{C}}
\newcommand{\dD}{\mathcal{D}}
\newcommand{\eE}{\mathcal{E}}
\newcommand{\fF}{\mathcal{F}}
\newcommand{\hH}{\mathcal{H}}
\newcommand{\lL}{\mathcal{L}}
\newcommand{\mM}{\mathcal{M}}
\newcommand{\oO}{\mathcal{O}}
\newcommand{\pP}{\mathcal{P}}
\newcommand{\sS}{\mathcal{S}}
\newcommand{\uU}{\mathcal{U}}
\newcommand{\xX}{\mathcal{X}}
\newcommand{\yY}{\mathcal{Y}}
\newcommand{\Supp}{\mathop{\rm Supp}\nolimits}
\newcommand{\Hom}{\mathop{\rm Hom}\nolimits}
\newcommand{\dR}{\mathbf{R}}
\newcommand{\Hilb}{\mathop{\rm Hilb}\nolimits}
\newcommand{\ch}{\mathop{\rm ch}\nolimits}
\newcommand{\td}{\mathop{\rm td}\nolimits}
\newcommand{\Ext}{\mathop{\rm Ext}\nolimits}
\newcommand{\Spec}{\mathop{\rm Spec}\nolimits}
\newcommand{\Coh}{\mathop{\rm Coh}\nolimits}
\newcommand{\cneq}{\mathrel{\raise.095ex\hbox{:}\mkern-4.2mu=}}
\newcommand{\eqcn}{\mathrel{=\mkern-4.5mu\raise.095ex\hbox{:}}}
\newcommand{\Aut}{\mathop{\rm Aut}\nolimits}
\newcommand{\Ex}{\mathop{\rm Ex}\nolimits}
\newcommand{\oPPer}{\mathop{\rm ^{0}Per}\nolimits}
\newcommand{\iPPer}{\mathop{\rm ^{-1}Per}\nolimits}
\newcommand{\pGamma}{\mathop{^{{p}}\Gamma}\nolimits}
\newcommand{\oGamma}{\mathop{^{{0}}\Gamma}\nolimits}
\newcommand{\iGamma}{\mathop{^{{-1}}\Gamma}\nolimits}
\newcommand{\pA}{\mathop{^{p}\hspace{-.2em}\mathcal{A}}\nolimits}
\newcommand{\pB}{\mathop{^{p}\hspace{-.05em}\mathcal{B}}\nolimits}
\newcommand{\oB}{\mathop{^{0}\hspace{-.05em}\mathcal{B}}\nolimits}
\newcommand{\iB}{\mathop{^{-1}\hspace{-.05em}\mathcal{B}}\nolimits}
\newcommand{\pPhi}{\mathop{^{p}\hspace{-.05em}\Phi}\nolimits}
\newcommand{\pF}{\mathop{^{p}\hspace{-.05em}\mathcal{F}}\nolimits}
\newcommand{\iF}{\mathop{^{-1}\hspace{-.05em}\mathcal{F}}\nolimits}
\newcommand{\oF}{\mathop{^{0}\hspace{-.05em}\mathcal{F}}\nolimits}
\newcommand{\pT}{\mathop{^{p}\hspace{-.05em}\mathcal{T}}\nolimits}
\newcommand{\oT}{\mathop{^{0}\hspace{-.05em}\mathcal{T}}\nolimits}
\newcommand{\pE}{\mathop{^{p}\hspace{-.05em}\mathcal{E}}\nolimits}
\newcommand{\oE}{\mathop{^{0}\hspace{-.05em}\mathcal{E}}\nolimits}
\newcommand{\pAA}{\mathop{^{p}\hspace{-.2em}A}\nolimits}
\newcommand{\pDT}{\mathop{^{p}\hspace{-.0em}\widehat{\rm DT}}\nolimits}
\newcommand{\oDT}{\mathop{^{0}\hspace{-.0em}\widehat{\rm DT}}\nolimits}
\newcommand{\iDT}{\mathop{^{-1}\hspace{-.0em}\widehat{\rm DT}}\nolimits}
\newcommand{\pPPer}{\mathop{\rm ^{\mathit{p}}Per}\nolimits}
\newcommand{\DT}{\mathop{\rm DT}\nolimits}
\newcommand{\modu}{\mathop{\rm mod}\nolimits}
\newcommand{\Imm}{\mathop{\rm Im}\nolimits}
\newcommand{\length}{\mathop{\rm length}\nolimits}
\begin{document}
\maketitle

\begin{abstract}
We prove the transformation formula of 
Donaldson-Thomas (DT) invariants counting
two dimensional torsion sheaves on
Calabi-Yau 3-folds
 under flops. 
The error term is described by the Dedekind eta function
and the Jacobi theta function, and our result
gives evidence of 
a 3-fold version of Vafa-Witten's 
S-duality conjecture. 
As an application, 
we prove a 
blow-up formula of DT type invariants 
on the total spaces of canonical 
line bundles on smooth projective 
surfaces. It gives an analogue 
of the similar blow-up formula in the 
original S-duality conjecture by Yoshioka, Li-Qin
and G\"ottsche. 
\end{abstract}

\section{Introduction}
\subsection{Background and motivation}
The purpose of this paper is to give 
evidence of a 3-fold version of Vafa-Witten's S-duality 
conjecture~\cite{VW}. 
The original S-duality conjecture
predicts the (at least almost)
modularity of 
the generating series of Euler characteristics of 
moduli spaces of stable torsion free sheaves on algebraic surfaces. 
This is still an open problem, but there exist several evidence,
and we refer to~\cite{Goinv} for the developments so far. 
One of the important evidence is 
that the above generating series transform 
under a blow-up at a point of a surface 
by a multiplication of a certain 
modular form. 
Such a blow-up formula was predicted by Vafa-Witten~\cite{VW}, 
and proved by Yoshioka~\cite{Yo1}, 
Li-Qin~\cite{WZ} and 
G\"ottsche~\cite{GoTheta}.  

Instead of stable torsion free sheaves on algebraic surfaces, 
we study semistable pure two dimensional torsion sheaves on 
3-folds
and the generating series of 
their counting invariants. 
Let $X$ be a smooth projective 
Calabi-Yau 3-fold, i.e. 
$K_X=0$ and $H^1(\oO_X)=0$. 
For 
an ample divisor $\omega$ on $X$ and 
a cohomology class $v \in H^{\ast}(X, \mathbb{Q})$, 
we have the 
\textit{(generalized) Donaldson-Thomas (DT) 
invariant} (cf.~\cite{Thom},~\cite{JS},~\cite{K-S})
\begin{align}\label{intro:DTv}
\DT_{\omega}(v) \in \mathbb{Q}
\end{align}
 which virtually counts 
$\omega$-semistable sheaves $E \in \Coh(X)$ 
satisfying $v(E)=v$. Here $v(E)$ is the \textit{Mukai vector}
\footnote{
Taking the Mukai vector, 
rather than the 
usual Chern character, is crucial 
for our purpose. 
See Remark~\ref{rmk:Mukai}.} of $E$:
\begin{align*}
v(E) \cneq \ch(E) \cdot \sqrt{\td}_X \in H^{\ast}(X, \mathbb{Q}). 
\end{align*}  
We are interested in the DT invariants (\ref{intro:DTv})
for the classes $v$ of the form 
\begin{align*}
v=(0, P, -\beta, -n)
\in H^0(X) \oplus H^2(X) \oplus H^4(X) \oplus H^6(X).
\end{align*}
Below we regard $\beta, n$ as elements of
$H_2(X)$, $\mathbb{Q}$ via Poincar\'e duality. 
By fixing $P$, 
we consider the generating 
series
\begin{align}\label{intro:DT}
\sum_{\beta\in H_2(X), n \in \mathbb{Q}}
\DT_{\omega}(0, P, -\beta, -n)q^n t^{\beta}. 
\end{align}
The sheaves which contribute to the series (\ref{intro:DT}) are
supported on two dimensional subschemes in $X$. 
The S-duality conjecture
for our 3-fold $X$
claims that the series (\ref{intro:DT}) satisfies 
(almost) modular transformation properties of Jacobi 
forms\footnote{We
refer to~\cite{EZ} for a basic of Jacobi forms.}. 
Such an expected modularity 
of the series (\ref{intro:DT})
plays an important role in~\cite{DM}
to derive the Ooguri-Strominger-Vafa (OSV)
conjecture~\cite{OSV} in string 
theory.
The OSV conjecture, 
which is not yet 
formulated in a mathematically precise way, 
may be stated as a certain approximation between 
the series (\ref{intro:DT}) for $\lvert P \rvert \gg 0$
and 
the generating series of Gromov-Witten invariants on $X$. 
The study of the 
S-duality conjecture for the series (\ref{intro:DT})
is an important subject toward
a mathematical approach of the OSV conjecture. 

In general, 
 computing the generating series (\ref{intro:DT})
in concrete examples
may be 
more difficult than those which 
appear in the S-duality conjecture for smooth 
surfaces, 
due to the possible singularities in 
the supports of two dimensional sheaves 
on 3-folds. 
Nevertheless, we can ask how the generating series (\ref{intro:DT}) transforms
under a birational transformation, and whether 
the error term is described in terms of Jacobi forms or not. 
This is an important process
 to check the validity of the 
S-duality conjecture for the series (\ref{intro:DT}), as well as 
the blow-up formula in~\cite{Yo1},~\cite{WZ},~\cite{GoTheta}
toward the
S-duality conjecture for surfaces.

\subsection{Main result}
Our main result shows that a
variant of
the generating series (\ref{intro:DT})
transforms 
under a flop 
by a multiplication of a certain meromorphic 
Jacobi form, 
which gives evidence of the
3-fold version of 
S-duality conjecture. 
Let $X$ be a smooth projective 
Calabi-Yau 3-fold
which fits into a flop diagram (cf.~Definition~\ref{defi:flop})
\begin{align}\label{intro:flop}
\xymatrix{
(C\subset X) \ar[dr]_{f}  \ar@{.>}[rr]^{\phi} &  & (X^{\dag} \supset C^{\dag})
 \ar[dl]^{f^{\dag}} \\
&  (p\in Y).  &
}
\end{align}
Here $C, C^{\dag}$ are exceptional 
loci of $f$, $f^{\dag}$
and $f(C)=f(C^{\dag})=p$. 
We assume that $C$ is an irreducible rational 
curve and let $l$ be the 
scheme theoretic length of $f^{-1}(p)$ at the generic point 
of $C$. 
For a fixed divisor class $P \in H^2(X)$
and an ample divisor $\omega$ on $Y$, 
we consider the generating series\footnote{
The invariant $\DT_{f^{\ast}\omega}(v)$
 counts
$f^{\ast}\omega$-slope semistable sheaves $E$
on $X$
with $v(E)=v$, which is well-defined 
although $f^{\ast}\omega$ is not ample.}
\begin{align}\label{intro:con}
\DT_{f^{\ast}\omega}(P) \cneq
\sum_{\beta\in H_2(X), n \in \mathbb{Q}}
\DT_{f^{\ast}\omega}(0, P, -\beta, -n)q^n t^{\beta}. 
\end{align}
 The following theorem, 
which will be proved in Subsection~\ref{subsec:proof}, 
is the main result in this paper: 
\begin{thm}\label{thm:intro}
There exist
$n_j \in \mathbb{Z}_{\ge 1}$ for $1\le j\le l$ 
such that we have the following formula: 
\begin{align*}
&\DT_{f^{\dag \ast} \omega}(\phi_{\ast}P)
=\phi_{\ast}\DT_{f^{\ast}\omega}(P)
 \\
&\hspace{20mm}\cdot 
\prod_{j=1}^{l} \left\{ 
i^{jP \cdot C-1}
\eta(q)^{-1}\vartheta_{1, 1}(q, ((-1)^{\phi_{\ast}P}t)^{jC^{\dag}}) \right\}^{jn_j P \cdot C}. 
\end{align*}
Here $\phi_{\ast}$ is the variable change
$(n, \beta) \mapsto (n, \phi_{\ast}\beta)$, 
$\eta(q)$ is the Dedekind eta function and 
$\vartheta_{a, b}(q, t)$ is the Jacobi theta function, 
given as follows:
\begin{align}\label{thetaab}
\eta(q)= q^{\frac{1}{24}}\prod_{k\ge 1}(1-q^k), \quad 
\vartheta_{a, b}(q, t) =
\sum_{k \in \mathbb{Z}}
q^{\frac{1}{2}\left(k+\frac{a}{2} \right)^2}\{(-1)^b t\}^{k+\frac{a}{2}}. 
\end{align}
\end{thm}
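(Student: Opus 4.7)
The plan is to realise the transformation as a composition of two moves: first, transport the standard DT series on $X^\dag$ to a \emph{perverse} DT series on $X$ via Bridgeland's flop derived equivalence; second, wall-cross the perverse DT series back to the ordinary $\Coh(X)$-DT series using Joyce--Song's framework for generalized DT invariants, picking up a correction supported entirely on the flopping locus. The eta/theta factors will arise from evaluating that correction explicitly on the local model of the flop.

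More precisely, first I would recall that the Bridgeland--Van den Bergh equivalence gives a Fourier--Mukai kernel $\Phi \colon D^b(X) \simeq D^b(X^\dag)$ which sends the perverse heart $\iPPer(X/Y)$ to $\Coh(X^\dag)$, and is a Calabi--Yau equivalence preserving Serre duality. Using the Joyce--Song machinery applied to $\iPPer(X/Y)$ (which is Noetherian and has finite-length 2-dimensional slope-semistable parts with respect to $\omega$ pulled back from $Y$), one defines a perverse DT series $\DT^{\per}_\omega(P)$, and $\Phi$ identifies it with $\phi_\ast^{-1}\DT_{f^{\dag\ast}\omega}(\phi_\ast P)$ after tracking the effect of $\Phi$ on Mukai vectors (this is where the $(-1)^{\phi_\ast P}$ and the shift $\beta \mapsto \phi_\ast\beta$ enter). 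This reduces the theorem to comparing $\DT^{\per}_\omega(P)$ with $\DT_{f^\ast\omega}(P)$.

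Second, the hearts $\Coh(X)$ and $\iPPer(X/Y)$ differ by a torsion pair tilt whose torsion/torsion-free parts are supported set-theoretically on the exceptional curve $C$. Applying Joyce--Song (or Kontsevich--Soibelman) wall-crossing across this tilt expresses the ratio $\DT^{\per}_\omega(P)/\DT_{f^\ast\omega}(P)$ as an exponential combination of DT invariants $N_j(n)$ counting semistable objects with Mukai vector proportional to $(0,0,-j[C],-n)$, weighted by the pairing $P\cdot(jC)=jP\cdot C$. Only the classes $1\le j\le l$ contribute, since these are precisely the effective curve classes of Bridgeland's simple perverse sheaves at the flopping contraction of length $l$.

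Third and most delicately, one must compute the local invariants $N_j(n)$. By analytic localization near $p\in Y$ they reduce to counts on the formal neighborhood of $C\subset X$, which is governed by the contraction algebra $A_{\con}$ of length $l$. An explicit Hilbert-scheme calculation on this local model, in the spirit of the Pandharipande--Thomas computation for the conifold and its length-$l$ generalizations, yields a closed-form generating series in $q,t$ which, after substituting into the wall-crossing exponential, factorizes as the product of $\{\eta(q)^{-1}\vartheta_{1,1}(q,((-1)^{\phi_\ast P}t)^{jC^\dag})\}^{jn_j P\cdot C}$ times the sign prefactor $i^{jP\cdot C-1}$; the integers $n_j\in\mathbb{Z}_{\ge 1}$ are precisely the local Gopakumar--Vafa-type multiplicities of the $j$-fold covers of $C$. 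The main obstacle is this last step: matching the wall-crossing correction to a genuine Jacobi-form product requires a precise identification of the local DT generating series as $\eta^{-1}\vartheta_{1,1}$ (up to signs), and in particular establishing the $j$-fold-cover factorization across $1\le j\le l$. Everything else is formal once this local calculation is in hand.
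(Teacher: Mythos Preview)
Your high-level strategy---transport via Bridgeland's equivalence, then wall-cross across a tilt supported on the exceptional locus, then compute the local invariants---matches the paper's. However, there is a substantive error in your first step and a significant difference in the third.

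The error: $\Phi$ does \emph{not} send $\iPPer(X/Y)$ to $\Coh(X^{\dag})$. In the paper's conventions (following Bridgeland), $\Phi$ takes $\oPPer(X/Y)$ to $\iPPer(X^{\dag}/Y)$, i.e.\ it relates the two perverse hearts, not a perverse heart to a standard one. Consequently there is no direct identification of a ``perverse DT series on $X$'' with the ordinary DT series on $X^{\dag}$. The paper's fix is to work slope-by-slope: for each $\mu$ it introduces a further tilt $\pB_{f^{\ast}\omega}^{\mu}$ of $\pPPer_{\le 2}(X/Y)$, proves it is bounded and contains all $\hat{\mu}_{f^{\ast}\omega}$-semistable sheaves of slope $\mu$, and defines auxiliary invariants $\pDT_{f^{\ast}\omega}(v)$ via a Hall-algebra element in $\widehat{H}(\pB_{f^{\ast}\omega}^{\mu})$. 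The equivalence $\Phi$ then identifies $\oB_{f^{\ast}\omega}^{\mu}$ with $\iB_{f^{\dag\ast}\omega}^{\mu}$, giving $\oDT$ on $X$ equal to $\iDT$ on $X^{\dag}$. One must then wall-cross on \emph{both} sides (Proposition~3.7), relating $\pDT$ to $\DT$ via products over the torsion pair $(\pT,\pF)$ in $\Coh_0(X/Y)$; combining these yields the symmetric double product in Theorem~3.8. Your one-sided picture would not produce the correct error term.

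For the local computation, the paper does not use the contraction algebra or a direct Hilbert-scheme count in the Behrend-weighted case. Instead it invokes the deformation of the formal neighborhood of $C$ to a disjoint union of $(-1,-1)$-curves $C_{j,k}$ (with $1\le j\le l$, $1\le k\le n_j$), uses deformation invariance of $N_{n,\beta}^{\chi_B}$ to reduce to the conifold, and then applies the known conifold value $N_{n,m[C_{j,k}]}=1/m^2$ together with the multiple-cover formula to obtain the closed form (Lemma~3.10, Proposition~3.11). The integers $n_j$ enter precisely as the number of curves of class $j[C]$ appearing in the generic fibre of this deformation. The contraction-algebra computation you outline is what the paper uses only for the \emph{unweighted} Euler-characteristic version in the $l=1$ case (Lemma~3.13), where deformation invariance is not available. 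Finally, the Jacobi-form identification is just the Jacobi triple product applied to the resulting infinite product, once $\psi_0,\psi_1$ are computed (Proposition~2.9); there is no further factorization subtlety.
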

Recall that $\eta(q)$ is a modular form of weight $1/2$, 
$\vartheta_{1, 1}(q, t)$ is a Jacobi form of weight $1/2$ and index $1/2$. 
The result of Theorem~\ref{thm:intro} shows that 
the series (\ref{intro:con})
transforms under a flop 
by a multiplication of a meromorphic 
Jacobi form of weight $0$ and index
$\sum_{j=1}^{l} j^3 n_j (P \cdot C)/2$. 
The integer $n_j$ is defined to be the multiplicity 
of the Hilbert scheme on $X$ at some one dimensional 
subscheme in $X$ with class $j[C]$
(cf.~Definition~\ref{def:nj}). 
It also coincides with 
the genus zero Gopakumar-Vafa invariant 
with class $j[C]$ in the sense of~\cite{Katz}. 

We will also prove some variants of Theorem~\ref{thm:intro}:
the Euler characteristic version and a 
version with fixed supports. 
The former version 
is a flop formula for DT type invariants without 
Behrend functions~\cite{Beh}. 
Such invariants are not deformation invariant, 
and the computation of the error 
term of the flop formula is more subtle.  
We will give that version only in the case of $l=1$, 
while we don't need the Calabi-Yau condition. 
The latter version compares DT type invariants 
on $X$ and $X^{\dag}$, which count 
sheaves supported on a fixed divisor $S \subset X$ and 
its strict transform $S^{\dag} \subset X^{\dag}$
respectively. 
The above variants give interesting applications. 
For instance, we give a blow-up formula 
of DT type invariants on canonical line bundles on 
surfaces in Section~\ref{section:blow}. 
In the next paper~\cite{TodS2}, we 
also apply the result in this paper to 
show the modularity of the generating series of 
Hilbert schemes of points with $A_n$-type singularities.

\subsection{Blow-up formula}
As an application of a variant of Theorem~\ref{thm:intro}, 
we prove a blow-up formula 
for DT type invariants without Behrend functions
on canonical line bundles on surfaces, 
which gives an analogue of the similar blow-up formula 
in the original S-duality conjecture~\cite{Yo1}, \cite{WZ}, \cite{GoTheta}. 
Let $S$ be a smooth projective surface and $\pi \colon 
\omega_S \to S$
the total space of the 
canonical line bundle of $S$, which is a non-compact 
Calabi-Yau 3-fold. 
For an ample divisor $L$ on $S$ and 
an element
\begin{align*}
(r, l, s)\in H^0(S) \oplus H^2(S) \oplus H^4(S)
\end{align*}
the invariant
\begin{align}\label{DTL}
\DT_{L}^{\chi}(r, l, s)\in \mathbb{Q}
\end{align} 
is defined to be (roughly speaking)
the naive Euler characteristic of 
the moduli space of $L$-semistable sheaves 
$E \in \Coh(\omega_S)$ 
supported on the zero section of $\pi$
such that the Chern character of $\pi_{\ast}E$ coincides with 
$(r, l, s)$. 
Let $g \colon S^{\dag} \to S$ be a blow-up at a point in $S$
with exceptional curve $C^{\dag} \subset S^{\dag}$. 
The following result will be proved in Theorem~\ref{thm:blow}:
\begin{thm}\label{thm:blow:intro}
For fixed $r \in \mathbb{Z}_{\ge 1}$ and 
$l\in H^2(S)$, we have the following formula:
\begin{align*}
&\sum_{s, a} \DT^{\chi}_{g^{\ast}L}(r, g^{\ast}l-aC^{\dag}, -s)
q^{\frac{r}{12}+\frac{a}{2}+s}t^{a+\frac{r}{2}} \\
&\hspace{30mm} =\sum_{s} \DT^{\chi}_{L}(r, l, -s)q^{s} 
\cdot \eta(q)^{-r} 
\vartheta_{1, 0}(q, t)^{r}. 
\end{align*}
\end{thm}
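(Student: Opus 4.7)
\medskip

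\noindent\textbf{Proof proposal for Theorem~\ref{thm:blow:intro}.}
The plan is to realize the comparison between $\omega_{S^{\dag}}$ and $\omega_{S}$ as an Atiyah flop between non-compact Calabi--Yau 3-folds, and then apply the fixed-support, Euler characteristic variant of Theorem~\ref{thm:intro} mentioned in the introduction (the case $l=1$). First, I would observe that on the total space $\omega_{S^{\dag}}$, viewed as a smooth quasi-projective CY 3-fold containing the zero section $S^{\dag}$, the exceptional curve $C^{\dag}\subset S^{\dag}\subset \omega_{S^{\dag}}$ satisfies
\begin{align*}
N_{C^{\dag}/\omega_{S^{\dag}}}
\;\cong\; N_{C^{\dag}/S^{\dag}}\oplus \omega_{S^{\dag}}|_{C^{\dag}}
\;\cong\; \oO(-1)\oplus \oO(-1),
\end{align*}
since $K_{S^{\dag}}\cdot C^{\dag}=-1$. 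Hence $C^{\dag}$ admits an Atiyah flop, and locally around $p\in S$ the flopped 3-fold is isomorphic to a neighborhood of the fiber $\pi^{-1}(p)\subset \omega_{S}$. Globally this exhibits $\omega_{S^{\dag}}\dashrightarrow \omega_{S}$ as a flop over the base $Y$ obtained by contracting $C^{\dag}$, with the strict transform of the zero section $S^{\dag}$ being the zero section $S$.

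\smallskip

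Next I would invoke the fixed-support Euler characteristic version of Theorem~\ref{thm:intro}. Since the flop is of length $l=1$ and the genus zero Gopakumar--Vafa invariant of a $(-1,-1)$-curve is $n_{1}=1$, the product in the main theorem reduces to a single factor. The sheaves contributing to $\DT^{\chi}_{g^{\ast}L}(r,g^{\ast}l-aC^{\dag},-s)$ are supported on $S^{\dag}$, and under the flop equivalence they correspond (in the derived sense) to complexes on $\omega_{S}$ whose cohomology is supported on $S$. The divisor class relevant for the flop formula is $P=r[S^{\dag}]$, and the intersection
\begin{align*}
P\cdot C^{\dag}=r\,\deg\bigl(\oO_{\omega_{S^{\dag}}}(S^{\dag})|_{C^{\dag}}\bigr)
=r\,\deg(\omega_{S^{\dag}}|_{C^{\dag}})=-r
\end{align*}
will determine the exponent of the modular factor, producing $r$ copies (with appropriate sign) of $\eta^{-1}\vartheta_{1,\epsilon}$ in the error term.

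\smallskip

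The remaining work is bookkeeping. I would translate the Mukai vector $v(E)=\ch(E)\cdot\sqrt{\td}_{\omega_{S^{\dag}}}$ into the Chern character data $(r,g^{\ast}l-aC^{\dag},-s)$ of $\pi_{\ast}E$ on $S^{\dag}$ via Grothendieck--Riemann--Roch for the zero-section inclusion $i\colon S^{\dag}\hookrightarrow\omega_{S^{\dag}}$. The half-integer shifts coming from $\sqrt{\td}$, evaluated against the class of the $C^{\dag}$-direction, account precisely for the shifts $q^{r/12+a/2+s}$ and $t^{a+r/2}$ appearing in the statement; the $r/12$ reflects the $\sqrt{\td}_{S}$ contribution to $\ch_{3}$, while $a/2$ and $r/2$ come from the $\tfrac12 C^{\dag}$ terms in $\sqrt{\td}$ intersected with $(g^{\ast}l-aC^{\dag})$ and $[S^{\dag}]$. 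The variable $q$ in the flop formula survives as $q$, while the monomial $t^{\phi_{\ast}P \cdot jC^{\dag}}=t^{-r}$ is absorbed by a change of variables, and the sign $(-1)^{\phi_{\ast}P}$ flips $\vartheta_{1,1}$ to $\vartheta_{1,0}$ since $\phi_{\ast}P\cdot C$ has parity determined by $r$.

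\smallskip

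The hard parts of the argument will be: first, verifying that the local Atiyah flop identification extends correctly to a global birational relationship between $\omega_{S^{\dag}}$ and $\omega_{S}$ for which the fixed-support variant of Theorem~\ref{thm:intro} applies verbatim (in particular ensuring that sheaves supported on the zero section on one side are carried to sheaves supported on the zero section on the other, rather than genuine complexes); and second, the delicate bookkeeping of Mukai-vector half-integer shifts which produces the peculiar exponents $r/12+a/2+s$ and $a+r/2$, together with matching the theta-characteristic of $\vartheta_{1,0}$ as opposed to $\vartheta_{1,1}$. Once these are in place, the main theorem of the paper (specialized to the flop of a single $(-1,-1)$-curve, working with naive Euler characteristics and with fixed support) gives the claimed formula directly.
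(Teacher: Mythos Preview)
Your overall strategy matches the paper's: realize the blow-up as a $(-1,-1)$ flop and apply the fixed-support, Euler-characteristic variant of the main theorem. But there is a genuine gap in the execution. The flop formula you want to invoke (the paper's Theorem~\ref{cor:local}) is proved under the standing hypothesis that $f\colon X\to Y$ is a flopping contraction with $X,Y$ \emph{projective}; the Hall-algebra construction, the boundedness result (Proposition~\ref{prop:bound}), and the integration map all rely on this. So the theorem does not apply ``verbatim'' to the non-compact birational map $\omega_{S^{\dag}}\dashrightarrow\omega_{S}$. What the paper actually does, in Lemma~\ref{lem:com}, is build explicit \emph{projective} compactifications $\omega_{S}\subset X$ and $\omega_{S^{\dag}}\subset X^{\dag}$ together with a flop $X\dashrightarrow X^{\dag}$ extending yours, arranged so that the zero sections are strict transforms of one another and the polarization $L$ extends to an ample class on $Y$. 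Only after this construction can the fixed-support formula be applied. Your first ``hard part'' is therefore not a verification but a nontrivial construction, and it is the missing piece of your argument.

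There is also a smaller confusion. You attribute the passage $\vartheta_{1,1}\to\vartheta_{1,0}$ to the sign $(-1)^{\phi_{\ast}P}$, but that sign lives in the Behrend-weighted formula (Theorem~\ref{cor:local}(i)); for $\nu\equiv 1$ the error term is controlled by the width $n$ via the polynomial $f_{n}(x)=1+x+\cdots+x^{n}$ (Theorem~\ref{cor:local}(ii)). For a $(-1,-1)$-curve one has $n=1$, $f_{1}(x)=1+x$, and the Jacobi triple product turns the resulting infinite product into $\eta^{-1}\vartheta_{1,0}$; the two routes agree numerically here, but the mechanism you cite is not the one at work in the unweighted case. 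Once the compactification is in place, the Grothendieck--Riemann--Roch bookkeeping you outline is correct and matches the paper's computation.
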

It is easy to see that (cf.~Example~\ref{exam:r2}
for the rank two case)
the error term of the 
formula in Theorem~\ref{thm:blow:intro}
coincides with the error term
in the blow-up formula in the original S-duality 
conjecture~\cite{Yo1},~\cite{WZ},~\cite{GoTheta}.
The idea of the proof of Theorem~\ref{thm:blow:intro}
is 
as follows: we find a suitable flop diagram (\ref{intro:flop})
such that 
$X$, $X^{\dag}$ are compactifications
of $\omega_S$, $\omega_{S^{\dag}}$ respectively. 
Moreover the strict transform of the zero section 
$S \subset \omega_S \subset X$ coincides with 
the zero section $S^{\dag} \subset \omega_{S^{\dag}} \subset X^{\dag}$. 
Then a variant of 
Theorem~\ref{thm:intro}
compares the invariants (\ref{DTL}) on $S$ and $S^{\dag}$, 
which proves the result.

\subsection{Sketch of the proof of Theorem~\ref{thm:intro}}
Here we give a rough sketch of the proof of Theorem~\ref{thm:intro}.
Let $f \colon X \to Y$
be a 3-fold flopping contraction
as in Theorem~\ref{thm:intro}. 
By Bridgeland~\cite{Br1}, 
there exist hearts of perverse t-structures 
\begin{align*}
\pPPer_{\le 2}(X/Y) \subset D^b \Coh_{\le 2}(X)
\end{align*}
for $p=0, -1$. 
Here $\Coh_{\le 2}(X)$ is the category of torsion sheaves on $X$. 
We introduce the $f^{\ast}\omega$-slope stability 
on $\pPPer_{\le 2}(X/Y)$ (cf.~Definition~\ref{defi:mu}), 
and construct another heart 
\begin{align*}
\pA_{f^{\ast}\omega}^{\mu} \subset D^b \Coh_{\le 2}(X)
\end{align*}
by a tilting of 
$\pPPer_{\le 2}(X/Y)$ determined by the $f^{\ast}\omega$-slope stability
on it and a choice of $\mu\in \mathbb{Q}$. 
Moreover we construct the abelian subcategory
(cf.~Subsection~\ref{subsec:Tilt})
\begin{align*}
\pB_{f^{\ast}\omega}^{\mu} \subset \pA_{f^{\ast}\omega}^{\mu}
\end{align*}
which consists of objects in $\pA_{f^{\ast}\omega}^{\mu}$
whose Chern characters satisfy a 
certain linear equation. It has the following properties:
first for a given element $v \in H^{\ast}(X)$, 
the set of objects in $\pB_{f^{\ast}\omega}^{\mu}$
with Mukai vector $v$ is bounded (cf.~Proposition~\ref{prop:bound}). 
Hence we are able to define the 
completion of the stack
theoretic Hall algebra of $\pB_{f^{\ast}\omega}^{\mu}$, 
denoted by $\widehat{H}(\pB_{f^{\ast}\omega}^{\mu})$.
Next we see
that the category 
$\pB_{f^{\ast}\omega}^{\mu}$
contains any $f^{\ast}\omega$-slope semistable 
two dimensional sheaf on $X$ 
with slope $\mu$ (cf.~Lemma~\ref{EinCoh}). 
Hence the moduli spaces of such objects, 
which define the series (\ref{intro:con}), 
 determine an element of 
$\widehat{H}(\pB_{f^{\ast}\omega}^{\mu})$. 

For a flop diagram (\ref{intro:flop}),
there is an equivalence $\Phi$
between $D^b \Coh(X)$ and $D^b \Coh(X^{\dag})$
by Bridgeland~\cite{Brs1}. 
The equivalence $\Phi$ restricts to an equivalence 
between
$\oB_{f^{\ast}\omega}^{\mu}$ and $\iB_{f^{\dag \ast}\omega}^{\mu}$. 
Noting this fact, we construct the generating series
\begin{align}\label{intro:DTh}
\pDT_{f^{\ast}\omega}(P)
=\sum_{\beta, n} \pDT_{f^{\ast}\omega}(0, P, -\beta, -n)q^n t^{\beta}
\end{align}
whose coefficients satisfy 
\begin{align}\label{oDTi}
\oDT_{f^{\ast}\omega}(v)=\iDT_{f^{\dag\ast}\omega}(\Phi_{\ast}v). 
\end{align} 
Here $\Phi_{\ast}$ is 
the isomorphism between $H^{\ast}(X)$ and $H^{\ast}(X^{\dag})$
induced by $\Phi$, which is described in Lemma~\ref{lem:psi}. 

In Proposition~\ref{prop:delta}, 
we describe the relationship 
between moduli spaces which define (\ref{intro:con})
and (\ref{intro:DTh}) in terms of the algebra
 $\widehat{H}(\pB_{f^{\ast}\omega}^{\mu})$. 
Together with the 
integration map on the Lie algebra of 
virtual indecomposable objects in $\widehat{H}(\pB_{f^{\ast}\omega}^{\mu})$
by~\cite{JS}, 
it 
enables us to describe the 
relationship between the series (\ref{intro:con})
and (\ref{intro:DTh}). 
Combined with (\ref{oDTi}),  
we describe in Theorem~\ref{thm:main}
the relationship between the series
$\phi_{\ast}\DT_{f^{\ast}\omega}(P)$
and $\DT_{f^{\dag \ast}\omega}(\phi_{\ast}P)$.
The error term is described
in terms of DT type invariants
counting 
one dimensional semistable sheaves supported on 
the fibers of $f^{\dag} \colon X^{\dag}\to Y$. 
It is also described in terms of 
invariants counting
\textit{parabolic stable pairs} introduced in~\cite{Todpara}. 
We compute the error term by using the deformation 
invariance of these invariants
(cf.~Proposition~\ref{prop:comp}). 
In proving the Euler characteristic version of Theorem~\ref{thm:intro}, 
we give a direct classification of parabolic stable pairs 
when $l=1$ (cf.~Lemma~\ref{lem:compv}).

\subsection{Related works}
A flop formula for DT type 
curve counting invariants 
was 
obtained in the papers~\cite{HL}, ~\cite{NN}, ~\cite{Tcurve2}, \cite{Cala}. 
Among them, the papers~\cite{Tcurve2}, \cite{Cala}
(also see~\cite{BrH})
use similar Hall algebra methods, 
but we need to work with the relevant abelian category 
$\pB_{f^{\ast}\omega}^{\mu}$
which did not appear in the above papers. 
On the other hand, 
there are few mathematical literatures in which DT invariants
of the form
$\DT_{\omega}(0, P, \beta, n)$ are studied. 
In~\cite{GS3}, the modularity of these invariants is 
discussed for nodal K3 fibrations using degeneration formula. 
In~\cite{TodBG}, \cite{GS2}, some relationship between
the invariants $\DT_{\omega}(0, P, \beta, n)$ and 
DT type curve counting invariants are
studied. In~\cite{GS1}, 
the invariant $\DT_{\omega}(0, P, \beta, n)$ on local $\mathbb{P}^2$
is studied for small $P$. 
In physics literatures,
a few of the 
D4 brane counting which corresponds to the 
invariants of the form $\DT_{\omega}(0, P, \beta, n)$
are computed~\cite{GSY06}, \cite{GX07}.
Also  
the flop formula of D4D2D0 bound states
on the resolved conifold is studied in 
\cite{Nishi}, \cite{NiYa}
using Kontsevich-Soibelman's wall-crossing formula~\cite{K-S}. 
The result of Theorem~\ref{thm:intro} is interpreted as
a
mathematical justification and
 a generalization of the 
arguments in the physics articles~\cite{Nishi}, \cite{NiYa}.  
\subsection{Acknowledgment}
This paper is dedicated to the memory 
of Kentaro Nagao, who made significant 
contributions to the Donaldson-Thomas theory 
and its relationship to flops, non-commutative 
algebras~\cite{NN}, \cite{Nagao},
 during his short life. 
I would like to 
thank the referees for reading the manuscript carefully, and 
giving valuable comments. 
This work is supported by World Premier 
International Research Center Initiative
(WPI initiative), MEXT, Japan. This work is also supported by Grant-in Aid
for Scientific Research grant (22684002)
from the Ministry of Education, Culture,
Sports, Science and Technology, Japan.
\subsection{Notation and convention}
In this paper, all 
the varieties are defined over $\mathbb{C}$. 
For a $d$-dimensional variety $X$, we denote by $H^{\ast}(X, \mathbb{Q})$ 
the even part of the singular cohomologies of $X$, and
write its element as 
$(a_0, a_1, \cdots, a_d)$ for $a_i \in H^{2i}(X, \mathbb{Q})$.
We sometimes abbreviate $\mathbb{Q}$ and just write 
$H^{2i}(X, \mathbb{Q})$, $H_{2i}(X, \mathbb{Q})$
as $H^{2i}(X)$, $H_{2i}(X)$. 
For subschemes $Z_1, Z_2 \subset X$, 
the intersection $Z_1 \cap Z_2 \subset X$ always 
means the scheme theoretic intersection. 
For a triangulated category $\dD$ and a set of objects
$S$ in $\dD$, we denote by $\langle S \rangle_{\rm{ex}}$
the smallest extension closed subcategory in $\dD$
which contains $S$.
For a variety $X$
and a sheaf of (possibly non-commutative)
algebras $A$ on $X$, 
we denote by $\Coh(A)$
the abelian category of coherent right $A$-modules on $X$, 
 and $D^b \Coh(A)$ its
bounded derived category. 
We write $\Coh(\oO_X)$ as $\Coh(X)$ as usual. 
For $i\in \mathbb{Z}$, we denote by 
$\Coh_{\le i}(A) \subset \Coh(A)$
the subcategory of objects $E\in \Coh(A)$ whose 
support $\Supp(E)$ as an $\oO_X$-module satisfies
$\dim \Supp(E) \le i$. 
For $E \in D^b \Coh(A)$, we denote by 
$\hH^i(E) \in \Coh(A)$ its $i$-th 
cohomology. 

\section{Flops, perverse t-structures and their tilting}
\subsection{3-fold flops}\label{subsec:flop}
Let us recall the notion of flopping contractions and their flops. 
\begin{defi}\label{defi:flop}
A projective birational morphism $f \colon X \to Y$
is called a flopping contraction 
if $f$ is isomorphic in codimension one, $Y$
has only Gorenstein singularities, and 
the relative Picard number of $f$ equals to one. 
A flop of a flopping contraction 
$f \colon X \to Y$ is a non-isomorphic birational 
morphism 
$\phi \colon X \dashrightarrow X^{\dag}$ which fits into 
a commutative diagram
\begin{align}\label{flop:dia}
\xymatrix{
X \ar[dr]_{f}  \ar@{.>}[rr]^{\phi} &  & X^{\dag} \ar[dl]^{f^{\dag}} \\
&  Y  &
}
\end{align}
such that $f^{\dag}$ is also a flopping contraction. 
\end{defi}
It is known that
a flop is unique if it exists, and 
any birational map between 
minimal models is decomposed into a finite 
number of flops~\cite{Kawaflo}. 
We say that $f \colon X \to Y$ is a 
\textit{3-fold flopping contraction} if 
$f$ is a flopping contraction and 
$X$ is a smooth 3-fold.  
In this case, the
 exceptional locus $C$ of $f$ 
is a tree of smooth rational curves
\begin{align}\label{Exf}
C=C_1 \cup \cdots \cup C_N, \ C_i \cong \mathbb{P}^1. 
\end{align}
If $f^{\dag} \colon X^{\dag} \to Y$ is a flop of 
$f$, the exceptional locus $C^{\dag}$ of $f^{\dag}$ is 
also a tree of smooth 
rational curves $C_i^{\dag}$ with $1\le i \le N$. 
A projective line on a smooth 3-fold 
is called $(a, b)$-\textit{curve} if its normal bundle 
is isomorphic to $\oO_{\mathbb{P}^1}(a) \oplus \oO_{\mathbb{P}^1}(b)$. 
It is well-known (cf.~\cite[Section~5]{Rei})
that each $C_i$ is an $(a, b)$-curve 
for either $(a, b)=(-1, -1)$, $(0, -2)$ or $(1, -3)$. 
\begin{exam}\label{exam}
Let $Y \subset \mathbb{C}^4$ be 
the 3-fold singularity, given by
\begin{align}\label{cAn}
Y=\{ xy+z^2 -w^{2n}=0 : (x, y, z, w) \in \mathbb{C}^4\}. 
\end{align}
Then there is a flop diagram (\ref{flop:dia}), 
where $f$, $f^{\dag}$ are blow-ups at the 
ideals
\begin{align*}
I=(x, z-w^n) \subset \oO_Y, \quad 
I^{\dag}=(x, z+w^n) \subset \oO_Y
\end{align*}
respectively. 
Both of the exceptional loci $C$, $C^{\dag}$
of $f$, $f^{\dag}$
are
$(-1, -1)$-curves
if $n=1$, and $(0, -2)$-curves otherwise. 
By~\cite[Section~5]{Rei}, 
the birational map 
$\phi$ is obtained as a Pagoda diagram
\begin{align}\label{Pagoda}
X \stackrel{f_1}{\leftarrow} X_1 \stackrel{f_2}{\leftarrow}
 \cdots \stackrel{f_{n-1}}{\leftarrow} 
X_{n-1} \stackrel{f_n}{\leftarrow} X_n \stackrel{f_n^{\dag}}{\to}
 X_{n-1}^{\dag} \stackrel{f_{n-1}^{\dag}}{\to} \cdots 
\stackrel{f_2^{\dag}}{\to} X_1^{\dag} 
\stackrel{f_1^{\dag}}{\to} 
X^{\dag}. 
\end{align}
Here $f_i$, $f_i^{\dag}$ for $1\le i\le n-1$
are blow-ups at 
$(0, -2)$-curves, and $f_{n}$, $f_n^{\dag}$ are blow-ups
at $(-1, -1)$-curves. 
\end{exam}

Let $f \colon X \to Y$ be a 3-fold flopping contraction
whose exceptional locus is an irreducible 
rational curve $C \subset X$. 
We denote by $l$ the length of $\oO_{f^{-1}(p)}$ at the 
generic point of $C$, where $p=f(C)$ and 
$f^{-1}(p)$ is the scheme theoretic fiber of 
$f$ at $p$. 
Then we have 
\begin{align*}
l \in \{1, 2, 3, 4, 5, 6\}
\end{align*}
and 
$l=1$ if and only if $C$ is not a $(1, -3)$-curve
(cf.~\cite[Section~1]{KaMo}). 
Moreover if $l=1$, then
$\widehat{\oO}_{Y, p}$ is isomorphic to 
the completion of the singularity (\ref{cAn}) 
for some $n\in \mathbb{Z}_{\ge 1}$
at the origin (cf.~\cite[Remark~5.3 (b)]{Rei}). 
In this case, the integer $n$ 
is called the \textit{width} of $C$. 

Let $p\in S \subset Y$ be a general hyperplane,  
and $\overline{S} \subset X$ its proper transform. 
Then we have $C \subset \overline{S}$. 
Let $I \subset \oO_{\overline{S}}$ be
the ideal sheaf of $C$. 
For $1\le j \le l$, we have the subscheme
$C^{(j)} \subset \overline{S}$ 
whose structure sheaf is given by 
$(\oO_{\overline{S}}/I^j)/Q$, where 
$Q$ is the maximum zero dimensonal subsheaf of 
$\oO_{\overline{S}}/I^j$. 
Let $\Hilb(X)$ be the Hilbert scheme of subschemes in 
$X$. 
In~\cite[Section~2.1]{BKL}, 
it is shown that 
$C^{(j)}$ is the isolated point in $\Hilb(X)$, and 
the following number is defined: 
\begin{defi}\label{def:nj}
For $1\le j\le l$, we define $n_j \in \mathbb{Z}_{\ge 1}$
to be
\begin{align*}
n_j \cneq \length \oO_{\Hilb(X), C^{(j)}}. 
\end{align*}
By convention, we define $n_j=0$ for $j>l$. 
\end{defi}
Since $\oO_{\Hilb(X), C^{(j)}}$ is 
a finitely generated Artinian $\mathbb{C}$-algebra, 
the length of $\oO_{\Hilb(X), C^{(j)}}$
is well-defined. 
If $l=1$, the number $n_1$ equals to 
the width $n$.
In general, the number $n_j$ appears
in the context of deformations in the following way. 
Let us consider the following completion: 
\begin{align}\label{complet}
\widehat{f} \colon 
\widehat{X} \cneq X \times_Y \Spec \widehat{\oO}_{Y, p} \to 
\widehat{Y} \cneq \Spec \widehat{\oO}_{Y, p}. 
\end{align}
By~\cite[Section~2.1]{BKL},
there exists a flat deformation 
\begin{align}\label{deform}
\xymatrix{ 
\widehat{\xX} \ar[r]^{h}\ar[dr] & \widehat{\yY} \ar[d] \\
&  \Delta
}
\end{align} 
where $\Delta$ is 
a Zariski open neighborhood of 
$0\in \mathbb{A}^1$
such that $h_0 \colon \widehat{\xX}_0 \to \widehat{\yY}_0$ 
is isomorphic to $\widehat{f}$, 
and $h_t \colon \widehat{\xX}_t \to \widehat{\yY}_t$
for $t \in \Delta \setminus \{0\}$ 
is a flopping contraction whose 
exceptional locus is a disjoint union of
$(-1, -1)$-curves. 
Then the number $n_j$ coincides with 
the number of $h_t$-exceptional 
$(-1, -1)$-curves $C' \subset \widehat{\xX}_t$
for $t\neq 0$ whose class is $j[C']$, i.e. 
for any line bundle $\lL$ on $\widehat{\xX}$, we have
\begin{align*}
\deg(\lL|_{C'})=j \deg(\lL|_{C})
\end{align*}
 where 
we regard $C$ as a curve on the central fiber of 
$\widehat{\xX} \to \Delta$. 
In what follows, we write 
the exceptional locus of $h_t$ for $t\neq 0$ 
as
\begin{align*}
C_{j, k} \subset \widehat{\xX}_t, \ 
1\le j\le l, \ 1\le k\le n_j
\end{align*}
where $C_{j, k}$ is a $(-1, -1)$-curve with class $j[C]$.

\subsection{Perverse t-structures}\label{subsec:perv}
Let $f \colon X \to Y$
be a 3-fold flopping contraction. 
In this situation, Bridgeland~\cite[Section~3]{Br1}
associates the subcategories 
$\pPPer(X/Y) \subset D^b \Coh(X)$ for 
$p=0, -1$ as follows: 
\begin{defi}
We define $\pPPer(X/Y) \subset D^b \Coh(X)$
for $p=0, -1$ to be
\begin{align*}
\pPPer(X/Y) \cneq \left\{ E \in D^b \Coh(X) : 
\begin{array}{c}
\dR f_{\ast} E \in \Coh(Y) \\
\Hom^{<-p}(E, \cC)=\Hom^{<p}(\cC, E)=0 
\end{array} \right\}. 
\end{align*}
Here $\cC\cneq \{ F \in \Coh(X) : \dR f_{\ast} F=0\}$. 
\end{defi}
We also define $\pPPer_{\le i}(X/Y)$
to be 
\begin{align*}
\pPPer_{\le i}(X/Y) \cneq \{ E \in \pPPer(X/Y) :
\dR f_{\ast} E \in \Coh_{\le i}(Y) 
\}. 
\end{align*}
It is proved in~\cite[Section~3]{Br1} that $\pPPer(X/Y)$ 
are the hearts of bounded t-structures on the 
category $D^b \Coh(X)$. 
Similarly for $i>0$, 
the categories $\pPPer_{\le i}(X/Y)$
are the hearts of bounded t-structures
on $D^b \Coh_{\le i}(X)$, hence 
they are abelian categories. 
This fact immediately follows 
from the argument of~\cite[Section~3]{Br1}, noting 
that $\cC \subset D^b \Coh_{\le i}(X)$ for $i>0$ 
and $\dR f_{\ast}$ takes 
$D^b \Coh_{\le i}(X)$ to $D^b \Coh_{\le i}(Y)$. 

Let us consider the $i=0$ case. 
In this case, 
$\pPPer_0(X/Y) \cneq \pPPer_{\le 0}(X/Y)$
is not a subcategory of $D^b \Coh_{\le 0}(X)$. 
Instead, we consider the derived category of 
the following abelian category
\begin{align*}
\Coh_0(X/Y) \cneq \{ F \in \Coh_{\le 1}(X) : 
f_{\ast} F \in \Coh_{\le 0}(F) \}. 
\end{align*}
We set
\begin{align}\label{pTF}
\pT\cneq\pPPer_0(X/Y) \cap \Coh(X), \ 
\pF\cneq \pPPer_0(X/Y)[-1] \cap \Coh(X).
\end{align}
By~\cite{MVB}, 
the pairs of categories $(\pT, \pF)$
form torsion pairs (cf.~\cite{HRS})
of $\Coh_0(X/Y)$
such that $\pPPer_0(X/Y)$ is the 
associated tilting:
\begin{align}\label{Per0}
\pPPer_0(X/Y)=\langle \pF[1], \pT \rangle_{\rm{ex}}. 
\end{align}
In particular, the categories 
$\pPPer_0(X/Y)$ 
are the hearts of bounded t-structures on $D^b \Coh_0(X/Y)$. 
The torsion pairs $(\pT, \pF)$
are also described in terms of 
semistable sheaves. 
For $F \in \Coh_0(X/Y)$
and an ample divisor $H$ on $X$, we set
\begin{align}\label{muH}
\mu_H(F) \cneq \frac{\ch_3(F)}{\ch_2(F) \cdot H}. 
\end{align}
Here $\mu_H(F)$ is set to be
$\infty$ if the denominator is zero.
The above slope function defines the $\mu_H$-stability 
on $\Coh_0(X/Y)$ in the usual way. 
\begin{lem}\label{desc:F}
We have the following descriptions of $\pF$: 
\begin{align*}
\oF&=
\langle E \in \Coh_0(X/Y) : E \mbox{ is }\mu_H 
\mbox{-semistable with }
\mu_H(E) < 0 \rangle_{\rm{ex}} \\
\iF&=
\langle E \in \Coh_0(X/Y) : E \mbox{ is }\mu_H 
\mbox{-semistable with }
\mu_H(E) \le 0 \rangle_{\rm{ex}}. 
\end{align*}
\end{lem}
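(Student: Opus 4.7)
The plan is to characterize $\pF$ first by unwinding the definition of $\pPPer_0(X/Y)$ applied to $F[1]$, and then to identify the resulting description with the Harder--Narasimhan filtration cut at slope $0$. Since $\dR f_* F$ lives in degrees $0$ and $1$, the condition $\dR f_* F[1] \in \Coh(Y)$ forces $f_* F = 0$, and the orthogonality conditions with $\cC$ are automatic for shifts of sheaves except when $p = 0$, where $\Hom^{<0}(\cC, F[1]) = 0$ gives the additional constraint $\Hom(\cC, F) = 0$. Thus I expect
\[
\iF = \{F \in \Coh_0(X/Y) : f_* F = 0\}, \qquad
\oF = \{F \in \iF : \Hom(\cC, F) = 0 \text{ for all } \cC \in \cC\}.
\]

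The central technical input relating these to $\mu_H$ comes from Riemann--Roch. Since $K_X \cdot C_i = 0$ for every exceptional rational curve $C_i$ (standard for flopping contractions), any $F \in \Coh_0(X/Y)$ satisfies $\chi(F) = \ch_3(F)$, while the Leray spectral sequence gives $\chi(F) = h^0(f_* F) - h^0(R^1 f_* F)$. Combining yields the key implication
\[
f_* F = 0 \;\Longrightarrow\; \chi(F) \le 0 \;\Longrightarrow\; \mu_H(F) \le 0,
\]
with equality $\mu_H(F) = 0$ forcing $R^1 f_* F = 0$ as well, i.e.\ $F \in \cC$.

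For the first identity: if $F \in \iF$, the top HN subsheaf $F_1 \subset F$ satisfies $f_* F_1 \subset f_* F = 0$, so $\mu_H(F_1) \le 0$; as $F_1$ realises the maximal HN slope, every HN factor of $F$ has slope $\le 0$. The converse is immediate since $\{F : f_* F = 0\}$ is extension closed. For the second identity, given $F$ with all HN factors of slope $< 0$, the first identity yields $f_* F = 0$, and any $\phi \colon \cC \to F$ with $\cC \in \cC$ produces an exact sequence $0 \to \ker\phi \to \cC \to \im\phi \to 0$ to which $\dR f_*$ applies; using $f_* \im\phi \subset f_* F = 0$ one reads off $\ker\phi \in \cC$ from the long exact sequence, hence $\chi(\im\phi) = 0$ and $\mu_H(\im\phi) = 0$; but $\im\phi \subset F$ demands $\mu_H(\im\phi) \le \mu_{\max}(F) < 0$, a contradiction. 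Conversely, if $F \in \oF$ had an HN factor $F_1$ of slope $0$, the key implication above puts $F_1 \in \cC$ and the inclusion $F_1 \hookrightarrow F$ contradicts $\Hom(\cC, F) = 0$.

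The most delicate step is the boundary slope $\mu_H = 0$ in the $\oF$ case: the $\dR f_*$ long exact sequence must conspire to force $\ker\phi \in \cC$, which is exactly what separates the $p = 0$ and $p = -1$ regimes by deciding whether slope-zero objects contribute to $\pT$ or $\pF$.
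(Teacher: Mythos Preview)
Your approach is genuinely different from the paper's. The paper proves this lemma by invoking Bridgeland stability: it realizes $({}^0\mathcal{T},{}^0\mathcal{F})$ as the slicing of a stability condition $\sigma_0$ on $D^b\Coh_0(X/Y)$ with heart ${}^0\Per_0(X/Y)$, deforms to a nearby $\sigma_t$ with heart $\Coh_0(X/Y)$, and then identifies the new slicing with the $\mu_H$-Harder--Narasimhan truncation. Your argument is more elementary and self-contained, unwinding the perverse conditions directly and using only Riemann--Roch and the vanishing $K_X\cdot C_i=0$.

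There is, however, a real gap in your ``converse is immediate'' step for ${}^{-1}\mathcal{F}$. Extension-closedness of $\{F:f_*F=0\}$ only reduces the problem to showing that every $\mu_H$-semistable $E$ with $\mu_H(E)\le 0$ satisfies $f_*E=0$, and this is \emph{not} what your key implication says: you proved $f_*F=0\Rightarrow\mu_H(F)\le 0$, which is the opposite direction. As written, nothing prevents a semistable $E$ of slope $\le 0$ from having $f_*E\neq 0$.

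The cleanest fix is to argue symmetrically on ${}^{-1}\mathcal{T}$. From the definition one finds ${}^{-1}\mathcal{T}=\{T\in\Coh_0(X/Y):R^1f_*T=0,\ \Hom(T,\cC)=0\}$. For $T\in{}^{-1}\mathcal{T}$, the minimal HN quotient $Q$ satisfies $R^1f_*Q=0$ (right-exactness), hence $\chi(Q)\ge 0$ and $\mu_H(Q)\ge 0$; if $\mu_H(Q)=0$ then also $f_*Q=0$, so $Q\in\cC$ and $T\twoheadrightarrow Q$ contradicts $\Hom(T,\cC)=0$. Thus $\mu_{\min}(T)>0$. Now for semistable $E$ with $\mu_H(E)\le 0$, decompose $0\to T\to E\to F\to 0$ along the torsion pair $({}^{-1}\mathcal{T},{}^{-1}\mathcal{F})$; if $T\neq 0$ then $\mu_H(T)\le\mu_H(E)\le 0<\mu_{\min}(T)\le\mu_H(T)$, absurd. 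Hence $E\in{}^{-1}\mathcal{F}$, completing the converse. Your treatment of ${}^0\mathcal{F}$ is fine once this is in place.
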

\begin{proof}
The result essentially follows from~\cite[Proposition~5.2 (iii)]{ToBPS}. 
For simplicity, suppose that $p=0$. 
For $B+iH \in H^2(X, \mathbb{C})$
and $F \in D^b \Coh_0(X/Y)$, we set
\begin{align*}
Z_{B, H}(F) \cneq -\ch_3(F)+ (B+iH)\ch_2(F).
\end{align*}
Then
by~\cite[Proposition~5.2 (iii)]{ToBPS},  
the pair 
\begin{align*}
\sigma_0 \cneq (Z_{-\delta H, 0}, \oPPer_0(X/Y)), \quad 
0<\delta \ll 1
\end{align*}
determines a Bridgeland stability condition~\cite{Brs1} 
on $D^b \Coh_0(X/Y)$. 
Note that $Z_{-\delta H, 0}(F)$
for any non-zero $F \in \oPPer_0(X/Y)$ is contained in 
$\mathbb{R}_{<0}$. 
Moreover
the pair 
\begin{align*}
\sigma_t \cneq (Z_{-\delta H, tH}, \Coh_0(X/Y)), \quad 
0<t\ll 1
\end{align*}
gives a small deformation of $\sigma_0$
in the space of Bridgeland stability conditions on $D^b \Coh_0(X/Y)$. 
Let $\pP_t(\phi) \subset D^b \Coh_0(X/Y)$
be the full subcategory for each $\phi \in \mathbb{R}$, 
which gives the slicing in the sense of~\cite{Brs1}
corresponding to $\sigma_t$. 
Let us take $E \in \pP_t(\phi)$ with 
$1/2 <\phi \le 1$.
We have $E \in \Coh_0(X/Y)$, 
and there is an exact sequence
\begin{align*}
0 \to T \to E \to F \to 0
\end{align*}
with $T\in \oT$ and $F \in \oF$. 
Since $F[1] \in \oPPer_0(X/Y) \cap (\Coh_0(X/Y)[1])$, 
it follows that $\arg Z_{-\delta H, tH}(F) \in (0, \pi/2)$.
Hence the $Z_{-\delta H, tH}$-semistability of $E$
yields $F=0$, i.e. $E \in \oT$. 
This implies that, 
in the notation of~\cite{Brs1},
we have $\pP_t(1/2, 1]\subset \oT$, 
and a similar argument also shows that 
$\pP_t(0, 1/2) \subset \oF$.
Since both of 
$(\oT, \oF)$ and $(\pP_t(1/2, 1], \pP_t(0, 1/2))$
are torsion pairs of $\Coh_0(X/Y)$, they must coincide. 
Note that an object $E \in \Coh_0(X/Y)$ is 
contained in $\pP_t(\phi)$ for $0<\phi<1/2$
and $0<t \ll 1$
if and only if $E$ is $\mu_H$-semistable 
with $\mu_H(E) < -\delta$. 
Since we can take $\delta>0$ arbitrary close to zero, 
we obtain the result.  
\end{proof}
Let $\phi \colon X \dashrightarrow X^{\dag}$ be the 
flop of $f$. 
By~\cite{Br1}, there is an equivalence 
\begin{align}\label{Deq}
\Phi \colon D^b \Coh(X) \stackrel{\sim}{\to} D^b \Coh(X^{\dag})
\end{align}
which takes $\oPPer(X/Y)$ to $\iPPer(X^{\dag}/Y)$. 
Furthermore the equivalence $\Phi$ is given 
by the Fourier-Mukai functor with kernel 
$\oO_{X \times_Y X^{\dag}}$ (cf.~\cite{Ch}), 
hence $\Phi$ also takes 
$D^b \Coh_{\le i}(X)$ to $D^b \Coh_{\le i}(X^{\dag})$ 
for $i>0$ and 
$\oPPer_{\le i}(X/Y)$ to $\iPPer_{\le i}(X^{\dag}/Y)$
for $i\ge 0$. 
\begin{lem}\label{lem:OtoO}
We have $\Phi(\oO_X) \cong \oO_{X^{\dag}}$. 
\end{lem}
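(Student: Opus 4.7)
The plan is to compute $\Phi(\oO_X)$ directly from the Fourier-Mukai kernel $\oO_{X \times_Y X^{\dag}}$ and then apply base change. Since the projection $X \times X^{\dag} \to X$ is flat, one immediately has
$$
\Phi(\oO_X) = \mathbf{R}p_{\dag \ast}\, \oO_{X \times_Y X^{\dag}},
$$
where $p \colon X \times_Y X^{\dag} \to X$ and $p_{\dag} \colon X \times_Y X^{\dag} \to X^{\dag}$ denote the projections from the fiber product.

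Next I apply derived base change to the Cartesian square
$$
\xymatrix{
X \times_Y X^{\dag} \ar[r]^-{p_{\dag}} \ar[d]_{p} & X^{\dag} \ar[d]^{f^{\dag}} \\
X \ar[r]_{f} & Y,
}
$$
which, provided $f$ and $f^{\dag}$ are Tor-independent over $Y$, gives
$$
\mathbf{R}p_{\dag \ast}\, \mathbf{L}p^{\ast}\oO_X = \mathbf{L}f^{\dag \ast}\, \mathbf{R}f_{\ast}\oO_X.
$$
Because $f$ is a crepant contraction from a smooth 3-fold, $Y$ has rational Gorenstein singularities; hence $\mathbf{R}f_{\ast}\oO_X = \oO_Y$, and likewise $\mathbf{L}f^{\dag \ast}\oO_Y = \oO_{X^{\dag}}$. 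Tor-independence also gives $\mathbf{L}p^{\ast}\oO_X = \oO_{X \times_Y X^{\dag}}$, so combining the two displays produces $\Phi(\oO_X) \cong \oO_{X^{\dag}}$.

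The substantive step is thus the Tor-independence of $f$ and $f^{\dag}$ over $Y$, equivalently the vanishing $\TOR^i_{\oO_Y}(\oO_X, \oO_{X^{\dag}}) = 0$ for $i > 0$. Since both $f$ and $f^{\dag}$ are isomorphisms outside their exceptional loci, this is a local question around $p = f(C) \in Y$ and reduces to the completion (\ref{complet}). One natural approach is to deform via (\ref{deform}): for the generic fiber the exceptional locus consists of disjoint $(-1,-1)$-curves, and the fiber product is identified explicitly with the common blow-up of the two small resolutions, whose exceptional divisor is a $\mathbb{P}^1$-bundle; this yields the required vanishing at once. The statement on the special fiber then follows by flatness and semicontinuity, or one may iterate the computation along the Pagoda decomposition (\ref{Pagoda}) instead. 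The Tor-independence is also implicit in Bridgeland's verification that $\Phi$ is fully faithful in~\cite{Br1}, so in practice one can simply invoke that result.
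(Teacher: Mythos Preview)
Your approach is genuinely different from the paper's. The paper argues abstractly: $\oO_X$ is a local projective object of $\oPPer(X/Y)$ by Van den Bergh~\cite[Lemma~3.2.4]{MVB}, so $\Phi(\oO_X)$ is local projective in $\iPPer(X^{\dag}/Y)$; by~\cite[Proposition~3.2.6]{MVB} such an object must be a line bundle, and since it agrees with $\oO_{X^{\dag}}$ away from the codimension-two locus $C^{\dag}$ it equals $\oO_{X^{\dag}}$. No computation with the kernel is needed.

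Your direct base-change computation is natural, but the justification of Tor-independence has real gaps. First, the semicontinuity step is in the wrong direction: the fiber dimension of $\TOR^i$ is \emph{upper} semicontinuous, so vanishing on the generic fiber of the deformation~(\ref{deform}) tells you nothing about the special fiber. Second, the Pagoda decomposition~(\ref{Pagoda}) exists only in the $l=1$ situation of Example~\ref{exam}; it does not cover the $(1,-3)$-curves with $l\ge 2$ that the lemma must also handle. Third, Tor-independence is not implicit in Bridgeland's proof in~\cite{Br1}: that argument constructs the equivalence via moduli of perverse point sheaves and never writes down the kernel, while Chen's identification of the kernel in~\cite{Ch} does not proceed through Tor-independence either. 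So as written your argument is complete only for $(-1,-1)$-curves (and, via Pagoda, $(0,-2)$-curves), and the appeal to~\cite{Br1} does not close the remaining cases. The paper's categorical route sidesteps this issue entirely, which is precisely its advantage.
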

\begin{proof}
The object $\oO_X \in \oPPer(X/Y)$ is a local 
projective object in $\oPPer(X/Y)$
by~\cite[Lemma~3.2.4]{MVB}, 
hence $\Phi(\oO_X) \in \iPPer(X^{\dag}/Y)$ is a 
local projective object. 
By~\cite[Proposition~3.2.6]{MVB}, 
the object $\Phi(\oO_X)$ must be a line bundle, 
hence it must be isomorphic to $\oO_{X^{\dag}}$. 
\end{proof}
The abelian categories $\pPPer(X/Y)$ are 
also related to sheaves of non-commutative algebras. 
By~\cite{MVB},
there are vector bundles $\pE$ on $X$
which admit equivalences
\begin{align}\label{PhiA}
\pPhi \cneq \dR f_{\ast} \dR \hH om(\pE, \ast) 
\colon D^b \Coh(X) \stackrel{\sim}{\to}
D^b \Coh(\pAA_Y)
\end{align}
where $\pAA_Y \cneq f_{\ast} \eE nd(\pE)$ are sheaves of non-commutative 
algebras on $Y$. 
The equivalence $\Phi$ restrict to equivalences 
between $\pPPer(X/Y)$ and $\Coh(\pAA_Y)$.

\subsection{Induced morphism on cohomologies}
For an object $E \in D^b \Coh(X)$, its Mukai 
vector is defined by 
\begin{align*}
v(E) \cneq \ch(E) \cdot \sqrt{\td}_X \in H^{\ast}(X). 
\end{align*}
The $H^{2i}(X)$-component of $v(E)$ is
denoted by $v_i(E)$. 
Let $\Phi$ be the derived equivalence (\ref{Deq}). 
By the Grothendieck Riemann-Roch theorem,  
there is a commutative diagram (\cite[Section~2.3]{Cal2})
\begin{align}\label{com:dia}
\xymatrix{
D^b \Coh(X) 
\ar[r]^{\Phi} \ar[d]_{v}  &  D^b \Coh(X^{\dag}) \ar[d]^{v} \\
H^{\ast}(X, \mathbb{Q}) \ar[r]^{\Phi_{\ast}} & H^{\ast}(X^{\dag}, \mathbb{Q}).
}
\end{align}
Here $\Phi_{\ast}$ is defined by the 
correspondence $v(\oO_{X\times_Y X^{\dag}})$. 
The map $\Phi_{\ast}$ is an isomorphism, which 
does not preserve the grading. 
However since $v(\oO_{X\times_Y X^{\dag}})$
is written as
\begin{align*}
v(\oO_{X\times_Y X^{\dag}})=[X \times_Y X^{\dag}]
+(\mbox{elements in } H^{\ge 7}(X \times X^{\dag})), 
\end{align*}
the map $\Phi_{\ast}$ takes 
$H^{\ge i}(X)$
to $H^{\ge i}(X^{\dag})$
for any $i\in \mathbb{Z}$. 
Therefore $\Phi_{\ast}$ induces the 
graded isomorphism
\begin{align}\label{gisom}
\bigoplus_{i\in \mathbb{Z}} H^{\ge i}(X)/H^{\ge i+1}(X)
\stackrel{\sim}{\to} 
\bigoplus_{i\in \mathbb{Z}} 
H^{\ge i}(X^{\dag})/H^{\ge i+1}(X^{\dag}). 
\end{align}
We denote by $\phi_{\ast} \colon H^{\ast}(X) \stackrel{\cong}{\to} 
H^{\ast}(X^{\dag})$
the graded isomorphism (\ref{gisom}). 
The isomorphism $\phi_{\ast}$ is 
given by the correspondence 
$[X \times_Y X^{\dag}]$. 

We set $\Gamma$ to be
\begin{align*}
\Gamma \cneq \Imm \left( v(\ast) \colon D^b \Coh_{\le 2}(X) \to 
H^{\ge 2}(X, \mathbb{Q})  \right). 
\end{align*}
We identify 
$H^4(X), H^6(X)$ with $H_2(X), 
H_0(X) \cong \mathbb{Q}$
via Poincar\'e duality
respectively. 
We write an element of $\Gamma$
as $(P, \beta, n)$ for 
$P \in H^2(X)$, $\beta \in H_2(X)$ and $n\in \mathbb{Q}$. 
Let $H_2(X/Y)$ be the kernel
of 
\begin{align*}
f_{\ast} \colon H_2(X) \to H_2(Y).
\end{align*}
We use the following description of 
the action of $\Phi_{\ast}$ on $\Gamma$: 
\begin{lem}\label{lem:linear}
There exist
linear maps 
\begin{align*}
\psi_1 \colon H^2(X) \to H_2(X/Y), \ 
\psi_0 \colon H^2(X) \to \mathbb{Q}
\end{align*}
such that 
we have 
\begin{align}\label{Phist}
\Phi_{\ast}(P, \beta, n)=(\phi_{\ast}P, 
\phi_{\ast}\beta+\psi_1(P), n+\psi_0(P))
\end{align}
for any $(P, \beta, n) \in \Gamma$. 
\end{lem}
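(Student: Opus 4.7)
The plan is to combine two structural inputs already in place: $\Phi_{\ast}$ strictly preserves the filtration $H^{\ge i}$, and its associated graded is the isomorphism $\phi_{\ast}$ induced by $[X\times_Y X^{\dag}]$. These force $\Phi_{\ast}$ restricted to $H^{\ge 2}$ to be lower-triangular with $\phi_{\ast}$ on the diagonal:
\begin{align*}
\Phi_{\ast}(P, \beta, n) = (\phi_{\ast}P,\ \phi_{\ast}\beta + A(P),\ n + B(P) + C(\beta))
\end{align*}
for some linear maps $A\colon H^2(X)\to H^4(X^{\dag})$, $B\colon H^2(X)\to\mathbb{Q}$, $C\colon H^4(X)\to\mathbb{Q}$. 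It then suffices to establish $C\equiv 0$ and $A(P)\in H_2(X^{\dag}/Y)$, after which $\psi_0 \cneq B$ and $\psi_1 \cneq A$ (identified with $H_2(X/Y)$ via $\phi_{\ast}$) will complete the claim.

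For the vanishing of $C$, I would use Euler characteristics. For any $E\in\Coh_{\le 1}(X)$ with $v(E) = (0,0,\beta,n)$, Riemann-Roch gives $\chi(E) = n$ in the Calabi-Yau setting (since $c_1(X) = 0$ forces $\sqrt{\td_X} = 1 + c_2(X)/24$ and the possible $H^8$ correction vanishes on a threefold). The same calculation applied to $\Phi(E)\in D^b\Coh_{\le 1}(X^{\dag})$ with Mukai vector $(0,0,\phi_{\ast}\beta, n+C(\beta))$ yields $\chi(\Phi E) = n + C(\beta)$. But Lemma~\ref{lem:OtoO} implies
\begin{align*}
\dR\Gamma(X^{\dag},\Phi E) \cong \dR\Hom(\oO_{X^{\dag}},\Phi E) \cong \dR\Hom(\oO_X,E) \cong \dR\Gamma(X,E),
\end{align*}
so $\chi(\Phi E) = \chi(E)$, forcing $C(\beta) = 0$.

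For the image of $A$, I would exploit that the Fourier-Mukai kernel $\oO_{X\times_Y X^{\dag}}$ makes $\Phi$ relative over $Y$. With $p_1, p_2$ the projections from $Z \cneq X\times_Y X^{\dag}$, the relation $f\circ p_1 = f^{\dag}\circ p_2$, combined with $\dR p_{1\ast}\oO_Z\cong\oO_X$ (itself a consequence of $\Phi^{-1}(\oO_{X^{\dag}})\cong\oO_X$) and the projection formula, yields a canonical isomorphism $\dR f^{\dag}_{\ast}\circ\Phi \cong \dR f_{\ast}$ on $D^b\Coh(X)$. Passing to Chern characters via GRR (the crepant condition $K_{X/Y} = K_{X^{\dag}/Y} = 0$ eliminates the relative Todd corrections in the relevant bidegree) we obtain $f^{\dag}_{\ast}v_2(\Phi E) = f_{\ast}v_2(E)$ for every $E\in D^b\Coh_{\le 2}(X)$. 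Applied to $v(E) = (0,P,\beta,n)$ this reads $f^{\dag}_{\ast}\phi_{\ast}\beta + f^{\dag}_{\ast}A(P) = f_{\ast}\beta$; since $\phi_{\ast}$ is induced by the cycle $[Z]$ and $f\circ p_1 = f^{\dag}\circ p_2$ yields $f^{\dag}_{\ast}\phi_{\ast} = f_{\ast}$ on $H_2$, this forces $f^{\dag}_{\ast}A(P) = 0$, i.e.\ $A(P)\in H_2(X^{\dag}/Y)$.

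The main obstacle I anticipate is pinning down the GRR comparison cleanly at the level of cohomology: while crepancy ensures that the relative Todd contributions ought to cancel in the bidegree of interest, extracting precisely the needed identity $f^{\dag}_{\ast}v_2(\Phi E) = f_{\ast}v_2(E)$ without being confounded by higher-order Todd corrections will require careful bookkeeping.
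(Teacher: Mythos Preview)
Your approach is essentially correct and is considerably more informative than the paper's own proof, which consists of a single citation to \cite[Proposition~5.2]{ToBPS} with no further argument. Two points deserve attention.

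First, your vanishing of $C$ is argued only under the Calabi--Yau hypothesis $c_1(X)=0$, but the lemma is stated and used throughout Section~2 for an arbitrary 3-fold flopping contraction. The repair is minor: for $E\in D^b\Coh_{\le 1}(X)$ with $v(E)=(0,0,\beta,n)$, Riemann--Roch gives $\chi(E)=n+\tfrac{1}{4}\beta\cdot c_1(X)$, and likewise on $X^{\dag}$. A flopping contraction as in Definition~\ref{defi:flop} is automatically crepant (since $Y$ is Gorenstein and $f$ is an isomorphism in codimension one, $K_X=f^*K_Y$), so the projection formula together with $f^{\dag}_*\phi_*=f_*$ on $H_2$ yields $\beta\cdot c_1(X)=\phi_*\beta\cdot c_1(X^{\dag})$, and $C\equiv 0$ follows without assuming $c_1(X)=0$.

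Second, your concern about the GRR bookkeeping on the singular base $Y$ can be sidestepped entirely. From $\dR f^{\dag}_*\circ\Phi\cong\dR f_*$ and the projection formula one obtains $\chi(\Phi E\otimes f^{\dag*}\lL_Y^{\otimes m})=\chi(E\otimes f^*\lL_Y^{\otimes m})$ as polynomials in $m$; comparing the coefficients of $m$ via Riemann--Roch on $X$ and $X^{\dag}$ (not on $Y$) and using crepancy as above gives $A(P)\cdot f^{\dag*}\omega=0$ for every ample $\omega$ on $Y$, hence $A(P)\in H_2(X^{\dag}/Y)$. One small correction: the object $\dR p_{1*}\oO_Z$ is not literally $\Phi^{-1}(\oO_{X^{\dag}})$ but rather the image of $\oO_{X^{\dag}}$ under the Fourier--Mukai transform $D^b\Coh(X^{\dag})\to D^b\Coh(X)$ with the same kernel $\oO_Z$; this is nonetheless isomorphic to $\oO_X$ by applying Lemma~\ref{lem:OtoO} with the roles of $X$ and $X^{\dag}$ exchanged.
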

\begin{proof}
The result follows from~\cite[Proposition~5.2]{ToBPS}. 
The precise descriptions of $\psi_0$ and $\psi_1$ will 
be given in Lemma~\ref{lem:linear2} below. 
 \end{proof}
For $\beta \in H_2(X)$, we write 
$\beta >0$ if 
$\beta$ is a numerical class of a non-zero
 effective one cycle on $X$. 
We have the following lemma: 
\begin{lem}\label{lem:cup}
For any divisor class $P \in H^2(X)$
and curve class $\beta \in H_2(X)$, we have 
$P \cdot \beta=\phi_{\ast}P \cdot \phi_{\ast}\beta$. 
In particular, $\beta \in H_2(X/Y)$
satisfies $\beta>0$ if and only if 
$\phi_{\ast}\beta <0$ in $H_2(X^{\dag}/Y)$. 
\end{lem}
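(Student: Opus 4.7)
The first assertion will follow from the preservation of the Euler pairing under the derived equivalence $\Phi$, together with the explicit formula in Lemma~\ref{lem:linear}. Since $\Phi$ is an equivalence, $\chi(E,F)=\chi(\Phi E,\Phi F)$ for all $E,F$, and Hirzebruch--Riemann--Roch together with the Calabi--Yau condition $K_X=0$ (which gives $\sqrt{\td}_X^{\vee}=\sqrt{\td}_X$) identifies this with the Mukai pairing $\langle\alpha,\gamma\rangle_X=\int_X\alpha^{\vee}\cdot\gamma$, where the involution sends $v_i\in H^{2i}$ to $(-1)^iv_i$. Since Chern characters of complexes span $H^{\ast}(X,\mathbb{Q})$ rationally, the identity $\langle\Phi_{\ast}\alpha,\Phi_{\ast}\gamma\rangle_{X^{\dag}}=\langle\alpha,\gamma\rangle_X$ extends to arbitrary $\alpha,\gamma\in H^{\ast}(X,\mathbb{Q})$.

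I apply this with $\alpha=(0,P,0,0)$ and $\gamma=(0,0,\beta,0)$ in full Mukai notation (identifying $H^4$ with $H_2$ via Poincar\'e duality). A direct expansion shows $\langle\alpha,\gamma\rangle_X=-P\cdot\beta$. By Lemma~\ref{lem:linear} one has $\Phi_{\ast}\alpha=(0,\phi_{\ast}P,\psi_1(P),\psi_0(P))$ and $\Phi_{\ast}\gamma=(0,0,\phi_{\ast}\beta,0)$. Since $\Phi_{\ast}\gamma$ is concentrated in degree $H^4$, the only surviving term in the top-degree component of $(\Phi_{\ast}\alpha)^{\vee}\cdot\Phi_{\ast}\gamma$ is $-\phi_{\ast}P\cdot\phi_{\ast}\beta$; every contribution involving $\psi_0(P)$ or $\psi_1(P)$ pairs with a vanishing component of $\Phi_{\ast}\gamma$. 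Equating the two sides yields the first claim.

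For the second statement, I first check that $\phi_{\ast}$ restricts to a map $H_2(X/Y)\to H_2(X^{\dag}/Y)$: choosing a smooth common resolution $\tilde{W}$ with birational morphisms $\tilde{p}\colon \tilde{W}\to X$ and $\tilde{q}\colon \tilde{W}\to X^{\dag}$, the identity $f^{\dag}\circ\tilde{q}=f\circ\tilde{p}$ combined with $\tilde{p}_{\ast}\tilde{p}^{\ast}=\mathrm{id}$ (since $\tilde{p}$ has degree one) gives $f^{\dag}_{\ast}\phi_{\ast}\beta=f_{\ast}\beta=0$ for $\beta\in H_2(X/Y)$. Since the relative Picard number is one, $H_2(X^{\dag}/Y)\otimes\mathbb{R}$ is one-dimensional, and a class is negative iff it pairs negatively with any $f^{\dag}$-ample divisor $H^{\dag}$. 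Fix such an $H^{\dag}$ and let $D=\phi_{\ast}^{-1}H^{\dag}$ be its strict transform on $X$; the defining property of a flop makes $D$ relatively anti-ample over $Y$, so $D\cdot\beta<0$ for every $\beta\in H_2(X/Y)$ with $\beta>0$. The first part then yields $H^{\dag}\cdot\phi_{\ast}\beta=D\cdot\beta<0$, so $\phi_{\ast}\beta<0$; the converse direction is symmetric under the inverse flop. The main care needed is the sign bookkeeping in the Mukai involution when expanding the pairing, and the verification that $\phi_{\ast}$ preserves the relative subspace $H_2(X/Y)$.
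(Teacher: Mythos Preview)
Your approach is essentially the same as the paper's: both rest on the preservation of the Euler pairing under the equivalence $\Phi$, combined with Lemma~\ref{lem:linear}. The paper picks actual objects $E\in\Coh_{\le 2}(X)$ and $F\in\Coh_{\le 1}(X)$ and computes $\chi(E,F)$ directly via Riemann--Roch; since $\ch(E)^{\vee}\cdot\ch(F)$ already lands in $H^6$, the Todd class contributes nothing and one gets $\chi(E,F)=-\ch_1(E)\cdot\ch_2(F)$ with no hypothesis on $K_X$. You instead pass through the Mukai pairing on cohomology and invoke $K_X=0$ to identify it with the Euler pairing.

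This is a genuine, if easily repaired, gap. Lemma~\ref{lem:cup} is stated in Section~2 without any Calabi--Yau assumption, and it is applied in that generality: Lemma~\ref{lem:linear2}, Proposition~\ref{lem:psi}, and the $\nu\equiv 1$ case of Theorem~\ref{cor:ii} all use it for an arbitrary smooth projective $3$-fold flopping contraction. Your argument as written only establishes the lemma when $K_X=0$. The fix is to drop the Mukai-pairing detour and compute $\chi(E,F)$ directly for $E\in\Coh_{\le 2}(X)$, $F\in\Coh_{\le 1}(X)$ as the paper does; alternatively, you can observe that in your specific computation with $\alpha=(0,P,0,0)$ and $\gamma=(0,0,\beta,0)$ the Todd correction terms pair only against vanishing components, so the identity $\langle\alpha,\gamma\rangle_X=-P\cdot\beta$ and its $X^{\dag}$-analogue hold regardless of $K_X$.

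Your treatment of the ``in particular'' clause is more detailed than the paper's (which simply asserts it) and is correct.
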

\begin{proof}
Let us take $E \in \Coh_{\le 2}(X)$ and $F \in \Coh_{\le 1}(X)$. 
Since $\Phi$ is an equivalence, 
we have 
$\chi(E, F)=\chi(\Phi(E), \Phi(F))$. 
By the Riemann-Roch theorem, the LHS coincides with 
$-\ch_1(E) \cdot \ch_2(F)$. 
By Lemma~\ref{lem:linear}, the 
RHS coincides with $-\phi_{\ast}\ch_1(E) \cdot \phi_{\ast} \ch_2(F)$. 
Hence the result follows. 
As for the second statement, let $P$ be an $f$-ample divisor. 
Then for $\beta \in H_2(X/Y)$, we have 
$\beta>0$ if and only if $P \cdot \beta>0$, 
since $f$ is a flopping contraction. 
Since $-\phi_{\ast}P$ is $f^{\dag}$-ample, 
the equality
$P \cdot \beta=\phi_{\ast}P \cdot \phi_{\ast}\beta$
implies that $\phi_{\ast}\beta<0$. 
\end{proof}
In the following lemma, we give some more 
precise descriptions of $\psi_0$, $\psi_1$:
\begin{lem}\label{lem:linear2}
There exists
 $a\in \mathbb{Q}$
such that we have 
$\psi_1(P)=a(P\cdot C_1)C_1^{\dag}$ and 
\begin{align}\label{ab}
\psi_0(P)=\frac{1}{24}\left(c_2(X) - \phi_{\ast}^{-1}c_2(X^{\dag})\right)P
\end{align}
for any divisor class $P$.
\end{lem}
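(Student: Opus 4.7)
The plan is to establish the two formulas separately, using different inputs for each.

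For $\psi_1$: since $\Phi$ restricts to the identity on the common open $X \setminus C \cong X^{\dag} \setminus C^{\dag}$, the degree-four correction $\psi_1(P)$ is a cycle class supported on the exceptional curve. Under the assumption that $C = C_1$ is irreducible (as in the setting of the main theorem), $H_2(X^{\dag}/Y) = \mathbb{Q}[C_1^{\dag}]$, so one writes $\psi_1(P) = c(P) [C_1^{\dag}]$ for a linear functional $c \colon H^2(X) \to \mathbb{Q}$. The contribution from the Fourier--Mukai kernel $\oO_{X \times_Y X^{\dag}}$ near the exceptional locus depends on $P$ only through the restriction $P|_{C_1} \in H^2(\mathbb{P}^1, \mathbb{Q}) \cong \mathbb{Q}$, which in turn is determined by the single number $P \cdot C_1$. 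Hence $c(P) = a(P \cdot C_1)$ for some universal constant $a \in \mathbb{Q}$.

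For $\psi_0$: the key input is Lemma~\ref{lem:OtoO}, giving $\Phi(\oO_X) \cong \oO_{X^{\dag}}$. Combined with the derived equivalence property of $\Phi$, this yields $\chi(X, F) = \chi(X^{\dag}, \Phi(F))$ for every $F \in D^b \Coh(X)$. Translating via Hirzebruch--Riemann--Roch (using $v = \ch \cdot \sqrt{\td}$ and $\sqrt{\td}^2 = \td$) gives
\begin{align*}
\int_X v(F) \cdot \sqrt{\td}_X = \int_{X^{\dag}} \Phi_{\ast}(v(F)) \cdot \sqrt{\td}_{X^{\dag}},
\end{align*}
which by $\mathbb{Q}$-linearity extends to arbitrary algebraic classes $\alpha \in H^{\ast}(X, \mathbb{Q})$. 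Applying this first to $v(\oO_X) = \sqrt{\td}_X = 1 + c_2(X)/24$ (Calabi--Yau case) pins down the lower-degree corrections of $\Phi_{\ast}$ acting on $1 \in H^0$ and on $c_2(X) \in H^4$; in particular the $H^2$-component of $\Phi_{\ast}(1)$ vanishes. Specializing next to $\alpha = P \in H^2(X)$ and expanding, the only surviving degree-six contributions are $P \cdot c_2(X)/24$ on the left and $\psi_0(P) + \phi_{\ast} P \cdot c_2(X^{\dag})/24$ on the right. Rearranging and using the projection formula $\phi_{\ast} P \cdot c_2(X^{\dag}) = P \cdot \phi_{\ast}^{-1} c_2(X^{\dag})$ then produces the claimed formula.

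The main obstacle lies in the cleanup of Step~2: one must verify that the auxiliary correction $B \colon H^4(X) \to H^6(X^{\dag})$, recording the $H^6$-part of $\Phi_{\ast}$ acting on $H^4$-classes, vanishes on square classes of the form $P^2$. This is handled by observing that $B$ vanishes on classes supported away from the exceptional locus (since $\Phi$ is an isomorphism there) and also on the exceptional class $[C_1]$ itself, the latter following from an explicit Fourier--Mukai computation with the kernel $\oO_{X \times_Y X^{\dag}}$ (equivalently, from the reciprocity $\Phi_{\ast}^{-1} \circ \Phi_{\ast} = \id$ combined with the formula for $\psi_1$ from Step~1). Step~1 is otherwise straightforward once the geometric support claim is granted.
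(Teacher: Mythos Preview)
Your approach is essentially the paper's, but you have introduced two unnecessary complications and one minor restriction.

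For $\psi_1$, the paper does not assume $C$ is irreducible. It simply notes that $\psi_i(f^{\ast}D)=0$ for any Cartier divisor $D$ on $Y$ (because $\Phi$ commutes with pullback of line bundles from $Y$), so each $\psi_i$ factors through $H^2(X)/f^{\ast}H^2(Y)$. Since the relative Picard number is one, this quotient is detected by $P\mapsto P\cdot C_1$, and likewise $H_2(X^{\dag}/Y)=\mathbb{Q}[C_1^{\dag}]$ because all the $C_i^{\dag}$ are numerically proportional. This gives $\psi_1(P)=a(P\cdot C_1)C_1^{\dag}$ and $\psi_0(P)=b(P\cdot C_1)$ at once, without any irreducibility hypothesis.

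For $\psi_0$, the paper argues exactly as you do: from $\Phi(\oO_X)\cong\oO_{X^{\dag}}$ one gets $\chi(\oO_X,\oO_S)=\chi(\oO_{X^{\dag}},\Phi(\oO_S))$ for $P=[S]$, and expanding both sides via Riemann--Roch and the commutative diagram~(\ref{com:dia}) yields the formula. Your ``obstacle'' paragraph about an auxiliary correction $B\colon H^4(X)\to H^6(X^{\dag})$ is a phantom: Lemma~\ref{lem:linear} is already established and states that the $H^6$-component of $\Phi_{\ast}(P,\beta,n)$ equals $n+\psi_0(P)$, with \emph{no} contribution from $\beta$. So there is nothing to clean up.

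Finally, your explicit Calabi--Yau assumption is not needed. In a flop both $K_X$ and $K_{X^{\dag}}$ are pulled back from $K_Y$, so every $c_1$-term in the Riemann--Roch expansions matches on the two sides and cancels; the paper's displayed identity already has these terms suppressed.
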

\begin{proof}
Obviously $\psi_i(f^{\ast}D)=0$ for any 
Cartier divisor $D$ on $Y$.
Hence we can write 
$\psi_1(P)=a(P \cdot C_1)C_1^{\dag}$ and 
$\psi_0(P)=b(P \cdot C_1)$
for some $a, b\in \mathbb{Q}$. 
Here we have used that every $C_i$ is numerically proportional 
to $C_1$
 by the definition of a flopping 
contraction. 
In order to obtain (\ref{ab}), 
it is enough to prove this 
for $P=[S]$ for any irreducible divisor $S \subset X$. 
Since $\Phi(\oO_X)=\oO_{X^{\dag}}$ by Lemma~\ref{lem:OtoO}, 
we have $\chi(\oO_X, \oO_S)=\chi(\oO_{X^{\dag}}, \Phi(\oO_S))$. 
By the Riemann-Roch theorem, 
we have
\begin{align}\label{RR1}
\chi(\oO_X, \oO_S)
=\frac{P^3}{6}+ \frac{c_2(X)}{12}P.
\end{align}
On the other hand, the 
commutative 
diagram (\ref{com:dia})
implies
\begin{align*}
v(\Phi(\oO_S))=\left(\phi_{\ast}P, -\phi_{\ast} \frac{P^2}{2}, 
\frac{P^3}{6}+\frac{c_2(X)}{24}P+\psi_0(P)   \right). 
\end{align*}
Using the Riemann-Roch theorem again, we obtain
\begin{align}\label{RR2}
\chi(\oO_{X^{\dag}}, \Phi(\oO_S))
=
\frac{P^3}{6}+ \frac{c_2(X)}{24}P
+ \psi_0(P)+\frac{c_2(X^{\dag})}{24}\phi_{\ast}P. 
\end{align}
Comparing (\ref{RR1}) with (\ref{RR2}), and using 
Lemma~\ref{lem:cup}, we obtain (\ref{ab}). 
\end{proof}

If the exceptional locus
of $f$ is an irreducible rational curve, the linear maps
$\psi_0$, $\psi_1$ are described
using the integers 
$n_1, \cdots, n_l$ in Subsection~\ref{subsec:flop}. 
\begin{prop}\label{lem:psi}
Suppose that 
the exceptional locus $C$ of $f$ is an irreducible rational 
curve. Then we have 
\begin{align*}
\psi_0(P)=-\frac{1}{12}\sum_{j=1}^{l} jn_j(P\cdot C), 
\quad 
\psi_1(P)=-\frac{1}{2}\sum_{j=1}^{l} j^2 n_j(P\cdot C)C^{\dag}. 
\end{align*}
\end{prop}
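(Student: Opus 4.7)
The plan is to combine Lemma~\ref{lem:linear2} with the flat deformation (\ref{deform}) from Subsection~\ref{subsec:flop}, reducing the statement to a standard Atiyah flop computation. By Lemma~\ref{lem:linear2} we already know that $\psi_1(P)=a(P\cdot C)C^{\dag}$ for some constant $a\in\mathbb{Q}$ and $\psi_0(P)=\tfrac{1}{24}\left(c_2(X)-\phi_{\ast}^{-1}c_2(X^{\dag})\right)\cdot P$. Both sides of the asserted identities are linear in $P$ and vanish on divisors pulled back from $Y$, so it suffices to identify two rational numbers. I propose to compute them on the generic fiber of the deformation, where the exceptional locus consists of disjoint standard $(-1,-1)$-curves.

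First I would replace $f\colon X\to Y$ with its formal completion $\widehat{f}\colon\widehat{X}\to\widehat{Y}$ at $p$, noting that the linear maps $\psi_0$ and $\psi_1$ depend only on the local geometry around $C$. I would then invoke the flat deformation $h\colon\widehat{\xX}\to\widehat{\yY}$ over $\Delta$: the central fiber recovers $\widehat{f}$, while the generic fiber $h_t$ is a flopping contraction whose exceptional locus is a disjoint union of $(-1,-1)$-curves $C_{j,k}$ for $1\le j\le l$ and $1\le k\le n_j$, each in class $j[C]$. The family admits a simultaneous flop $h^{\dag}\colon\widehat{\xX}^{\dag}\to\widehat{\yY}$, the relative kernel $\oO_{\widehat{\xX}\times_{\widehat{\yY}}\widehat{\xX}^{\dag}}$ is $\Delta$-flat, and the induced action on Chern characters varies locally constantly with $t$. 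Consequently the maps $\psi_0$ and $\psi_1$ for $\widehat{f}$ agree with their generic fiber counterparts.

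On the generic fiber the equivalence $\Phi$ decomposes into local Atiyah flops indexed by $(j,k)$. For a single Atiyah flop with exceptional curve $E$ and flopped curve $E^{\dag}$, a direct computation of the Fourier-Mukai action on cohomology (for instance by using the Riemann-Roch trick of Lemma~\ref{lem:linear2} on a suitable test divisor together with Lemma~\ref{lem:OtoO}) yields the local contributions $\psi_0^{\mathrm{loc}}(P)=-\tfrac{1}{12}(P\cdot E)$ and $\psi_1^{\mathrm{loc}}(P)=-\tfrac{1}{2}(P\cdot E)E^{\dag}$. Summing over all $(j,k)$ and using $P\cdot C_{j,k}=j(P\cdot C)$ and $[C_{j,k}^{\dag}]=j[C^{\dag}]$ gives the asserted formulas for $\psi_0$ and $\psi_1$.

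The main obstacle is the deformation invariance step: one must verify that Bridgeland's equivalence $\Phi$ extends to a relative equivalence over $\Delta$ and that the induced cohomological isomorphism specializes correctly from the generic to the central fiber. Once this is granted, what remains is the explicit Atiyah flop computation combined with the combinatorics of the classes $j[C]$ weighted by the multiplicities $n_j$.
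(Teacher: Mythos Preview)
Your overall strategy matches the paper's: reduce to the Atiyah flop via the deformation (\ref{deform}), compute the $(-1,-1)$ case by hand, and sum the contributions weighted by $j$ and $n_j$. The gap you flag is real, and it is exactly where the paper's argument differs from your sketch.

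The paper does \emph{not} attempt to extend $\Phi$ to a relative equivalence over $\Delta$ or to make sense of a locally constant cohomological action on the fibers (which is delicate since the fibers $\widehat{\xX}_t$ are non-compact formal schemes and there is no obvious identification of $H^{\ast}(\widehat{\xX}_t)$ for varying $t$). Instead, it first works on the global projective $X$: choosing a sufficiently ample divisor $S$ not containing $C$, one computes $\Phi(\oO_S)$ explicitly as a sheaf and writes
\[
\psi_0(S)=\tfrac{1}{12}S^{\dag 3}-\tfrac{1}{12}S^3+\tfrac{1}{2}\chi(F),\qquad
\psi_1(S)=\tfrac{1}{2}\phi_{\ast}(S^2)-\tfrac{1}{2}S^{\dag 2}+[F],
\]
where $F=\mathrm{coker}(\oO_{S^{\dag}}\hookrightarrow p_{X^{\dag}\ast}\oO_{S\times_Y X^{\dag}})$ is supported on $C^{\dag}$. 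With a second divisor $S'\sim S$, the quantities $S^{\dag 3}-S^3$ and $S^{\dag 2}-\phi_{\ast}S^2$ are expressed via the length $c$ of $S^{\dag}\cap S'^{\dag}$ along $C^{\dag}$. The point is that $F$, $S^{\dag}\cap S'^{\dag}$, and hence $\chi(F)$, $[F]$, $c$, depend only on the completion $(\widehat{S},\widehat{S}',\widehat{X})$ and extend to \emph{flat families of sheaves and schemes} over $\Delta$. Deformation invariance then follows from flatness of these concrete objects, not from any statement about $\Phi$ in families. On the generic fiber the families split along the disjoint $C_{j,k}^{\dag}$, and the already-established $(-1,-1)$ case gives the local contributions you wrote down; summing over $(j,k)$ finishes the proof.

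So your plan is correct, but to close the gap you should replace the abstract ``relative equivalence'' argument by this concrete recasting of $\psi_i(S)$ in terms of Euler characteristics and intersection numbers of flat $\Delta$-families.
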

\begin{proof}
It is enough to prove the claim for 
one divisor class $P$ with $P \cdot C \neq 0$. 
We first prove the claim 
in the case that 
$C$ is a $(-1, -1)$-curve 
and 
there is a smooth surface $S \subset X$
such that $S \cap C$ consists of a reduced
one point, e.g. 
a suitable compactification of a flopping 
contraction in Example~\ref{exam} with $n=1$. 
In this case, 
$l=n_1=1$, and 
the flop $\phi$ is 
obtained as 
\begin{align*}
X \stackrel{g}{\leftarrow} Z \stackrel{g^{\dag}}{\rightarrow}
 X^{\dag}
\end{align*}
where $g$, $g^{\dag}$ are blow-ups
at $C$, $C^{\dag}$ respectively. 
Let $E$ be the exceptional locus of $g$, 
which is isomorphic to $\mathbb{P}^1 \times \mathbb{P}^1$. 
Let $l_1, l_2$ be the lines in $E$ which 
are contracted to points by $g$, $g^{\dag}$ respectively. 
By the blow-up formula of Chern classes (cf.~\cite{Fu}), 
we have 
\begin{align*}
c_2(Z)=g^{\ast}c_2(X)+l_2 -l_1 = g^{\dag \ast}c_2(X^{\dag})
+l_1 -l_2
\end{align*}
which shows $c_2(X)-\phi_{\ast}^{-1}c_2(X^{\dag})=
-2C$. 
By (\ref{ab}), we obtain the desired formula for $\psi_0$ in 
this case. 
On the other hand, since
the equivalence $\Phi$ 
coincides with $\dR g^{\dag}_{\ast} \circ g^{\ast}$ 
in this case, 
we have $\Phi(\oO_S) \cong \oO_{S^{\dag}}$, 
where $S^{\dag}$ is the strict transform of $S$. 
The commutative diagram (\ref{com:dia})
shows that  
\begin{align*}
\psi_1(S)=\frac{1}{2}\phi_{\ast}(S^2) -\frac{1}{2}S^{\dag 2}. 
\end{align*}
By the base point free theorem for $f \colon X \to Y$, 
there is a divisor $S'$ on $X$ which is linearly equivalent 
to $S$ such that $S' \cap C$ consists of a
reduced one point which is different 
from $S \cap C$. 
The intersection
$S^{\dag} \cap S^{'\dag}$ contains 
$C^{\dag}$ as a connected component, which is reduced. 
Hence we have
 $S^{\dag 2}-\phi_{\ast}(S^2)=C$, 
and we obtain the desired formula for $\psi_1$ in this case. 

Next we prove the general case. 
Let $S\subset X$ be an irreducible 
divisor which is sufficiently ample
and does not contain $C$. 
We have
\begin{align*}
\Phi(\oO_{S}) \cong 
\dR p_{X^{\dag}\ast} \left( \oO_{S \times_Y X^{\dag}}  \right)
\end{align*}
which is a sheaf since 
$S \times_Y X^{\dag} \to X^{\dag}$ is finite onto its image. 
Here $p_{X^{\dag}}$ is the projection from 
$X\times_Y X^{\dag}$ to $X^{\dag}$. 
Let $F$ be the sheaf obtained as the cokernel of the natural 
injection $\oO_{S^{\dag}} \to p_{X^{\dag}\ast}\oO_{S \times_Y X^{\dag}}$. 
Note that $F$ is supported on $C^{\dag}$, 
and it depends only on the pair $(S, X)$
restricted to 
the completion (\ref{complet}). 
Using (\ref{Phist}), (\ref{ab}) and noting $\ch_3(F)=\chi(F)$, 
we obtain 
\begin{align}\label{form:psi}
\psi_0(S)&=\frac{1}{12}S^{\dag 3}-\frac{1}{12}S^3 + \frac{1}{2}\chi(F) \\
\notag
\psi_1(S)&=\frac{1}{2}\phi_{\ast}(S^2) -\frac{1}{2}S^{\dag 2} +[F]. 
\end{align}
Let $S' \subset X$ be another divisor which 
is linearly equivalent to $S$ and does not contain $C$. 
We can take $S'$ so that $S \cap S'\cap C=\emptyset$. 
Then we have $S^{'\dag 2}-\phi_{\ast}S^2= c C^{\dag}$, 
where $c$ is the length of $S^{\dag} \cap S^{'\dag}$ at the generic 
point of $C$, and 
$S^{\dag 3}-S^3=-c(S \cdot C)$
by Lemma~\ref{lem:cup}. 
Hence $\psi_i(S)$ depends only on the
data $(S, S', X)$
restricted to the completion $\widehat{X}$, 
denoted by $(\widehat{S}, \widehat{S}', \widehat{X})$.  
In particular, the result holds for any 
3-fold flopping contraction which 
contracts a $(-1, -1)$-curve to a point. 

Let $h \colon \widehat{\xX} \to \widehat{\yY}$ 
be a deformation as in (\ref{deform}), 
and take its flop $h^{\dag} \colon \widehat{\xX}^{\dag} \to 
\widehat{\yY}$.  
Since $H^2(\oO_{\widehat{X}})=0$, 
by shrinking $\Delta$ if necessary, 
the divisors $\widehat{S}, \widehat{S}'$ deform to $h$-ample 
divisors $\widehat{\sS}, \widehat{\sS'} \subset \widehat{\xX}$
which are flat over $\Delta$.
Let $\fF$ be the sheaf on $\widehat{\xX}^{\dag}$
obtained as the cokernel of 
the injection $\oO_{\widehat{\xX}^{\dag}} \to 
p_{\widehat{\xX}^{\dag} \ast} \oO_{\widehat{\sS} \times_{\widehat{\yY}} 
\widehat{\xX}^{\dag}}$, 
where $p_{\widehat{\xX}^{\dag}}$ is the projection 
from $\widehat{\xX} \times_{\widehat{\yY}} \widehat{\xX}^{\dag}$
to $\widehat{\xX}^{\dag}$. 
The sheaf $\fF$ is a flat deformation of $F$, and for
 $t \in \Delta \setminus \{0\}$ the restriction $\fF_t =\fF|_{\xX_t}$
decomposes into the direct sum of 
$\fF_{t, k, j}$ where $\fF_{t, k, j}$ is supported on 
$C_{k, j}^{\dag}$. 
Also let $\widehat{\sS}^{\dag}$, $\widehat{\sS}^{'\dag} \subset
\widehat{\xX}^{\dag}$
be the strict transforms of $\widehat{\sS}$, $\widehat{\sS}'$
respectively. 
The intersection $\widehat{\sS}^{\dag} \cap \widehat{\sS}^{'\dag}$
is a flat deformation of 
$\widehat{S} \cap \widehat{S}'$. 
For $t\in \Delta \setminus \{0\}$, 
the fundamental cycle of
 $(\widehat{\sS}^{\dag} \cap \widehat{\sS}^{'\dag})_t=
\widehat{\sS}_t^{\dag} \cap \widehat{\sS}^{'\dag}_t$
is written as 
$\sum_{j, k}c_{j, k}C_{j, k}^{\dag}$
 for some $c_{j, k} \in \mathbb{Z}_{\ge 0}$. 
By (\ref{form:psi}), 
the result for the $(-1, -1)$-flopping contractions shows that
\begin{align*}
&-\frac{1}{12}c_{j, k}(\widehat{\sS}_t \cdot C_{j, k})+ \frac{1}{2}
\chi(\fF_{t, j, k})=-\frac{1}{12}(\widehat{\sS}_t \cdot C_{j, k}) \\
&-\frac{1}{2}c_{j, k} C_{j, k}^{\dag} + 
[\fF_{t, j, k}] = -\frac{1}{2}(\widehat{\sS}_t \cdot C_{j, k})C_{j, k}^{\dag}. 
\end{align*}
Note that $c=\sum_{j, k}jc_{j, k}$, 
$[F]=\sum_{j, k}[\fF_{t, j, k}]$, $[C_{j, k}]=j[C]$
 and $[C_{j, k}^{\dag}]=j[C^{\dag}]$. 
By taking the sum for all $k, j$, we obtain the desired formula for 
$\psi_0, \psi_1$. 
\end{proof}

\subsection{Tilting via slope stability conditions}\label{subsec:Tilt}
Let $X$ be a smooth projective 3-fold and 
$f \colon X \to Y$ a flopping contraction.
Let $\lL_Y$ be an ample line bundle on $Y$
with first Chern class $\omega$. 
We consider the $f^{\ast}\omega$-slope stability conditions 
on $\Coh_{\le 2}(X)$ and $\pPPer_{\le 2}(X/Y)$ 
based on the slope stability conditions
for torsion sheaves 
in~\cite[Definition~1.6.8]{Hu} which use
$\hat{\mu}$ for the notation of slope. 
Namely 
for 
$E \in D^b \Coh_{\le 2}(X)$, 
its Hilbert polynomial is written as
\begin{align*}
\chi(E\otimes f^{\ast}\lL_Y^{\otimes m})=
\alpha_{2, f^{\ast}\omega}(E)m^2/2 + 
\alpha_{1, f^{\ast}\omega}(E) m + \alpha_{0, f^{\ast}\omega}(E)
\end{align*}
with $\alpha_{i, f^{\ast}\omega}(E) \in \mathbb{Q}$. 
Moreover $\alpha_{2, f^{\ast}\omega}(E)$ is a positive 
integer if $E$ is a two dimensional sheaf outside a 
codimension two subset in $X$. 
We set
\begin{align*}
\hat{\mu}_{f^{\ast}\omega}(E) 
&\cneq \frac{\alpha_{1, f^{\ast}\omega}(E)}{\alpha_{2, f^{\ast}\omega}(E)} \in 
\mathbb{Q} \cup \{ \infty\}.
\end{align*}
Here we set $\hat{\mu}_{f^{\ast}\omega}(E)=\infty$
if the denominator is zero. 
By the Riemann-Roch theorem, 
$\hat{\mu}_{f^{\ast}\omega}(E)$ is written as
\begin{align}\label{muRR}
\hat{\mu}_{f^{\ast}\omega}(E)=
\frac{(\ch_2(E)+ c_1(X) \ch_1(E)/2)f^{\ast}\omega}{\ch_1(E) f^{\ast}\omega^2}. 
\end{align}
The slope function $\hat{\mu}_{f^{\ast}\omega}$
satisfies the week see-saw property on $\Coh_{\le 2}(X)$
and $\pPPer_{\le 2}(X/Y)$, i.e. 
if there is an exact sequence
$0 \to F \to E \to G \to 0$
in $\Coh_{\le 2}(X)$ or $\pPPer_{\le 2}(X/Y)$, 
we have either
\begin{align*}
&\hat{\mu}_{f^{\ast}\omega}(F) \ge \hat{\mu}_{f^{\ast}\omega}(E) \ge \hat{\mu}_{f^{\ast}\omega}(G) \mbox{ or } \\
&\hat{\mu}_{f^{\ast}\omega}(F) \le \hat{\mu}_{f^{\ast}\omega}(E) \le \hat{\mu}_{f^{\ast}\omega}(G). 
\end{align*}
Hence the slope function 
$\hat{\mu}_{f^{\ast}\omega}$ 
defines weak stability conditions on 
$\Coh_{\le 2}(X)$ and $\pPPer_{\le 2}(X/Y)$:
\begin{defi}\label{defi:mu}
An object $E \in \Coh_{\le 2}(X)$ (resp.~$\pPPer_{\le 2}(X/Y)$)
is $\hat{\mu}_{f^{\ast}\omega}$-(semi)stable if 
for any exact sequence $0 \to F \to E \to G \to 0$ 
in $\Coh_{\le 2}(X)$
(resp.~$\pPPer_{\le 2}(X/Y)$), we have the 
inequality
\begin{align*}
\hat{\mu}_{f^{\ast}\omega}(F)<(\le) \hat{\mu}_{f^{\ast}\omega}(G). 
\end{align*}
\end{defi}
\begin{rmk}
If $E \in \Coh_{\le 2}(X)$ is scheme theoretically 
supported on a smooth 
surface $S \subset X$ with 
$f^{\ast}\omega|_{S}$ ample, then 
$E$ is $\hat{\mu}_{f^{\ast}\omega}$-(semi)stable 
if and only if it is a 
torsion free $f^{\ast}\omega|_{S}$-slope 
(semi)stable sheaf on $S$ in the classical sense. 
\end{rmk}
\begin{rmk}
In general $f^{\ast}\omega$ is not ample on a support of 
a two dimensional sheaf, so we need to 
take a little care in dealing with some properties
of $\hat{\mu}_{f^{\ast}\omega}$-stability. 
The existence of Harder-Narasimhan 
filtrations follows from a standard argument (say, using the same argument of~\cite[Lemma~3.6]{Todext}). 
The boundedness of $\hat{\mu}_{f^{\ast}\omega}$-semistable 
objects will follow from Lemma~\ref{EinCoh}
and Proposition~\ref{prop:bound} below. 
\end{rmk}

For $\mu \in \mathbb{Q}$, let
$(\pT_{f^{\ast}\omega}^{\mu}, \pF^{\mu}_{f^{\ast}\omega})$
be the pair of subcategories in $\pPPer_{\le 2}(X/Y)$ given 
as follows: 
\begin{align*}
\pT_{f^{\ast}\omega}^{\mu}&\cneq \langle E \in \pPPer_{\le 2}(X/Y) : 
E \mbox{ is } \hat{\mu}_{f^{\ast}\omega} \mbox{-semistable with } 
\hat{\mu}_{f^{\ast}\omega}(E)>\mu \rangle_{\rm{ex}} \\
\pF_{f^{\ast}\omega}^{\mu}&\cneq \langle E \in \pPPer_{\le 2}(X/Y) : 
E \mbox{ is } \hat{\mu}_{f^{\ast}\omega} \mbox{-semistable with } 
\hat{\mu}_{f^{\ast}\omega}(E)\le \mu \rangle_{\rm{ex}}. 
\end{align*}
By the existence of Harder-Narasimhan filtrations in $\pPPer_{\le 2}(X/Y)$, 
the pair of subcategories $(\pT_{f^{\ast}\omega}^{\mu}, \pF_{f^{\ast}\omega}^{\mu})$
forms a torsion pair on $\pPPer_{\le 2}(X/Y)$. 
The associated (shifted) tilting is given by 
\begin{align*}
\pA^{\mu}_{f^{\ast}\omega} \cneq \langle \pF_{f^{\ast}\omega}^{\mu}, \pT_{f^{\ast}\omega}^{\mu}[-1] 
\rangle_{\rm{ex}} \subset D^b \Coh_{\le 2}(X). 
\end{align*}
By a general theory of tilting, 
$\pA^{\mu}_{f^{\ast}\omega}$ is the heart of a bounded t-structure on 
$D^b \Coh_{\le 2}(X)$. 
By the construction, any object
$E \in \pA^{\mu}_{f^{\ast}\omega}$ satisfies
the inequality
\begin{align*}
\alpha_{1, f^{\ast}\omega}(E)
-\mu \cdot \alpha_{2, f^{\ast}\omega}(E) \le 0.
\end{align*}
Hence the category
\begin{align*}
\pB^{\mu}_{f^{\ast}\omega} \cneq \{ E \in \pA^{\mu}_{f^{\ast}\omega} :
\alpha_{1, f^{\ast}\omega}(E)
-\mu \cdot \alpha_{2, f^{\ast}\omega}(E) = 0
 \} 
\end{align*}
is an abelian subcategory of $\pA^{\mu}_{f^{\ast}\omega}$. 
Note that $\pB^{\mu}_{f^{\ast}\omega}$ is written as 
\begin{align*}
\pB^{\mu}_{f^{\ast}\omega}= \left\langle F, \pPPer_0(X/Y)[-1] :
\begin{array}{c}
F\in \pPPer_{\le 2}(X/Y) 
\mbox{ is } \hat{\mu}_{f^{\ast}\omega} \mbox{-semistable } \\
\mbox{  with } \hat{\mu}_{f^{\ast}\omega}(F)=\mu \end{array}
\right\rangle_{\rm{ex}}. 
\end{align*}
Let $\Phi$ be the equivalence given by (\ref{Deq}). 
We have the following lemma: 
\begin{lem}\label{Phires}
The equivalence $\Phi$ restricts to an equivalence
$\Phi \colon \oB_{f^{\ast} \omega}^{\mu} \stackrel{\sim}{\to}
\iB_{f^{\dag \ast} \omega}^{\mu}$.  
\end{lem}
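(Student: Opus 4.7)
The plan is to show that $\Phi$ preserves the Hilbert polynomial invariants $\alpha_{i, f^{\ast}\omega}$, so that the $\hat{\mu}_{f^{\ast}\omega}$-slope and the defining linear equation of $\pB^{\mu}$ are both preserved, and then invoke the exactness of $\Phi$ between the two perverse hearts.

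First, since $\Phi$ is the Fourier-Mukai transform with kernel $\oO_{X\times_Y X^{\dag}}$, and $f^{\ast}\lL_Y^{\otimes m}$ and $f^{\dag\ast}\lL_Y^{\otimes m}$ are mutually restrictions of the pullback of $\lL_Y^{\otimes m}$ from $Y$, the projection formula gives
\begin{align*}
\Phi(E\otimes f^{\ast}\lL_Y^{\otimes m})\cong \Phi(E)\otimes f^{\dag\ast}\lL_Y^{\otimes m}
\end{align*}
for every $E\in D^b\Coh(X)$. Taking Euler characteristics and using that $\Phi$ is an equivalence yields
\begin{align*}
\alpha_{i, f^{\ast}\omega}(E)=\alpha_{i, f^{\dag\ast}\omega}(\Phi(E)), \quad i=0,1,2,
\end{align*}
so in particular $\hat{\mu}_{f^{\ast}\omega}(E)=\hat{\mu}_{f^{\dag\ast}\omega}(\Phi(E))$, and the linear relation $\alpha_{1}-\mu\alpha_{2}=0$ used to cut out $\pB^{\mu}$ is preserved.

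Next I would use that $\Phi$ restricts to an exact equivalence of abelian categories $\Phi\colon \oPPer_{\le 2}(X/Y)\stackrel{\sim}{\to}\iPPer_{\le 2}(X^{\dag}/Y)$, as recalled after (\ref{Deq}). Exactness and the slope identity above immediately imply that an object $F\in \oPPer_{\le 2}(X/Y)$ is $\hat{\mu}_{f^{\ast}\omega}$-semistable of slope $\mu$ if and only if $\Phi(F)\in \iPPer_{\le 2}(X^{\dag}/Y)$ is $\hat{\mu}_{f^{\dag\ast}\omega}$-semistable of slope $\mu$. Consequently $\Phi$ takes the torsion pair $(\oT^{\mu}_{f^{\ast}\omega},\oF^{\mu}_{f^{\ast}\omega})$ on $\oPPer_{\le 2}(X/Y)$ to the torsion pair $(\iT^{\mu}_{f^{\dag\ast}\omega},\iF^{\mu}_{f^{\dag\ast}\omega})$ on $\iPPer_{\le 2}(X^{\dag}/Y)$, and hence takes the tilted heart $\oA^{\mu}_{f^{\ast}\omega}$ to $\iA^{\mu}_{f^{\dag\ast}\omega}$.

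Finally, restricting this equivalence of hearts to the subcategory cut out by the equation $\alpha_{1}-\mu\alpha_{2}=0$, which we have already shown to be preserved, yields the desired equivalence $\Phi\colon \oB^{\mu}_{f^{\ast}\omega}\stackrel{\sim}{\to}\iB^{\mu}_{f^{\dag\ast}\omega}$. The only point that requires genuine verification is the compatibility $\Phi(E\otimes f^{\ast}\lL_Y^{\otimes m})\cong \Phi(E)\otimes f^{\dag\ast}\lL_Y^{\otimes m}$; everything else is formal. I expect this step to be the main obstacle only in the sense that it pins down why the argument works, but it is a direct consequence of the Fourier-Mukai description of $\Phi$ and the projection formula.
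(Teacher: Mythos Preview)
Your argument is correct and takes a slightly different route from the paper. The paper obtains $\hat{\mu}_{f^{\ast}\omega}(E)=\hat{\mu}_{f^{\dag\ast}\omega}(\Phi(E))$ by combining the cohomological description of $\Phi_{\ast}$ in Lemma~\ref{lem:linear} (which shows that $\Phi_{\ast}$ alters the $H^4$-component only by a class in $H_2(X^{\dag}/Y)$, hence pairing trivially with $f^{\dag\ast}\omega$) with the Riemann--Roch expression (\ref{muRR}) for the slope, and then declares the rest obvious. You instead use the projection formula for the Fourier--Mukai kernel $\oO_{X\times_Y X^{\dag}}$ to show directly that the whole Hilbert polynomial with respect to $f^{\ast}\lL_Y$ is preserved, and then spell out the passage through the torsion pair and the tilting. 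Your route is more self-contained for this lemma and actually gives the stronger fact that all three coefficients $\alpha_i$ agree, not only their ratio; the paper's route ties into its global use of the map $\Phi_{\ast}$ on cohomology.

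One small point deserves a citation: the step ``taking Euler characteristics and using that $\Phi$ is an equivalence'' is not automatic, since an arbitrary derived equivalence need not preserve $\chi(-)=\chi(\oO_X,-)$ (a shift already negates it). What makes it work here is $\Phi(\oO_X)\cong\oO_{X^{\dag}}$, i.e.\ Lemma~\ref{lem:OtoO}, which gives
\[
\chi(E\otimes f^{\ast}\lL_Y^{\otimes m})=\chi(\oO_X,E\otimes f^{\ast}\lL_Y^{\otimes m})=\chi(\oO_{X^{\dag}},\Phi(E)\otimes f^{\dag\ast}\lL_Y^{\otimes m}).
\]
With that reference added, your proof is complete.
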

\begin{proof}
By Lemma~\ref{lem:linear} and (\ref{muRR}), we have 
$\hat{\mu}_{f^{\ast} \omega}(E)= \hat{\mu}_{f^{\dag \ast}\omega}(\Phi(E))$
for any object $E \in \oPPer_{\le 2}(X/Y)$. 
Therefore the result is obvious. 
\end{proof}

\subsection{Some properties of $\pB_{f^{\ast}\omega}^{\mu}$}
This subsection is devoted to showing 
some properties of the abelian category $\pB_{f^{\ast}\omega}^{\mu}$. 
We first prove that
it contains any $\hat{\mu}_{f^{\ast}\omega}$-semistable sheaf
with slope $\mu$.  
We define the category $\cC_{f^{\ast}\omega}^{\mu}$ to be
\begin{align}\label{C2}
\cC_{f^{\ast}\omega}^{\mu}
\cneq 
\{ E \in \Coh_{\le 2}(X) : E \mbox{ is } \hat{\mu}_{f^{\ast}\omega} \mbox{-semistable with } \hat{\mu}_{f^{\ast}\omega}(E)=\mu\}. 
\end{align}
We have the following lemma: 
\begin{lem}\label{EinCoh}
We have 
$\cC_{f^{\ast}\omega}^{\mu} \subset
\pB_{f^{\ast}\omega}^{\mu}$. 
\end{lem}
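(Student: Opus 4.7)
The plan is to take any $E \in \cC_{f^{\ast}\omega}^{\mu}$ and exhibit it as an extension lying in the generating set of $\pB_{f^{\ast}\omega}^{\mu}$. First, I would verify that $E$ is pure of dimension two: any non-zero subsheaf $H \subset E$ of dimension $\le 1$ would have $\alpha_{2, f^{\ast}\omega}(H) = 0$, forcing $\hat{\mu}_{f^{\ast}\omega}(H) = +\infty$ and contradicting $\hat{\mu}_{f^{\ast}\omega}$-semistability of $E$.

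The core idea is that the hearts $\Coh_{\le 2}(X)$ and $\pPPer_{\le 2}(X/Y)$ of $D^b\Coh_{\le 2}(X)$ are related by a single tilt along a torsion pair on $\Coh_{\le 2}(X)$, whose torsion-free part
\begin{align*}
\nN \cneq \pPPer_{\le 2}(X/Y)[-1] \cap \Coh_{\le 2}(X)
\end{align*}
consists of sheaves $H$ with $\dR f_{\ast}H = 0$, necessarily supported on the exceptional locus $C$ in dimension $\le 1$. Since $f(C)$ is a point, $f^{\ast}\omega \cdot [C] = 0$, so the Riemann-Roch formula (\ref{muRR}) gives $\alpha_{1, f^{\ast}\omega}(H) = \alpha_{2, f^{\ast}\omega}(H) = 0$ for every $H \in \nN$. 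Applying the torsion pair to $E$ produces a short exact sequence $0 \to T \to E \to F \to 0$ in $\Coh_{\le 2}(X)$ with $T \in \pPPer_{\le 2}(X/Y) \cap \Coh_{\le 2}(X)$ and $F \in \nN$. The vanishing of $\alpha_{1}$ and $\alpha_{2}$ on $\nN$ gives $\hat{\mu}_{f^{\ast}\omega}(T) = \mu$, and since $F[1] \in \pPPer_{\le 2}(X/Y)$ is one-dimensional we obtain $F \in \pPPer_0(X/Y)[-1]$.

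The principal obstacle is verifying that $T$ is $\hat{\mu}_{f^{\ast}\omega}$-semistable inside $\pPPer_{\le 2}(X/Y)$. Given a short exact sequence $0 \to T' \to T \to T'' \to 0$ in $\pPPer_{\le 2}(X/Y)$, I would pass to the long exact sequence of standard cohomologies: using $\hH^{-1}(T)=0$ together with the fact that objects of $\pPPer_{\le 2}(X/Y)$ have standard cohomology concentrated in degrees $-1,0$ with the $(-1)$-part lying in $\nN$, one deduces $\hH^{-1}(T')=0$ and an exact sequence
\begin{align*}
0 \to \hH^{-1}(T'') \to T' \to S \to 0, \quad S \cneq \imm(T' \to T) \subset T,
\end{align*}
in $\Coh_{\le 2}(X)$ with $\hH^{-1}(T'') \in \nN$. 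The vanishing of $\alpha_{1, f^{\ast}\omega}$ and $\alpha_{2, f^{\ast}\omega}$ on $\nN$ yields $\hat{\mu}_{f^{\ast}\omega}(T') = \hat{\mu}_{f^{\ast}\omega}(S)$, and the inclusion $S \subset T \subset E$ together with $\hat{\mu}_{f^{\ast}\omega}$-semistability of $E$ in $\Coh_{\le 2}(X)$ forces $\hat{\mu}_{f^{\ast}\omega}(T') \le \mu$.

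The triangle $T \to E \to F$ then realizes $E$ as an extension of $F \in \pPPer_0(X/Y)[-1]$ by a $\hat{\mu}_{f^{\ast}\omega}$-semistable object $T \in \pPPer_{\le 2}(X/Y)$ of slope $\mu$, so $E$ lies in the extension-closure description of $\pB_{f^{\ast}\omega}^{\mu}$, completing the argument.
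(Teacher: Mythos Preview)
Your proof is correct and follows essentially the same strategy as the paper: both decompose $E$ via the perverse t-structure as $\hH_p^0(E) \to E \to \hH_p^1(E)[-1]$ (your $T \to E \to F$), observe that the second piece lies in $\pPPer_0(X/Y)[-1]$, and then verify that the first piece is $\hat{\mu}_{f^{\ast}\omega}$-semistable in $\pPPer_{\le 2}(X/Y)$. One small imprecision: the torsion-free part $\nN = \pPPer_{\le 2}(X/Y)[-1]\cap \Coh_{\le 2}(X)$ is not the category $\{H : \dR f_{\ast}H = 0\}$ (that is the smaller category $\cC$); one only gets $f_{\ast}H = 0$, but this already forces $H$ to be supported on $C$, so your vanishing of $\alpha_{1,f^{\ast}\omega}$ and $\alpha_{2,f^{\ast}\omega}$ is unaffected.

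Where your argument differs slightly is in the semistability check for $T$. The paper proceeds by showing $\Hom(F,\hH_p^0(E))=0$ for all $F\in\pPPer_{\le 1}(X/Y)$, using that $E$ is pure two-dimensional; you instead take an arbitrary subobject $T'\subset T$ in $\pPPer_{\le 2}(X/Y)$, pass to the sheaf-theoretic image $S\subset T\subset E$, and use that $\nN$ is invisible to the slope together with semistability of $E$ in $\Coh_{\le 2}(X)$. Your route is more explicit and self-contained; the paper's is terser but relies on the reader supplying the passage from ``no $\pPPer_{\le 1}$-subobjects'' to full semistability, which amounts to your argument. You might add a word that $S\neq 0$ whenever $T'\neq 0$ (otherwise $T'\in \nN\cap\pPPer_{\le 2}(X/Y)=0$), so that the slope comparison is meaningful.
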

\begin{proof}
Let us take an object $E \in \cC_{f^{\ast}\omega}^{\mu}$. 
Since $\pPPer(X/Y)$ is a tilting of $\Coh(X)$
by~\cite{MVB}, we have the distinguished triangle
\begin{align}\label{DisTri}
\hH_{p}^0(E) \to E \to \hH_{p}^1(E)[-1]
\end{align}
where $\hH_p^i(E) \in \pPPer_{\le 2}(X/Y)$ is the 
$i$-th cohomology of $E$ with respect to the 
t-structure on $D^b \Coh_{\le 2}(X)$ with 
heart $\pPPer_{\le 2}(X/Y)$. 
Applying $\dR f_{\ast}$, we obtain the 
distinguished triangle in $D^b \Coh_{\le 2}(Y)$
\begin{align*}
\dR f_{\ast} \hH_p^0(E) \to \dR f_{\ast} E 
\to \dR f_{\ast} \hH_p^1(E)[-1]. 
\end{align*}
Since $\dR f_{\ast}$ takes 
$\pPPer_{\le 2}(X/Y)$ to $\Coh_{\le 2}(Y)$, 
we have $\dR f_{\ast} \hH_p^1(E) \cong R^1 f_{\ast}E$, 
and it is a zero dimensional sheaf. Hence
$\hH_p^1(E) \in \pPPer_0(X/Y)$, 
and it remains to show that $\hH_p^0(E)$ is a
$\hat{\mu}_{f^{\ast}\omega}$-semistable object in 
$\pPPer_{\le 2}(X/Y)$. 
It is enough to check that 
$\Hom(F, \hH_p^0(E))=0$ for any 
$F \in \pPPer_{\le 1}(X/Y)$. 
This follows from 
(\ref{DisTri}) and the distinguished triangle
\begin{align*}
\hH^{-1}(F)[1] \to F \to \hH^0(F)
\end{align*}
with $\hH^i(F) \in \Coh_{\le 1}(X)$, 
together with the fact that $E$ is a pure two dimensional sheaf. 
\end{proof}
We next show that 
any object in $\pB_{f^{\ast}\omega}^{\mu}$
admits a certain 
filtration, which plays an important role in the proof of 
Theorem~\ref{thm:intro}.
Let $(\pT, \pF)$ be the torsion pair 
of $\Coh_0(X/Y)$ as in (\ref{pTF}). 
We have the following proposition: 
\begin{prop}\label{prop:filt}
For any $E \in \pB_{f^{\ast}\omega}^{\mu}$, 
there exists a filtration
\begin{align}\label{filt:B}
0=E_0 \subset E_1 \subset E_2 \subset E_3=E
\end{align}
such that $F_i=E_{i}/E_{i-1}$ satisfy
$F_1 \in \pF$, $F_2 \in \cC_{f^{\ast}\omega}^{\mu}$
and $F_3 \in \pT[-1]$. 
\end{prop}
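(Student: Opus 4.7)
The filtration will be constructed in two stages, reflecting the two tilts built into the definition of $\pB^\mu_{f^\ast\omega}$: first, extract the top $\pT[-1]$-quotient using the HRS tilt structure of $\pA^\mu_{f^\ast\omega}$ together with the decomposition (\ref{Per0}); second, peel off the bottom $\pF$-subobject as the maximal $1$-dimensional subsheaf.

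Since $\pA^\mu_{f^\ast\omega}$ is by construction the HRS tilt of $\pPPer_{\le 2}(X/Y)$ by $(\pT^\mu_{f^\ast\omega}, \pF^\mu_{f^\ast\omega})$, any $E \in \pB^\mu_{f^\ast\omega}$ fits into a short exact sequence in $\pA^\mu_{f^\ast\omega}$,
\[
0 \to F_E \to E \to T_E[-1] \to 0,
\]
with $F_E \in \pF^\mu_{f^\ast\omega}$ and $T_E \in \pT^\mu_{f^\ast\omega}$. The linear form $\alpha_{1,f^\ast\omega} - \mu\,\alpha_{2,f^\ast\omega}$ is $\le 0$ on $\pF^\mu_{f^\ast\omega}$, is $\ge 0$ on $\pT^\mu_{f^\ast\omega}$ (with equality forcing objects into $\pPPer_0(X/Y)$, where $\alpha_1 = \alpha_2 = 0$), vanishes on $E$ by the definition of $\pB^\mu_{f^\ast\omega}$, and is additive along the ses with a sign flip from $[-1]$. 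So both $(\alpha_1 - \mu\alpha_2)(F_E)$ and $(\alpha_1 - \mu\alpha_2)(T_E)$ vanish, yielding $T_E \in \pPPer_0(X/Y)$ and forcing every $\hat\mu_{f^\ast\omega}$-semistable constituent of $F_E$ to have slope exactly $\mu$.

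Applying (\ref{Per0}) to $T_E$ via the torsion pair $(\pF[1], \pT)$ on $\pPPer_0(X/Y)$ yields a ses $0 \to \pF_T[1] \to T_E \to \pT_T \to 0$ in $\pPPer_0(X/Y)$ with $\pF_T \in \pF$, $\pT_T \in \pT$, shifting to $0 \to \pF_T \to T_E[-1] \to \pT_T[-1] \to 0$. Set $E_2 := \ker(E \twoheadrightarrow \pT_T[-1])$; then $F_3 := E/E_2 \cong \pT_T[-1] \in \pT[-1]$ and $E_2$ fits into $0 \to F_E \to E_2 \to \pF_T \to 0$. A cohomological argument in the spirit of Lemma~\ref{EinCoh}, using the distinguished triangle (\ref{DisTri}) and applying $\dR f_\ast$, shows that each $\hat\mu_{f^\ast\omega}$-semistable slope-$\mu$ constituent of $F_E$ has vanishing $\hH_p^1$ and is therefore an honest coherent sheaf; hence $F_E \in \Coh_{\le 2}(X)$. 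Its maximal $1$-dimensional subsheaf $F_E'$ lies in $\pF$ (by $\hat\mu_{f^\ast\omega}$-semistability and the slope description of $\pF$) with pure $2$-dimensional quotient $F_E'' := F_E/F_E' \in \cC^\mu_{f^\ast\omega}$, and the long exact cohomology sequence for $0 \to F_E \to E_2 \to \pF_T \to 0$ then gives $E_2 \in \Coh_{\le 2}(X)$.

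Define $E_1 \subset E_2$ to be the maximal $1$-dimensional subsheaf. The main obstacle is to verify (i) $E_1 \in \pF$ and (ii) $E_2/E_1 \in \cC^\mu_{f^\ast\omega}$. For (i), since $F_E'$ is a $1$-dimensional subsheaf of $E_2$ we have $F_E' \subset E_1$, and the maximality of $F_E'$ inside $F_E$ forces $E_1 \cap F_E = F_E'$; hence $E_1/F_E' \hookrightarrow E_2/F_E \cong \pF_T$, and by sub-closedness of $\pF \subset \Coh_0(X/Y)$ (a direct consequence of the $\mu_H$-semistability description in Lemma~\ref{desc:F}), $E_1/F_E' \in \pF$, so $E_1 \in \pF$ by extension-closedness. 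For (ii), $E_2/E_1$ is pure $2$-dimensional by the maximality of $E_1$; any $\hat\mu_{f^\ast\omega}$-destabilizing subsheaf of $E_2/E_1$ would lift to a subsheaf of $E_2$, contradicting either the slope-$\mu$ semistability of the non-$\pPPer_0$ constituents of $E_2 \in \pB^\mu_{f^\ast\omega}$ or the maximality of $E_1$. Hence $E_2/E_1 \in \cC^\mu_{f^\ast\omega}$, completing the filtration.
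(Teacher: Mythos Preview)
Your overall strategy matches the paper's: obtain the $\pT[-1]$-quotient from the tilt structure and (\ref{Per0}), realize $E_2$ as a sheaf, and take $E_1$ to be its maximal one-dimensional subsheaf. The paper does exactly this.

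The genuine difference lies in how you show $E_1\in\pF$. The paper argues directly via the equivalence $\pPhi$ of (\ref{PhiA}): applying $\pPhi$ to $0\to F\to E_2\to F'\to 0$ gives $\hH^0\pPhi(E_2)=\pPhi(F)$, and the $\hat\mu_{f^\ast\omega}$-semistability of $F$ in $\pPPer_{\le 2}(X/Y)$ forces $\pPhi(F)$ to be \emph{pure two-dimensional} as an $\pAA_Y$-module. Since $E_1$ is one-dimensional, $\hH^0\pPhi(E_1)\hookrightarrow\pPhi(F)$ is a $\le 1$-dimensional subsheaf of a pure two-dimensional one, hence zero; so $\pPhi(E_1)\in\Coh_0(\pAA_Y)[-1]$, i.e.\ $E_1\in\pF$.

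Your route instead extracts $F_E'$, the maximal one-dimensional subsheaf of $F_E$, and builds $E_1$ as an extension of $F_E'$ by a subobject of $\pF_T\in\pF$. The extension and sub-closedness steps are fine. The gap is the assertion ``$F_E'\in\pF$ by $\hat\mu_{f^\ast\omega}$-semistability and the slope description of $\pF$'': Lemma~\ref{desc:F} is a description of $\pF$ \emph{inside} $\Coh_0(X/Y)$, so it cannot be invoked until you know $F_E'\in\Coh_0(X/Y)$, and nothing you have written forces the one-dimensional torsion of $F_E$ to be supported on $\Ex(f)$. What actually pins this down is the paper's mechanism (or its $\pPPer$-cohomology rephrasing): $\hH^0_p(F_E')$ injects into $F_E$ in $\pPPer_{\le 2}(X/Y)$ with $\alpha_{2,f^\ast\omega}=0$, hence has slope $\infty$, so vanishes by semistability of $F_E$; thus $F_E'\in\pPPer_0(X/Y)[-1]\cap\Coh(X)=\pF$. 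Once you plug that in, your argument goes through---but then the detour via $F_E'$ is unnecessary, since the same reasoning applied to $E_1$ itself (using $\hH^0_p(E_1)\hookrightarrow\hH^0_p(E_2)=F$) gives $E_1\in\pF$ in one step, which is exactly what the paper does.
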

\begin{proof}
By (\ref{Per0}), we
 may assume that $E \notin \oPPer_0(X/Y)[-1]$. 
We have the exact sequence in $\pB_{f^{\ast}\omega}^{\mu}$
\begin{align*}
0 \to F \to E \to T[-1] \to 0
\end{align*}
such that $F \in \pPPer_{\le 2}(X/Y)$
is $\hat{\mu}_{f^{\ast}\omega}$-semistable with 
$\hat{\mu}_{f^{\ast}\omega}(F)=\mu$
and $T \in \pPPer_0(X/Y)$. 
By (\ref{Per0}), we also have
the exact sequence in $\pB_{f^{\ast}\omega}^{\mu}$
\begin{align*}
0 \to F' \to T[-1] \to T'[-1] \to 0
\end{align*}
with $F' \in \pF$ and $T'[-1] \in \pT[-1]$. 
Combining the above two exact sequences, 
we have the subobject $E_2 \subset E$ in $\pB_{f^{\ast}\omega}^{\mu}$
with $E/E_2 \in \pT[-1]$ which fits into the exact
sequence 
\begin{align}\label{FEF}
0 \to F \to E_2 \to F' \to 0
\end{align}
in $\pB_{f^{\ast}\omega}^{\mu}$. 
Note that the $\hat{\mu}_{f^{\ast}\omega}$-semistability of 
$F$ implies that $F \in \Coh_{\le 2}(X)$, hence
$E_2 \in \Coh_{\le 2}(X)$. 
We set $E_1 \subset E_2$ to be the maximal 
one dimensional subsheaf of $E_2$. 
Note that $E_2/E_1$ is a pure two dimensional sheaf, 
hence it is an object in $\cC_{f^{\ast}\omega}^{\mu}$. 
It is enough to show that $E_1 \in \pF$. 
Applying $\pPhi$
in (\ref{PhiA}) to (\ref{FEF}), 
we obtain the distinguished triangle in $D^b \Coh (\pAA_Y)$
\begin{align*}
\pPhi(F) \to \pPhi(E_2) \to \pPhi(F'). 
\end{align*}
Here $\pPhi(F) \in \Coh_{\le 2}(\pAA_Y)$
and $\pPhi(F') \in \Coh_0(\pAA_Y)[-1]$. 
In particular, we have
$\hH^0(\pPhi(E_2))=\pPhi(F)$, 
which is 
pure two dimensional by the $\hat{\mu}_{f^{\ast}\omega}$-stability of $F$. 
Because there is an injection 
$\hH^0 \pPhi(E_1) \hookrightarrow \hH^0 \pPhi(E_2)$
in $\Coh(\pAA_Y)$, we have 
$\hH^0 \pPhi(E_1)=0$, hence
$\pPhi(E_1) \in \Coh_0(\pAA_Y)[-1]$. 
This implies that $E_1 \in \pF$. 
\end{proof}
The filtration in the above proposition may be 
interpreted as 
a Harder-Narasimhan filtration with respect to a certain 
weak stability condition on $\pB_{f^{\ast}\omega}^{\mu}$ 
in~\cite{Tcurve1}. Indeed, we have the following lemma: 
\begin{lem}\label{lem:C}
If we set $\cC_1=\pF$, $\cC_2=\cC_{f^{\ast}\omega}^{\mu}$
and $\cC_3=\pT[-1]$, we have 
$\Hom(\cC_i, \cC_j)=0$
for $i<j$. In particular, the filtration 
(\ref{filt:B}) is unique up to an isomorphism. 
\end{lem}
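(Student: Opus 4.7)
The plan is to verify the three required Hom-vanishings separately, with two of them being immediate from degree considerations and the third a purity argument, and then deduce uniqueness from these vanishings via a standard filtration-comparison argument.

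First I would handle the two easy cases. For the pair $(\cC_1,\cC_3)$, take $F\in \pF$ and $T\in \pT$; since $\pF,\pT\subset \Coh(X)$, the group $\Hom_{D^b}(F,T[-1])=\Ext^{-1}(F,T)$ vanishes for degree reasons. Exactly the same reasoning gives $\Hom(\cC_2,\cC_3)=0$: an object of $\cC_{f^{\ast}\omega}^{\mu}$ is an honest sheaf in $\Coh_{\le 2}(X)$, so it has no negative $\Ext$'s into a sheaf. Thus only the pair $(\cC_1,\cC_2)$ requires real content.

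For $\Hom(\pF,\cC_{f^{\ast}\omega}^{\mu})=0$, note that any $F\in \pF$ lies in $\Coh_0(X/Y)\subset \Coh_{\le 1}(X)$, while an object $E\in \cC_{f^{\ast}\omega}^{\mu}$ is a pure two-dimensional sheaf (this purity is built into the definition of $\hat{\mu}_{f^{\ast}\omega}$-semistability for two-dimensional sheaves, as only pure two-dimensional subsheaves participate in the slope test). Given any morphism $F\to E$ in $\Coh(X)$, its image is simultaneously a quotient of $F$, hence at most one-dimensional, and a subsheaf of the pure two-dimensional sheaf $E$, hence either zero or two-dimensional. Therefore the image is zero, proving the vanishing. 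The main subtlety here is justifying the purity of objects in $\cC_{f^{\ast}\omega}^{\mu}$; if it is not already explicit, one sees it directly from the slope function: any torsion subsheaf of lower dimension would have $\alpha_{2,f^{\ast}\omega}=0$ and thus slope $+\infty$, destabilizing $E$.

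Finally, for uniqueness of the filtration (\ref{filt:B}), suppose $0=E_0'\subset E_1'\subset E_2'\subset E_3'=E$ is another such filtration in $\pB_{f^{\ast}\omega}^{\mu}$ with successive quotients $F_i'\in \cC_i$. The composition
\begin{equation*}
E_1' \hookrightarrow E \twoheadrightarrow E/E_2 \cong F_3 \in \cC_3
\end{equation*}
is a morphism from $\cC_1$ to $\cC_3$, hence zero; so $E_1'\subset E_2$. Then the induced map $E_1'\to E_2\twoheadrightarrow F_2\in \cC_2$ is a morphism from $\cC_1$ to $\cC_2$, hence zero, forcing $E_1'\subset E_1$. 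Swapping the roles of the two filtrations gives the reverse inclusion, so $E_1=E_1'$. A completely analogous argument applied to the quotient $E/E_1$, whose induced two-step filtration is of type $(\cC_2,\cC_3)$, gives $E_2=E_2'$. The expected obstacle, if any, is only in confirming that the purity statement for $\cC_{f^{\ast}\omega}^{\mu}$ is clean; everything else is formal.
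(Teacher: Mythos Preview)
Your proof is correct and is precisely the verification the paper has in mind: the paper's own proof is simply ``The result is obvious from the definition of $\cC_i$,'' and your argument spells out what makes it obvious. The two degree-shift vanishings, the purity of objects in $\cC_{f^{\ast}\omega}^{\mu}$ (which you justify correctly via the slope-$\infty$ destabilizing subsheaf), and the standard filtration-comparison argument are exactly the intended content.
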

\begin{proof}
The result is obvious from the definition of 
$\cC_i$. 
\end{proof}
Finally we show the boundedness of 
the set of objects in $\pB_{f^{\ast}\omega}^{\mu}$
with a fixed Mukai vector.  
We define $\pGamma_{f^{\ast}\omega}^{\mu}$
to be 
\begin{align*}
\pGamma_{f^{\ast}\omega}^{\mu}
\cneq \Imm \left( v(\ast) \colon 
\pB_{f^{\ast}\omega}^{\mu} \to \Gamma  \right). 
\end{align*}
\begin{prop}\label{prop:bound}
For any $v\in \pGamma_{f^{\ast}\omega}^{\mu}$, the 
set of objects 
$E \in \pB^{\mu}_{f^{\ast}\omega}$ with $v(E)=v$ is bounded. 
\end{prop}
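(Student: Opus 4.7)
I will use the filtration of Proposition~\ref{prop:filt} to reduce the problem to boundedness of each piece of the filtration, then bound those individually. Concretely, for any $E \in \pB_{f^{\ast}\omega}^{\mu}$ with $v(E)=v$, Proposition~\ref{prop:filt} produces the canonical filtration $0=E_0\subset E_1\subset E_2 \subset E_3=E$ with $F_1=E_1 \in \pF$, $F_2=E_2/E_1 \in \cC_{f^{\ast}\omega}^{\mu}$ and $F_3=E/E_2 \in \pT[-1]$. Since iterated extensions of bounded families of objects form a bounded family in $D^b\Coh(X)$, it suffices to prove \emph{(a)} that only finitely many triples of Mukai vectors $(v(F_1),v(F_2),v(F_3))$ can arise from $v(E)=v$, and \emph{(b)} that each of $\pF$, $\cC_{f^{\ast}\omega}^{\mu}$, and $\pT[-1]$, with a fixed Mukai vector, is a bounded family.

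For (b): both $\pF$ and $\pT$ are full subcategories of $\Coh_0(X/Y)\subset \Coh_{\le 1}(X)$, whose objects are set-theoretically supported on the proper exceptional locus of $f$, a fixed projective subscheme of $X$ of dimension $\le 1$. Fixing the Mukai vector fixes the Chern character, hence the Hilbert polynomial with respect to an ample divisor $H_X$ on $X$, so standard Grothendieck boundedness for coherent sheaves with fixed Hilbert polynomial on a projective scheme gives boundedness (shifting by $[-1]$ is harmless). For $\cC_{f^{\ast}\omega}^{\mu}$, the objects are pure two-dimensional $\hat{\mu}_{f^{\ast}\omega}$-semistable sheaves with fixed Chern character; since $f$ is birational and $\omega$ is ample on $Y$, the class $f^{\ast}\omega$ is nef and big on $X$, and one can invoke Langer's boundedness theorem for semistable sheaves with respect to nef (or nef-and-big) polarizations.

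For (a): since $F_1$ and $F_3[1]$ lie in $\Coh_0(X/Y)\subset \Coh_{\le 1}(X)$, their codimension-one components of the Mukai vector vanish, forcing $v_1(F_2)=v_1(E)=P$. The curve-class components $v_2(F_1)$ and $v_2(F_3[1])$ lie in the finitely generated group $H_2(X/Y)$ and are effective, while Lemma~\ref{desc:F} (and its dual for $\pT$) gives sign constraints on the Euler characteristics $v_3(F_1)$ and $v_3(F_3[1])$ via the slope $\mu_H$ from (\ref{muH}). Together with the sum relations $v(F_1)+v(F_2)+v(F_3)=v$ and the fact that $v_2(F_2)\cdot f^{\ast}\omega = f_{\ast}\beta\cdot\omega$ is pinned by the slope condition $\hat{\mu}_{f^{\ast}\omega}(F_2)=\mu$, these constraints reduce the problem to bounding the curve classes $v_2(F_1)$ and $v_2(F_3[1])$ individually in the effective cone of $H_2(X/Y)$.

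The main obstacle is the last point above: the sum $v(F_1)+v(F_3)$ is pinned only up to the variation of $v(F_2)$, so a priori one pair of effective classes $(v_2(F_1),v_2(F_3[1]))$ could be arbitrarily large provided they cancel against a correspondingly large $v_2(F_2)$. To close this gap I would exploit the uniqueness of the filtration (Lemma~\ref{lem:C}): $F_1$ is the maximal $\pF$-subobject of $E$ in $\pB_{f^{\ast}\omega}^{\mu}$ and $F_3$ is the minimal $\pT[-1]$-quotient, which prevents transfer of mass from $F_2$ into $F_1$ or $F_3$; combined with a Bogomolov-type inequality for $\hat{\mu}_{f^{\ast}\omega}$-semistable pure two-dimensional sheaves with fixed $v_1(F_2)=P$ (bounding $\ch_2(F_2)$ modulo $\ker f_{\ast}$ from below), this pins $v_2(F_2)$ to a bounded set and hence $v_2(F_1), v_2(F_3[1])$ to bounded effective sets, after which the Euler-characteristic components are bounded by the sign constraints on $v_3(F_1)$ and $v_3(F_3[1])$.
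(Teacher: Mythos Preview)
Your reduction via the three-step filtration of Proposition~\ref{prop:filt} is natural, and part~(b) for $\pF$ and $\pT$ is fine (these are supported on the fixed exceptional locus, hence bounded once the numerical class is fixed). The genuine gap is in part~(a), precisely at the point you yourself flag as the ``main obstacle'': you need to bound the $H_2(X/Y)$-component of $v_2(F_2)$, but the Bogomolov-type inequality you invoke does not do this. The polarization $f^{\ast}\omega$ is trivial on $H_2(X/Y)$, so any Bogomolov inequality for $\hat{\mu}_{f^{\ast}\omega}$-semistable sheaves gives no information about the coefficient of $[C]$ in $\ch_2(F_2)$. Switching to an ample $H_X$ on $X$ does not help either, because $F_2$ is not $H_X$-semistable in general and you have no control on $\hat{\mu}_{H_X}^{\rm{max}}(F_2)$. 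The appeal to uniqueness of the filtration (Lemma~\ref{lem:C}) only says the filtration is canonical for a \emph{given} $E$; it does not prevent $v_2(F_1)$ from being arbitrarily large as $E$ varies over all objects with $v(E)=v$. So as it stands, the argument does not close.

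The paper's proof avoids this issue entirely by transporting the problem to $Y$ via the equivalence $\pPhi$ of (\ref{PhiA}). It uses the coarser two-term decomposition $0\to F\to E\to T[-1]\to 0$ with $F\in\pPPer_{\le 2}(X/Y)$ semistable and $T\in\pPPer_0(X/Y)$, then observes that $\pPhi(F)$ is a pure two-dimensional sheaf on $Y$ and $\pPhi(T)$ is zero-dimensional. On $Y$ the polarization $\omega$ is genuinely ample, so one can bound $\hat{\mu}_{\omega}^{\rm{max}}(\pPhi(F))$ (using the explicit shape of $\oE$) and then apply Langer's theorems to bound $\alpha_{0,\omega}(\pPhi(F))$ above. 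Since $\alpha_{0,\omega}(\pPhi(T))\ge 0$ and the difference is the fixed $\alpha_{0,\omega}(\pPhi(E))$, both pieces are pinned to finitely many values. Boundedness as $\oO_Y$-modules, then as $\pAA_Y$-modules, then of $E$ itself follows. The key maneuver you are missing is this passage to $Y$: it converts the nef-but-not-ample $f^{\ast}\omega$ into the ample $\omega$, which is exactly what makes Langer's bound bite on the piece of the invariant you cannot otherwise control.
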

\begin{proof}
We write $v=(P, \beta, n)$. 
If $P=0$, we have $E \in \pPPer_0(X/Y)[-1]$ and the 
result follows since $\pPPer_0(X/Y)$ is the extension 
closure of $\oO_x$ for $x \in X \setminus \Ex(f)$
and a finite number of sheaves up to shift supported on 
$\Ex(f)$ (cf.~\cite{MVB}). 
Hence we may assume that $P$ is a non-zero class of an effective 
divisor in $X$. 
For $E \in \pB^{\mu}_{f^{\ast}\omega}$
with $v(E)=v$, there exists an exact sequence in 
$\pB^{\mu}_{f^{\ast}\omega}$
\begin{align*}
0 \to F \to E \to T[-1] \to 0
\end{align*}
such that $F$ is a
$\hat{\mu}_{f^{\ast}\omega}$-semistable
 object in 
$\pPPer_{\le 2}(X/Y)$,
and $T$ is an object 
in $\pPPer_0(X/Y)$. 
Applying the equivalence 
$\pPhi$ in (\ref{PhiA})
and forgetting 
the $\pAA_Y$-module structures, we obtain 
the distinguished triangle in 
$D^b \Coh_{\le 2}(Y)$
\begin{align}\label{tri:pPhi}
\pPhi(F) \to \pPhi(E) \to \pPhi(T)[-1]. 
\end{align}
Here $\pPhi(T)$ is a zero dimensional sheaf 
on $Y$, and $\pPhi(F)$ is a sheaf on $Y$
which is pure two dimensional by the 
$\hat{\mu}_{f^{\ast}\omega}$-semistability of
$F$. 

For $M \in \Coh_{\le 2}(Y)$, 
its Hilbert polynomial is written as 
\begin{align*}
\chi(M\otimes \lL_Y^{\otimes m})=
\alpha_{2, \omega}(M)m^2/2 + \alpha_{1, \omega}(M)m + \alpha_{0, \omega}(M)
\end{align*}
for $\alpha_{i, \omega}(M) \in \mathbb{Q}$. 
It 
defines the 
$\hat{\mu}_{\omega}$-stability on 
$\Coh_{\le 2}(Y)$ by setting $\hat{\mu}_{\omega}(M)=
\alpha_{1, \omega}(M)/\alpha_{2, \omega}(M)$ 
as in Subsection~\ref{subsec:Tilt}. 
Let
 $\hat{\mu}_{\omega}^{\rm{max}}(M)$ be the 
maximal $\hat{\mu}_{\omega}$-slope among the 
Harder-Narasimhan factors of $M\in \Coh_{\le 2}(Y)$ with respect to the
$\hat{\mu}_{\omega}$-stability. 
For a fixed $v$, we claim that the set 
\begin{align}\label{babove}
\{ \hat{\mu}_{\omega}^{\rm{max}}(\pPhi(F)) :
E \in \pB_{f^{\ast}\omega}^{\mu},  
v(E)=v\} \subset \mathbb{Q}
\end{align}
 is bounded above. 
For simplicity, we prove the claim only for the case of
$p=0$. 
The case of $p=-1$ is similarly proved. 
We need to recall a construction of the vector bundle
$\oE$ on $X$
which gives an equivalence (\ref{PhiA}). 
Let $\lL_X$ be a globally generated ample line bundle on $X$. 
By replacing $\lL_Y$ with $\lL_Y^{\otimes k}$ for $k \gg 0$, 
we may assume that there is a surjection of sheaves
$(\lL_Y^{\vee})^{\oplus m} \twoheadrightarrow R^1 f_{\ast}\lL_X^{\vee}$
for some $m>0$. Taking the adjunction, 
we obtain the exact sequence of vector bundles
\begin{align*}
0 \to \lL_X^{\vee} \to \oE' \to f^{\ast} (\lL_Y^{\vee})^{\oplus m} \to 0. 
\end{align*}
Then $\oE$ is given by $\oO_X \oplus \oE'$. 
By the above construction of $\oE$, 
an upper bound of $\hat{\mu}_{\omega}^{\rm{max}}(\pPhi(F))$
is obtained if we give upper bounds of 
$\hat{\mu}_{\omega}^{\rm{max}}(\dR f_{\ast}F)$
and $\hat{\mu}_{\omega}^{\rm{max}}(f_{\ast}(F\otimes \lL_X))$, 
where $f_{\ast}(F\otimes \lL_X) \cneq \hH^0 \dR f_{\ast}(F\otimes \lL_X)$. 

Because $F\in \oPPer_{\le 2}(X/Y)$ 
is $\hat{\mu}_{f^{\ast}\omega}$-semistable, 
$\dR f_{\ast}F$ is a
$\hat{\mu}_{\omega}$-semistable
sheaf on $Y$. 
Hence $\hat{\mu}_{\omega}^{\rm{max}}(\dR f_{\ast}F)=\hat{\mu}_{\omega}(\dR f_{\ast}F)=\hat{\mu}_{f^{\ast}\omega}(E)$
which is constant. 
As for $f_{\ast}(F\otimes \lL_X)$, let
$F' \in \Coh_{\le 2}(Y)$ be the $\hat{\mu}_{\omega}$-semistable factor of 
$f_{\ast}(F\otimes \lL_X)$
such that $\hat{\mu}_{\omega}^{\rm{max}}(f_{\ast}(F\otimes \lL_X))
=\hat{\mu}_{\omega}(F')$. 
Since $\dR f_{\ast}F$ is $\hat{\mu}_{\omega}$-semistable, we have the 
inequality
\begin{align*}
\hat{\mu}_{\omega}(F' \otimes f_{\ast}\lL_X^{-1}) \le \hat{\mu}_{\omega}(\dR f_{\ast} F)
\end{align*}
which implies that
\begin{align*}
\hat{\mu}_{\omega}(F') \le \hat{\mu}_{f^{\ast}\omega}(E) + 
\frac{[F']f_{\ast} c_1(\lL_X) \omega}{[F']\omega^2}. 
\end{align*}
Here $[F'] \in H^2(Y)$ is the fundamental class of $F'$.
Since $[F']$ has only a finite number of possibilities, 
it follows that $\hat{\mu}_{\omega}(F')$ is bounded above. 

By the upper boundedness of (\ref{babove}),  
the result of Langer~\cite[Theorem~4.4]{Langer}, ~\cite[Theorem~3.8]{Langer2}
shows that $\alpha_{0, \omega}(\pPhi(F))$ is bounded above.  
Because $\alpha_{0, \omega}(\pPhi(T))$ is non-negative
and 
\begin{align*}
\alpha_{0, \omega}(\pPhi(F))- \alpha_{0, \omega}(\pPhi(T))=
\alpha_{0, \omega}(\pPhi(E))
\end{align*} is constant,
the set
\begin{align*}
\{(\alpha_{0, \omega}(\pPhi(F)), \alpha_{0, \omega}(\pPhi(T))) :
E \in \pB_{f^{\ast}\omega}^{\mu}, v(E)=v\}
\end{align*}
is a finite set. 
Again by~\cite[Theorem~4.4]{Langer}, ~\cite[Theorem~3.8]{Langer2},
and noting that $\pPhi(T)$ is a zero dimensional 
sheaf with bounded length, 
the set of objects
\begin{align}\label{setO}
\{ \pPhi(F), \pPhi(T) : E \in \pB_{f^{\ast}\omega}^{\mu}, v(E)=v\}
\end{align}
is bounded as $\oO_Y$-modules. 
Now for $M \in \Coh_{\le 2}(Y)$, 
the set of $\pAA_Y$-module structures on $M$
is contained in the set of morphisms
$\pAA_Y \to \eE nd(M)$, which is finite dimensional. 
Therefore the set of objects (\ref{setO})
is 
bounded also as $\pAA_Y$-modules. By the distinguished 
triangle (\ref{tri:pPhi}), 
the set of objects $\pPhi(E)$
for $E \in \pB_{f^{\ast}\omega}$ with $v(E)=v$
is also bounded as $\pAA_Y$-modules,
hence so is the set of such 
$E$ as $\pPhi$
gives an equivalence (\ref{PhiA}). 
\end{proof}
The following corollary is immediate from the 
above proposition: 
\begin{cor}\label{cor:finite}
For any $0\neq v\in \pGamma_{f^{\ast}\omega}^{\mu}$, 
there is only a finite number of ways to 
decompose $v$ into 
$v_1 + \cdots + v_l$ for some $l \ge 1$
and $0\neq v_i\in \pGamma_{f^{\ast}\omega}^{\mu}$. 
\end{cor}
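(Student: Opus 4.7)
The plan is to deduce the corollary directly from Proposition~\ref{prop:bound}. Given $0 \neq v \in \pGamma_{f^{\ast}\omega}^{\mu}$ and a decomposition $v = v_1 + \cdots + v_l$ with $0 \neq v_i \in \pGamma_{f^{\ast}\omega}^{\mu}$, the definition of $\pGamma_{f^{\ast}\omega}^{\mu}$ allows me to choose objects $E_i \in \pB_{f^{\ast}\omega}^{\mu}$ with $v(E_i) = v_i$, and then $E \cneq E_1 \oplus \cdots \oplus E_l$ lies in the abelian category $\pB_{f^{\ast}\omega}^{\mu}$ and satisfies $v(E) = v$. Consequently every decomposition of $v$ is realised as a direct-summand decomposition of some object in the family
\begin{align*}
\mathcal{F} \cneq \{ E \in \pB_{f^{\ast}\omega}^{\mu} : v(E) = v \},
\end{align*}
which is bounded by Proposition~\ref{prop:bound}.

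First I would show that the number $l$ of summands is uniformly bounded on $\mathcal{F}$. Since $\mathcal{F}$ is parametrised by a scheme of finite type and $E \mapsto \dim_{\mathbb{C}} \End(E)$ is upper semi-continuous in flat families, there exists $N \in \mathbb{Z}_{\ge 0}$ such that $\dim \End(E) \le N$ for every $E \in \mathcal{F}$. A direct sum decomposition $E = \bigoplus_{i=1}^l E_i$ with all $E_i \neq 0$ yields $l$ pairwise orthogonal non-zero idempotents in $\End(E)$, which are linearly independent, so $l \le N$.

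Next I would bound the set of Mukai vectors that can appear among the $E_i$. Idempotents in the endomorphism algebras form a closed subscheme of the finite-type total space of $\End$ over the parameter space of $\mathcal{F}$; taking images of idempotents then exhibits the set of direct summands of objects in $\mathcal{F}$ as themselves parametrised by a scheme of finite type, hence as a bounded subfamily of $\pB_{f^{\ast}\omega}^{\mu}$. In particular the set
\begin{align*}
T \cneq \{ v(E') : E' \text{ is a direct summand of some } E \in \mathcal{F} \} \subset \pGamma_{f^{\ast}\omega}^{\mu}
\end{align*}
is finite. Combining this with the bound $l \le N$, only finitely many ordered tuples $(v_1, \ldots, v_l)$ with $v_i \in T \setminus \{0\}$ and $\sum v_i = v$ are possible, which is the claimed finiteness.

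The serious input is Proposition~\ref{prop:bound}; everything else is a formal consequence once boundedness is in hand. The only slightly delicate point I would expect is cleanly justifying that direct summands of $\mathcal{F}$ again form a bounded family, which is best handled by transporting $\mathcal{F}$ to $D^b \Coh(\pAA_Y)$ via the equivalence $\pPhi$ of (\ref{PhiA}) and invoking the standard boundedness of endomorphism algebras of families of coherent $\pAA_Y$-modules, keeping in mind that objects of $\pB_{f^{\ast}\omega}^{\mu}$ are in general complexes rather than sheaves.
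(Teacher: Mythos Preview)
Your proposal is correct and takes essentially the same approach as the paper, which merely asserts that the corollary is immediate from Proposition~\ref{prop:bound}; you have supplied the standard direct-sum-and-$\dim\End$ argument that makes this claim precise. The one point requiring care, boundedness of direct summands, you correctly flag and handle via the equivalence $\pPhi$.
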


\section{Flop formula of Donaldson-Thomas type invariants}\label{sec:DTflop}
This section is devoted to proving 
Theorem~\ref{thm:intro}. In this section, 
we always assume that $f \colon X \to Y$
is a 3-fold flopping contraction with $X, Y$ projective, 
and $\omega$ is an ample divisor on $Y$. 
We use the notation of the flop diagram (\ref{flop:dia}). 
\subsection{Hall algebras}\label{subsec:Hall}
Let $\mM$ be the moduli stack of objects 
$E \in D^b \Coh(X)$ with $\Ext^{<0}(E, E)=0$, 
which is an algebraic stack locally of finite type
(cf.~\cite{LIE}). 
The same arguments as in~\cite{Tst3}
easily imply that we have the open substack
\begin{align*}
\oO bj(\pB_{f^{\ast}\omega}^{\mu}) \subset \mM
\end{align*}
which parametrizes all the objects $E \in \pB_{f^{\ast}\omega}^{\mu}$. 
It decomposes into the connected components
\begin{align*}
\oO bj(\pB_{f^{\ast}\omega}^{\mu})
=\coprod_{v \in \pGamma_{f^{\ast}\omega}^{\mu}}
\oO bj_{v}(\pB_{f^{\ast}\omega}^{\mu})
\end{align*}
where $\oO bj_{v}(\pB_{f^{\ast}\omega}^{\mu})$
parametrizes objects $E \in \pB_{f^{\ast}\omega}^{\mu}$
with $v(E)=v$. 
By Lemma~\ref{prop:bound}, 
$\oO bj_{v}(\pB_{f^{\ast}\omega}^{\mu})$
is an algebraic stack of finite type. 

Recall that the stack theoretic
Hall algebra 
$H(\pB_{f^{\ast}\omega}^{\mu})$
of $\pB_{f^{\ast}\omega}^{\mu}$ is 
$\mathbb{Q}$-spanned by the isomorphism 
classes of the symbols (cf.~\cite[Section~2.2]{Joy4})
\begin{align*}
[\xX \stackrel{\rho}{\to} \oO bj(\pB_{f^{\ast}\omega}^{\mu})]
\end{align*}
where $\xX$ is an algebraic stack of finite type with 
affine geometric stabilizers and
$\rho$ is a 1-morphism. 
The relation is generated by 
\begin{align}\label{relation}
[\xX \stackrel{\rho}{\to} \oO bj(\pB_{f^{\ast}\omega}^{\mu})]
\sim [\yY \stackrel{ \rho|_{\yY} }{\to} \oO bj(\pB_{f^{\ast}\omega}^{\mu})]
+ [\uU \stackrel{\rho|_{\uU}}{\to} \oO bj(\pB_{f^{\ast}\omega}^{\mu})]
\end{align}
where $\yY \subset \xX$ is a closed substack and 
$\uU\cneq \xX \setminus \yY$. 
There is an associative $\ast$-product
on $H(\pB_{f^{\ast}\omega}^{\mu})$
based on the Ringel-Hall algebras. 
Let $\eE x(\pB_{f^{\ast}\omega}^{\mu})$ be the 
stack of short exact sequences $0 \to E_1 \to E_3 \to E_2 \to 0$
in $\pB_{f^{\ast}\omega}^{\mu}$
and $p_i \colon \eE x(\pB_{f^{\ast}\omega}^{\mu}) \to 
\oO bj(\pB_{f^{\ast}\omega}^{\mu})$ the 
1-morphism sending 
$E_{\bullet}$ to $E_i$. 
The $\ast$-product on $H(\pB_{f^{\ast}\omega}^{\mu})$ 
is given by 
\begin{align*}
[\xX_1 \stackrel{\rho_1}{\to} \oO bj(\pB_{f^{\ast}\omega}^{\mu})]
\ast [\xX_2 \stackrel{\rho_2}{\to} \oO bj(\pB_{f^{\ast}\omega}^{\mu})]
=[\xX_3 \stackrel{\rho_3}{\to} \oO bj(\pB_{f^{\ast}\omega}^{\mu})]
\end{align*}
where $(\xX_3, \rho_3=p_3 \circ (\rho_1', \rho_2'))$ is given by
the following Cartesian diagram
\begin{align*}
\xymatrix{
\xX_3 \ar[r]^{\hspace{-5mm}(\rho_1', \rho_2')}\ar[d] \ar@{}[dr]|\square
& \eE x(\pB_{f^{\ast}\omega}^{\mu}) \ar[d]^{(p_1, p_2)}
  \ar[r]^{p_3} &
\oO bj(\pB_{f^{\ast}\omega}^{\mu}) \\
\xX_1 \times \xX_2  \ar[r]^{\hspace{-5mm}(\rho_1, \rho_2)} 
& \oO bj(\pB_{f^{\ast}\omega}^{\mu})^{\times 2}.
& }
\end{align*}
The unit element is given by 
\begin{align*}
1 =[\Spec \mathbb{C} \to \oO bj(\pB_{f^{\ast}\omega}^{\mu})]\in H(\pB_{f^{\ast}\omega}^{\mu})
\end{align*}
which corresponds to $0 \in \bB_{f^{\ast}\omega}^{\mu}$. 
The algebra $H(\pB_{f^{\ast}\omega}^{\mu})$
is $\pGamma_{f^{\ast}\omega}^{\mu}$-graded: it decomposes as
\begin{align*}
H(\pB_{f^{\ast}\omega}^{\mu})
=\bigoplus_{v\in \pGamma_{f^{\ast}\omega}^{\mu}}
H_v(\pB_{f^{\ast}\omega}^{\mu})
\end{align*}
such that 
$H_v(\pB_{f^{\ast}\omega}^{\mu}) \ast H_{v'}(\pB_{f^{\ast}\omega}^{\mu})
\subset H_{v+v'}(\pB_{f^{\ast}\omega}^{\mu})$. 
The component $H_v(\pB_{f^{\ast}\omega}^{\mu})$
is $\mathbb{Q}$-spanned by the elements of the form 
$[\xX \stackrel{\rho}{\to} \oO bj_v(\pB_{f^{\ast}\omega}^{\mu})]$. 

We consider the following completion of 
$H(\pB_{f^{\ast}\omega}^{\mu})$
\begin{align*}
\widehat{H}(\pB_{f^{\ast}\omega}^{\mu})
\cneq \prod_{v\in \pGamma_{f^{\ast}\omega}^{\mu}}
H_v(\pB_{f^{\ast}\omega}^{\mu}).
\end{align*}
By Corollary~\ref{cor:finite}, 
the $\ast$-product on 
$H(\pB_{f^{\ast}\omega}^{\mu})$
extends to 
the $\ast$-product on 
$\widehat{H}(\pB_{f^{\ast}\omega}^{\mu})$. 
Moreover for any $\gamma \in \widehat{H}(\pB_{f^{\ast}\omega}^{\mu})$
with zero $H_0(\pB_{f^{\ast}\omega}^{\mu})$-component, 
the following elements are well-defined:
\begin{align*}
\exp(\gamma), \ 
\log(1+\gamma), \ (1+\gamma)^{-1} \in \widehat{H}(\pB_{f^{\ast}\omega}^{\mu}). 
\end{align*}
For a subcategory $\cC \subset \pB_{f^{\ast}\omega}^{\mu}$, 
suppose that there are constructible 
subsets $\oO bj_v(\cC) \subset \oO bj_v(\pB_{f^{\ast}\omega}^{\mu})$
whose closed points correspond to objects $E \in \cC$
with $v(E)=v$.  
By the relation (\ref{relation}), we are able to define the following element: 
\begin{align}\label{delta}
\delta_{\cC} \cneq \sum_{v \in \pGamma_{f^{\ast}\omega}^{\mu}}
[\oO bj_v(\cC) \subset \oO bj_v(\pB_{f^{\ast}\omega}^{\mu})] 
\in \widehat{H}(\pB_{f^{\ast}\omega}^{\mu}). 
\end{align}
By~\cite[Section~5.2]{Joy2}, 
there is a Lie subalgebra
$\widehat{H}^{\mathrm{Lie}}(\pB_{f^{\ast}\omega}^{\mu})$
of $\widehat{H}(\pB_{f^{\ast}\omega}^{\mu})$, 
consisting of elements called 
\textit{virtual indecomposable objects}. 
It contains elements of the 
form $[\xX \stackrel{\rho}{\to} \oO bj(\pB_{f^{\ast}\omega}^{\mu})]$
with $\xX$ a $\mathbb{C}^{\ast}$-gerbe over an algebraic space. 
Moreover, it also contains the elements of the form
\begin{align}\label{e-ele}
\epsilon_{\cC} \cneq \log \delta_{\cC} 
\in \widehat{H}^{\mathrm{Lie}}(\pB_{f^{\ast}\omega}^{\mu}). 
\end{align}

\subsection{Integration map}\label{subsec:int}
Let $\chi$ be the pairing on $\pGamma_{f^{\ast}\omega}^{\mu}$, 
given by 
\begin{align}\label{chi}
\chi(v_1, v_2)
=D_2 \beta_1 -D_1 \beta_2 -\frac{1}{2} c_1(X) D_1 D_2
\end{align}
where we write 
$v_i=(D_i, \beta_i, n_i)$. 
Note that $\chi$ is anti-symmetric 
if either $D_1$ or $D_2$ or $c_1(X)$ 
is zero. 
Let us take 
$E_i \in \pB_{f^{\ast}\omega}^{\mu}$ with 
$v(E_i)=v_i$. 
If $D_1$ or $D_2$ or $c_1(X)$ is zero, 
we have the equality
\begin{align}\label{chi12}
\chi(v_1, v_2)&=
\dim \Hom(E_1, E_2) - \dim \Ext^1(E_1, E_2) \\
\notag
&\hspace{20mm}+ \dim \Ext^1(E_2, E_1) - \dim \Hom(E_2, E_1)
\end{align}
by the Riemann-Roch theorem and the Serre duality. 

Let $\nu$ be a locally constructible function on 
$\mM$, satisfying the following assumption:  
\begin{assume}\label{assume}
The function $\nu$ is one of the following:  
\begin{itemize}
\item $\nu$ is Behrend 
function~\cite{Beh}
on the algebraic stack $\mM$ (cf.~\cite[Proposition~4.4]{JS}), 
denoted by $\chi_B$.  
In this case, we always assume that $X$ is a Calabi-Yau 3-fold,
i.e. $K_X=0$ and $H^1(\oO_X)=0$.  
\item $\nu \equiv 1$. 
In this case, $X$ is an arbitrary smooth projective 3-fold. 
\end{itemize}
\end{assume}
We define $\widehat{C}(\pGamma_{f^{\ast}\omega}^{\mu})$ 
to be
\begin{align*}
\widehat{C}(\pGamma_{f^{\ast}\omega}^{\mu})
\cneq \prod_{v \in \pGamma_{f^{\ast}\omega}^{\mu}}
\mathbb{Q} \cdot c_v
\end{align*}
with bracket $[c_{v_1}, c_{v_2}]_{\nu}$
given by
\begin{align}\label{brac}
[c_{v_1}, c_{v_2}]_{\nu} \cneq 
(-1)^{\epsilon(v_1, v_2)} \chi(v_1, v_2) c_{v_1 + v_2}.
\end{align}
Here $\epsilon(v_1, v_2)=\chi(v_1, v_2)$
if $\nu=\chi_B$, 
and $\epsilon(v_1, v_2)=0$ if $\nu \equiv 1$. 
Let $\pGamma_0 \subset \pGamma_{f^{\ast}\omega}^{\mu}$
be the subset of $(D, \beta, n)$ with $D=0$. 
Note that $\pGamma_0$ is the set of Mukai 
vectors of objects in $\pPPer_{0}(X/Y)[-1]$, and 
it is independent of $\mu$ and $\omega$. 
For three elements
$c_{v_1}, c_{v_2}, c_{v_3}$ in $\pGamma_{f^{\ast}\omega}^{\mu}$, 
the 
bracket (\ref{brac}) satisfies the Leibniz rule
if two of $v_i$ are contained in $\pGamma_0$, or 
$c_1(X)=0$. 
By Lemma~\ref{cor:finite}, the
bracket (\ref{brac}) defines the well-defined bracket on 
$\widehat{C}(\pGamma_{f^{\ast}\omega}^{\mu})$.  

The same arguments of~\cite[Theorem~5.12]{JS}, \cite[Theorem~6.12]{Joy2}
show that 
there is a $\mathbb{Q}$-linear 
homomorphism called \textit{integration map}
\begin{align}\notag
\Pi^{\nu} \colon 
\widehat{H}^{\mathrm{Lie}}(\pB_{f^{\ast}\omega}^{\mu})
\to \widehat{C}(\pGamma_{f^{\ast}\omega}^{\mu})
\end{align}
such that
if $\xX$ is a $\mathbb{C}^{\ast}$-gerbe 
over an algebraic space $\xX'$, 
we have 
\begin{align}\label{Pinu}
\Pi^{\nu}(
[\xX \stackrel{\rho}{\to} 
\oO bj_v(\pB_{f^{\ast}\omega}^{\mu})])
= \left(\sum_{k\in \mathbb{Z}}
k \cdot \chi(\nu^{-1}(k)) \right)c_{v}
\end{align} 
where $\chi(\ast)$ on the RHS is taken for 
the constructible subsets in $\xX'$. 
Let us take 
$\gamma_i \in \widehat{H}^{\mathrm{Lie}}(\pB_{f^{\ast}\omega}^{\mu}) \cap 
H_{v_i}(\pB_{f^{\ast}\omega}^{\mu})$
with $i=1, 2$, 
and suppose that one of $v_i$ is contained in $\pGamma_0$, 
or $c_1(X)=0$. 
Moreover if $\nu=\chi_B$, 
suppose that 
$\gamma_i$ is written as
\begin{align*}
\gamma_i=\sum_{j}a_{i, j} [ \xX_{i, j} \stackrel{\rho_{i, j}}{\to}
\oO bj_{v_i}(\pB_{f^{\ast}\omega}^{\mu})]
\end{align*}
with $a_{i, j} \in \mathbb{Q}$
such that for any closed point 
$x \in \xX_{i, j}$, the object in $\pB_{f^{\ast}\omega}^{\mu}$
corresponding the point  
$\rho_{i, j}(x)$ is a \textit{sheaf}. 
Then together with (\ref{chi12})
and Assumption~\ref{assume}, 
the arguments of~\cite[Theorem~5.12]{JS}
for $\nu=\chi_B$ and~\cite[Theorem~6.12]{Joy2}
for $\nu \equiv 1$ 
show that 
\begin{align}\label{liehom}
\Pi^{\nu}[\gamma_1, \gamma_2]_{\nu}= [\Pi^{\nu}(\gamma_1), 
\Pi^{\nu}(\gamma_2)]_{\nu}. 
\end{align}
\begin{rmk}
By~\cite[Theorem~5.5]{JS}, the 
condition that $\rho_{i, j}(x)$ corresponds to a sheaf
implies that the stack $\oO bj(\pB_{f^{\ast}\omega}^{\mu})$
is analytic locally at $\rho_{i, j}(x)$
written as a critical locus of a
holomorphic Chern Simons function.  
This property is required to show (\ref{liehom}) 
in the proof of~\cite[Theorem~5.12]{JS}. 
\end{rmk}
\subsection{Donaldson-Thomas type invariants}\label{subsec:DT}
Let $\cC_{f^{\ast}\omega}^{\mu} \subset \pB_{f^{\ast}\omega}^{\mu}$ be the 
subcategory in Lemma~\ref{EinCoh}, 
and consider the element (\ref{e-ele})
for $\cC=\cC_{f^{\ast}\omega}^{\mu}$. 
We define the DT type invariant 
which depends on a choice of $\nu$ in 
Assumption~\ref{assume}
as follows: 
\begin{defi}
For $v \in \pGamma_{f^{\ast}\omega}^{\mu}$, 
we define 
$\DT^{\nu}_{f^{\ast}\omega}(v) \in \mathbb{Q}$
by the following formula: 
\begin{align}\label{Pi:DT}
\Pi^{\nu}(\epsilon_{\cC_{f^{\ast}\omega}^{\mu}})
= \sum_{v \in \pGamma_{f^{\ast}\omega}^{\mu} }
(-1)^{\epsilon(\nu)} \DT^{\nu}_{f^{\ast}\omega}(v) \cdot c_v. 
\end{align} 
Here $\epsilon(\nu)=1$ if $\nu=\chi_B$
and $\epsilon(\nu)=0$ if $\nu\equiv 1$.
\end{defi}
\begin{rmk}
It is obvious that $\DT_{f^{\ast}\omega}^{\nu}(v)$ does not depend on 
a choice of $p$. 
Alternatively, the invariant $\DT_{f^{\ast}\omega}^{\nu}(v)$ 
can be defined by applying the 
integration map to the element $\epsilon_{\cC_{f^{\ast}\omega}^{\mu}}$
in the Hall algebra of $\Coh_{\le 2}(X)$, as in~\cite{JS}. 
When $\nu=\chi_B$, the invariant 
$\DT_{f^{\ast}\omega}^{\nu}(v)$ is the 
generalized DT invariant $\DT_{f^{\ast}\omega}(v)$ 
in Theorem~\ref{thm:intro}. 
\end{rmk}
\begin{rmk}\label{rmk:GIT}
Let $\mM_{f^{\ast}\omega}(v)$ be the 
moduli stack of 
$\hat{\mu}_{f^{\ast}\omega}$-semistable 
sheaves $E \in \Coh_{\le 2}(X)$ with 
$v(E)=v$. 
If any $[E] \in \mM_{f^{\ast}\omega}(v)$ is 
$\hat{\mu}_{f^{\ast}\omega}$-stable
then $\mM_{f^{\ast}\omega}(v)$ is the $\mathbb{C}^{\ast}$-gerbe
over a projective scheme $M_{f^{\ast}\omega}(v)$. 
Indeed, the stack $\mM_{f^{\ast}\omega}(v)$
coincides with the stack of $f^{\ast}\omega +\varepsilon H$-Gieseker
semistable sheaves 
with Mukai vector $v$
for $0<\varepsilon \ll 1$, consisting of only 
$f^{\ast}\omega +\varepsilon H$-Gieseker stable sheaves. 
Since $f^{\ast}\omega+\varepsilon H$ is ample for $0<\varepsilon \ll 1$, 
the coarse moduli space of $\mM_{f^{\ast}\omega}(v)$
is constructed by GIT quotient \emph{(cf.~\cite[Section~4]{HL})}. 
By (\ref{Pinu}), the invariant
$\DT_{f^{\ast}\omega}^{\nu}(\omega)$ coincides with 
the $\nu$-weighted Euler characteristic of $M_{f^{\ast}\omega}(v)$
up to sign. 
For instance, this is the case if $v=(P, \beta, n)$ and 
$P$ does not decompose into 
$P_1+P_2$ for effective divisor classes $P_i$.
\end{rmk}
For $n\in \mathbb{Z}$ and $\beta \in H_2(X/Y)$
with $\beta \ge 0$,  
there is another invariant
\begin{align}\label{invN}
N_{n, \beta}^{\nu} \in \mathbb{Q}
\end{align}
defined to be the DT type invariant 
counting $\mu_H$-semistable sheaves
$F \in \Coh_{0}(X/Y)$ satisfying
\begin{align*}
(\ch_2(F), \ch_3(F))=([F], \chi(F))=(\beta, n).
\end{align*} 
Here the 
$\mu_H$-stability on $\Coh_{0}(X/Y)$
is defined in Subsection~\ref{subsec:perv}, 
and $\nu$ is either $\nu=\chi_B$ or $\nu\equiv 1$
on the moduli stack of objects in $\Coh_0(X/Y)$
as in Assumption~\ref{assume}. 
We refer to~\cite[Subsection~6.4]{JS}, 
~\cite[Proposition-Definition~5.7]{Tcurve1},~\cite[Section~2.3]{Todmu}
for details.
In our situation, 
the invariant (\ref{invN}) for 
$n\le 0$ and $\beta>0$
can be defined in the following way. 
For $\mu' \in \mathbb{Q}_{\le 0}$, 
let $\cC_H^{\mu'} \subset \iF$
be the subcategory consisting of $\mu_H$-semistable 
sheaves $F$ with $\mu_H(F)=\mu'$ which makes sense
by Lemma~\ref{desc:F}.  
Then $N_{n, \beta}^{\nu}$ is defined by  
\begin{align}\label{def:N}
\Pi^{\nu}\epsilon_{\cC_H^{\mu'}}=
\sum_{n/H \cdot \beta =\mu'}
(-1)^{\epsilon(\nu)}N_{n, \beta}^{\nu} \cdot c_{(0, \beta, n)}. 
\end{align}
The invariant (\ref{invN}) is known to be independent of $H$
(cf.~\cite[Theorem~6.16]{JS}). 
By convention, we set $N_{n, \beta}^{\nu}=N_{-n, -\beta}^{\nu}$
for $\beta \le 0$. 
The invariant (\ref{invN})
 satisfies the following relation (cf.~\cite[Lemma~5.1]{Tcurve2})
\begin{align}\label{rel:N}
N_{n, \beta}^{\nu}=N_{-n, -\beta}^{\nu}=N_{-n, \beta}^{\nu}. 
\end{align}
We have the following lemma: 
\begin{lem}
We have the following identities: 
\begin{align}\label{eF}
&\Pi^{\nu}(\epsilon_{\oF})=
\sum_{n<0, \beta>0, f_{\ast}\beta=0} 
(-1)^{\epsilon(\nu)}N_{n, \beta}^{\nu} \cdot c_{(0, \beta, n)}
\\
\notag
&\Pi^{\nu}(\epsilon_{\iF})=
\sum_{n\le 0, \beta>0, f_{\ast}\beta=0} 
(-1)^{\epsilon(\nu)}N_{n, \beta}^{\nu} \cdot c_{(0, \beta, n)}.
\end{align}
\end{lem}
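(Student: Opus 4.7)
The plan is to decompose $\oF$ and $\iF$ into their $\mu_H$-semistable pieces via Harder–Narasimhan filtrations and then transfer the decomposition through $\log$ and the integration map $\Pi^{\nu}$. By Lemma~\ref{desc:F},
\[
\oF = \langle \cC_H^{\mu'} : \mu' < 0 \rangle_{\rm{ex}}, \qquad
\iF = \langle \cC_H^{\mu'} : \mu' \le 0 \rangle_{\rm{ex}},
\]
inside $\Coh_0(X/Y)$, and every object in either category has Mukai vector in $\pGamma_0$ (the rank and divisor components both vanish for sheaves in $\Coh_0(X/Y)$).

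First, using the existence and uniqueness of Harder–Narasimhan filtrations on $\Coh_0(X/Y)$ with respect to $\mu_H$, I obtain the Hall-algebra identities
\[
\bar\delta_{\oF} = \prod_{\mu' < 0}^{\rightarrow} \bar\delta_{\cC_H^{\mu'}},
\qquad
\bar\delta_{\iF} = \prod_{\mu' \le 0}^{\rightarrow} \bar\delta_{\cC_H^{\mu'}},
\]
where $\bar\delta_{\cC} \cneq 1 + \delta_{\cC \setminus \{0\}}$ and the products are ordered by strictly decreasing $\mu'$ (so the leftmost factor corresponds to the top HN piece). Convergence in the completed Hall algebra of $\Coh_0(X/Y)$ reduces to the finiteness of HN decompositions of any fixed total class $(0,\beta,n)$, a statement analogous to Corollary~\ref{cor:finite}. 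Taking logarithms yields $\epsilon_{\oF} = \log \bar\delta_{\oF}$ as an ordered product of the factors $\bar\delta_{\cC_H^{\mu'}}$, and similarly for $\iF$.

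Next I apply $\Pi^{\nu}$. The key structural observation is that every class appearing lies in $\pGamma_0$, and formula~(\ref{chi}) shows that $\chi$ vanishes identically on $\pGamma_0 \times \pGamma_0$ (every term contains a factor $D=0$). Hence the bracket $[\,\cdot\,,\,\cdot\,]_{\nu}$ on the relevant part of $\widehat{C}(\pGamma_{f^{\ast}\omega}^{\mu})$ is zero. The hypothesis that one of $v_i$ lies in $\pGamma_0$ required for (\ref{liehom}) is automatic, and for $\nu=\chi_B$ each $\epsilon_{\cC_H^{\mu'}}$ is supported on honest sheaves, so the requirement on the points $\rho_{i,j}(x)$ noted in Subsection~\ref{subsec:int} is met. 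Consequently $\Pi^{\nu}$ is a Lie homomorphism on all the pieces we need, and applying it term by term to the Baker--Campbell--Hausdorff expansion of the logarithm of the product makes every iterated bracket vanish in $\widehat{C}$, leaving
\[
\Pi^{\nu}(\epsilon_{\oF}) = \sum_{\mu' < 0} \Pi^{\nu}(\epsilon_{\cC_H^{\mu'}}),
\qquad
\Pi^{\nu}(\epsilon_{\iF}) = \sum_{\mu' \le 0} \Pi^{\nu}(\epsilon_{\cC_H^{\mu'}}).
\]

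Finally I substitute the definition~(\ref{def:N}), so that each $\Pi^{\nu}(\epsilon_{\cC_H^{\mu'}})$ contributes $\sum_{n/(H\cdot\beta)=\mu'} (-1)^{\epsilon(\nu)} N_{n,\beta}^{\nu} \cdot c_{(0,\beta,n)}$. Since $\oF, \iF \subset \Coh_0(X/Y)$ we automatically have $f_{\ast}\beta = 0$; since $H$ is ample and $\beta > 0$ we have $H \cdot \beta > 0$, so the condition $\mu' < 0$ (resp.~$\mu' \le 0$) translates to $n < 0$ (resp.~$n \le 0$). Collecting gives the two identities. The main technical difficulty I anticipate is making the BCH-plus-integration-map manipulation rigorous and verifying summability of the infinite product in the Hall algebra; both hinge essentially on the vanishing of $\chi$ on $\pGamma_0$ together with HN boundedness in $\Coh_0(X/Y)$ analogous to Proposition~\ref{prop:bound}.
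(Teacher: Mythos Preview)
Your proposal is correct and follows essentially the same route as the paper: decompose $\pF$ via Harder--Narasimhan filtrations (Lemma~\ref{desc:F}) to obtain $\delta_{\pF}=\prod^{\rightarrow}_{\mu'}\delta_{\cC_H^{\mu'}}$, take logarithms, and use the vanishing of $\chi$ on $\pGamma_0\times\pGamma_0$ together with (\ref{liehom}) to collapse $\Pi^{\nu}(\epsilon_{\pF})$ into the sum $\sum_{\mu'}\Pi^{\nu}(\epsilon_{\cC_H^{\mu'}})$, then substitute (\ref{def:N}). The paper's proof is exactly this argument, written slightly more tersely.
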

\begin{proof}
We prove the second identity.
By Lemma~\ref{desc:F}, 
the existence of Harder-Narasimhan filtrations with respect to the 
$\mu_H$-stability yields
\begin{align*}
\delta_{\iF}=\prod^{\longrightarrow}_{\mu'\le 0} \delta_{\cC_H^{\mu'}}. 
\end{align*} 
In the RHS, we take the product with the decreasing order 
of $\mu'$. We refer to~\cite[Theorem~5.11]{Joy4}
for details of the above 
product formula. 
We take the logarithm of both sides and apply 
the integration map. 
Since $\chi(v_1, v_2)=0$
for $v_i \in \iGamma_0$, 
the property of the integration map (\ref{liehom})
shows that
\begin{align*}
\Pi^{\nu} (\epsilon_{\iF})=\sum_{\mu' \le 0} 
\Pi^{\nu} (\epsilon_{\cC_H^{\mu'}}). 
\end{align*} 
By (\ref{def:N}), we obtain the desired identity. 
\end{proof}

\subsection{Flop formula}
In this subsection, we describe
the flop transformation formula of the invariants
$\DT_{f^{\ast}\omega}^{\nu}(v)$ in terms of 
the invariants $N_{n, \beta}^{\nu}$. 
\begin{prop}\label{prop:delta}
We have the following 
identity in $\widehat{H}(\pB_{f^{\ast}\omega}^{\mu})$: 
\begin{align}\label{form:delta}
\delta_{\pB_{f^{\ast}\omega}^{\mu}}= \delta_{\pF} \ast \delta_{\cC_{f^{\ast}\omega}^{\mu}} \ast \delta_{\pT[-1]}. 
\end{align}
\end{prop}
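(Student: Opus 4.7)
The plan is to realize the identity as the Hall-algebra shadow of the existence-and-uniqueness filtration (\ref{filt:B}) provided by Proposition~\ref{prop:filt}, combined with the Hom-vanishings in Lemma~\ref{lem:C}. First I would unwind the right-hand side: by the iterated fiber-product description of $\ast$ in Subsection~\ref{subsec:Hall}, the triple product $\delta_{\pF} \ast \delta_{\cC_{f^{\ast}\omega}^{\mu}} \ast \delta_{\pT[-1]}$ is represented by $[\zZ \stackrel{\rho}{\to} \oO bj(\pB_{f^{\ast}\omega}^{\mu})]$, where $\zZ$ is the moduli stack parametrizing data $(E_\bullet)$ of three-step filtrations $0 = E_0 \subset E_1 \subset E_2 \subset E_3$ in $\pB_{f^{\ast}\omega}^{\mu}$ with $E_1 \in \pF$, $E_2/E_1 \in \cC_{f^{\ast}\omega}^{\mu}$, $E_3/E_2 \in \pT[-1]$, and $\rho$ sends the filtration to the top term $E_3$. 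This uses that $\pF$, $\cC_{f^{\ast}\omega}^{\mu}$, $\pT[-1]$ are all contained in $\pB_{f^{\ast}\omega}^{\mu}$, which follows from Lemma~\ref{EinCoh} and the defining tilting description of $\pB_{f^{\ast}\omega}^{\mu}$.

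Next I would analyze $\rho \colon \zZ \to \oO bj(\pB_{f^{\ast}\omega}^{\mu})$ geometrically. Proposition~\ref{prop:filt} gives surjectivity on geometric points: every $E \in \pB_{f^{\ast}\omega}^{\mu}$ admits a filtration of the required type. For the fibers, fix $E$ and consider the groupoid $\rho^{-1}(E)$ of such filtrations on $E$; Lemma~\ref{lem:C}, together with an iterated application of the Hom-vanishing to any map between two such filtrations, implies that any two filtrations of $E$ are isomorphic via a unique isomorphism compatible with $E$. Concretely, given $E_\bullet$ and $E'_\bullet$, the composite $E_1 \hookrightarrow E \twoheadrightarrow E/E_2'$ is zero because $\Hom(\pF, \cC_{f^{\ast}\omega}^{\mu} \oplus \pT[-1]) = 0$ (and similarly for the $\cC$-piece), which forces $E_1 \subseteq E_1'$; symmetry and an induction on the filtration length give equality. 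Hence $\rho$ induces an equivalence of stacks between $\zZ$ and $\oO bj(\pB_{f^{\ast}\omega}^{\mu})$, compatible with the grading by $\pGamma_{f^{\ast}\omega}^{\mu}$.

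Finally I would conclude via the stratification relation (\ref{relation}) in the Hall algebra: stratifying $\oO bj(\pB_{f^{\ast}\omega}^{\mu})$ into locally closed substacks and pulling back via the equivalence $\rho$ shows
\begin{align*}
\delta_{\pF} \ast \delta_{\cC_{f^{\ast}\omega}^{\mu}} \ast \delta_{\pT[-1]} = [\zZ \stackrel{\rho}{\to} \oO bj(\pB_{f^{\ast}\omega}^{\mu})] = [\oO bj(\pB_{f^{\ast}\omega}^{\mu}) \stackrel{\mathrm{id}}{\to} \oO bj(\pB_{f^{\ast}\omega}^{\mu})] = \delta_{\pB_{f^{\ast}\omega}^{\mu}}
\end{align*}
in $\widehat{H}(\pB_{f^{\ast}\omega}^{\mu})$, as desired. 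The convergence of the product is guaranteed by Corollary~\ref{cor:finite}.

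The only real obstacle is the rigorous upgrade of the pointwise uniqueness in Proposition~\ref{prop:filt} to a genuine isomorphism of algebraic stacks $\zZ \simeq \oO bj(\pB_{f^{\ast}\omega}^{\mu})$; in particular one must verify that the automorphism groups match, i.e.\ that no spurious stabilizers arise from the filtration data. This is precisely what the vanishing $\Hom(\cC_i, \cC_j) = 0$ for $i < j$ in Lemma~\ref{lem:C} is designed to ensure: an automorphism of a filtration must be block-diagonal with respect to $(E_1, E_2/E_1, E_3/E_2)$, so $\Aut(E_\bullet) = \Aut(E_3)$ functorially.
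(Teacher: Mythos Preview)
Your proposal is correct and is precisely the approach the paper takes: the paper's proof simply cites Proposition~\ref{prop:filt}, Lemma~\ref{lem:C}, and the standard Joyce argument \cite[Theorem~5.11]{Joy4}, which is exactly the existence-and-uniqueness filtration argument you have unpacked. One small expository slip: in your last paragraph, automorphisms of a filtered object are block \emph{upper-triangular} rather than block-diagonal, but what you actually need (and what follows immediately from the uniqueness of the filtration) is that the forgetful map $\Aut(E_\bullet) \to \Aut(E_3)$ is a bijection, which gives the matching of stabilizers.
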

\begin{proof}
The equality (\ref{form:delta}) follows 
from Proposition~\ref{prop:filt}, Lemma~\ref{lem:C}
and the same argument of~\cite[Theorem~5.11]{Joy4}. 
\end{proof}
\begin{lem}
We have the following 
identity in $\widehat{H}(\pB_{f^{\ast}\omega}^{\mu})$:
\begin{align}\label{form:delta2}
\delta_{\pF} \ast \delta_{\cC_{f^{\ast}\omega}^{\mu}} \ast \delta_{\pF}^{-1} =
\delta_{\pB_{f^{\ast}\omega}^{\mu}} \ast \delta_{\pPPer_0(X/Y)[-1]}^{-1}. 
\end{align}
\end{lem}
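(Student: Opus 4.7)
The plan is to deduce~(\ref{form:delta2}) from Proposition~\ref{prop:delta} by isolating the factor $\delta_{\pT[-1]}$. After rearranging, the task will reduce to the auxiliary Hall algebra identity
\begin{align*}
\delta_{\pPPer_0(X/Y)[-1]} = \delta_{\pF} \ast \delta_{\pT[-1]}
\end{align*}
in $\widehat{H}(\pB_{f^\ast\omega}^\mu)$. Once this is granted, combining it with~(\ref{form:delta}) and multiplying on the right by $\delta_{\pPPer_0(X/Y)[-1]}^{-1}$ yields
\begin{align*}
\delta_{\pB_{f^\ast\omega}^\mu} \ast \delta_{\pPPer_0(X/Y)[-1]}^{-1}
&= \delta_{\pF} \ast \delta_{\cC_{f^\ast\omega}^\mu} \ast \delta_{\pT[-1]} \ast \delta_{\pT[-1]}^{-1} \ast \delta_{\pF}^{-1} \\
&= \delta_{\pF} \ast \delta_{\cC_{f^\ast\omega}^\mu} \ast \delta_{\pF}^{-1},
\end{align*}
which is exactly~(\ref{form:delta2}).

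To establish the auxiliary identity, I would use the description~(\ref{Per0}) of $\pPPer_0(X/Y)$ as the tilt of $\Coh_0(X/Y)$ at the torsion pair $(\pT, \pF)$. Standard tilting theory then equips $\pPPer_0(X/Y)$ with the torsion pair $(\pF[1], \pT)$, which shifts to a torsion pair $(\pF, \pT[-1])$ on $\pPPer_0(X/Y)[-1]$: every $E \in \pPPer_0(X/Y)[-1]$ fits functorially into a unique exact sequence $0 \to F \to E \to T[-1] \to 0$ with $F \in \pF$ and $T \in \pT$, and the vanishing $\Hom(\pF, \pT[-1]) = \Ext^{-1}(\pF, \pT) = 0$ required for the torsion pair axiom is automatic. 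The existence and uniqueness of this filtration, together with the same Hall algebra filtration formula of~\cite[Theorem~5.11]{Joy4} that underlies Proposition~\ref{prop:delta}, will produce the desired identity $\delta_{\pPPer_0(X/Y)[-1]} = \delta_{\pF} \ast \delta_{\pT[-1]}$.

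I do not expect a serious obstacle. The main point to verify is simply that every symbol in the computation lies in $\widehat{H}(\pB_{f^\ast\omega}^\mu)$: the inclusions $\pF,\, \pT[-1] \subset \pPPer_0(X/Y)[-1] \subset \pB_{f^\ast\omega}^\mu$ built into the definition of $\pB_{f^\ast\omega}^\mu$ put the relevant Mukai vectors in $\pGamma_0 \subset \pGamma_{f^\ast\omega}^\mu$, and the inverses $\delta_{\pF}^{-1}$ and $\delta_{\pPPer_0(X/Y)[-1]}^{-1}$ exist because both elements have unit constant term in the sense of Subsection~\ref{subsec:Hall}. The entire argument is therefore a short algebraic manipulation once the induced torsion pair on $\pPPer_0(X/Y)[-1]$ is noted.
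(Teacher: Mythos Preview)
Your proposal is correct and follows essentially the same route as the paper: the paper's proof also reduces~(\ref{form:delta2}) to Proposition~\ref{prop:delta} together with the auxiliary identity $\delta_{\pF} \ast \delta_{\pT[-1]} = \delta_{\pPPer_0(X/Y)[-1]}$, which it derives from~(\ref{Per0}). Your additional remarks on the torsion pair on the shifted heart and on membership in $\widehat{H}(\pB_{f^\ast\omega}^\mu)$ are reasonable elaborations, but the underlying argument is the same.
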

\begin{proof}
The equality (\ref{form:delta2}) follows from 
(\ref{form:delta}) and 
$\delta_{\pF} \ast \delta_{\pT[-1]}=\delta_{\pPPer_0(X/Y)[-1]}$, 
where the last equality follows from (\ref{Per0}). 
\end{proof}
For $v\in \pGamma_{f^{\ast}\omega}^{\mu}$, we define
the invariant $\pDT_{f^{\ast}\omega}^{\nu}(v) \in \mathbb{Q}$
by 
\begin{align}\label{def:hat}
\Pi^{\nu}
\log \left( 
\delta_{\pB_{f^{\ast}\omega}^{\mu}} \ast \delta_{\pPPer_0(X/Y)[-1]}^{-1} \right)= \sum_{v \in \pGamma_{f^{\ast}\omega}^{\mu}}
\pDT_{f^{\ast}\omega}^{\nu}(v) \cdot c_v. 
\end{align}
For a fixed divisor class $P \in H^2(X)$, 
we set
\begin{align*}
&\DT_{f^{\ast}\omega}^{\nu, \mu}(P)
\cneq \sum_{(P, -\beta, -n) \in \pGamma_{f^{\ast}\omega}^{\mu}}
\DT_{f^{\ast}\omega}^{\nu}(P, -\beta, -n)q^n t^{\beta} \\
&\pDT_{f^{\ast}\omega}^{\nu, \mu}(P)
\cneq \sum_{(P, -\beta, -n) \in \pGamma_{f^{\ast}\omega}^{\mu}}
\pDT_{f^{\ast}\omega}^{\nu}(P, -\beta, -n)q^n t^{\beta}. 
\end{align*}
For a curve class $\beta$, 
we set 
$\langle P, \beta \rangle \in \mathbb{Z}$ to be
\begin{align}\label{def:delta}
\langle P, \beta \rangle \cneq \left\{ \begin{array}{ll}
(-1)^{P \cdot \beta -1} P \cdot \beta, & \nu =\chi_B \\
P \cdot \beta, & \nu \equiv 1.
\end{array} \right. 
\end{align}
\begin{prop}\label{prop:hat}
We have the following formulas:
\begin{align*}
\oDT_{f^{\ast}\omega}^{\nu, \mu}(P)
&=\prod_{\begin{subarray}{c}
n> 0, \beta>0 \\
f_{\ast}\beta=0
\end{subarray}}
\exp \left( N_{n, \beta}^{\nu}
q^n t^{-\beta} \right)^{\langle P, \beta \rangle} \cdot
\DT_{f^{\ast}\omega}^{\nu, \mu}(P) \\
\iDT_{f^{\ast}\omega}^{\nu, \mu}(P)
&=\prod_{\begin{subarray}{c}
n\ge 0, \beta>0 \\
f_{\ast}\beta=0
\end{subarray}}
\exp \left( N_{n, \beta}^{\nu}
q^n t^{-\beta} \right)^{\langle P, \beta \rangle} \cdot
\DT_{f^{\ast}\omega}^{\nu, \mu}(P). 
\end{align*}
\end{prop}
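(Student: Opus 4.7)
The plan is to apply the integration map $\Pi^{\nu}$ to the logarithm of the identity (\ref{form:delta2}) and reinterpret the resulting expression in $\widehat{C}(\pGamma^{\mu}_{f^{\ast}\omega})$ as a generating series identity. Starting from
\[
\delta_{\pB^{\mu}_{f^{\ast}\omega}} \ast \delta_{\pPPer_0(X/Y)[-1]}^{-1}
= \delta_{\pF} \ast \delta_{\cC^{\mu}_{f^{\ast}\omega}} \ast \delta_{\pF}^{-1}
\]
and applying $\log$ in the completed Hall algebra, the standard conjugation-exponential formula $\log(e^{a}e^{b}e^{-a}) = e^{\mathrm{ad}_{a}}(b)$ gives
\[
\log\!\bigl(\delta_{\pB^{\mu}_{f^{\ast}\omega}} \ast \delta_{\pPPer_0(X/Y)[-1]}^{-1}\bigr)
= \exp(\mathrm{ad}_{\epsilon_{\pF}})\bigl(\epsilon_{\cC^{\mu}_{f^{\ast}\omega}}\bigr).
\]

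The next step is to apply $\Pi^{\nu}$ to both sides. On the left, definition (\ref{def:hat}) yields $\sum_{v}\pDT^{\nu}_{f^{\ast}\omega}(v) c_{v}$. On the right, every iterated bracket in the expansion $\sum_{k\ge 0}\tfrac{1}{k!}\mathrm{ad}_{\epsilon_{\pF}}^{k}(\epsilon_{\cC^{\mu}_{f^{\ast}\omega}})$ has all outer factors $\epsilon_{\pF}$ supported on $\pGamma_0$ (i.e.\ with $D=0$), while $\epsilon_{\cC^{\mu}_{f^{\ast}\omega}}$ is supported on sheaves by Lemma~\ref{EinCoh}. Thus the Lie-algebra-homomorphism property (\ref{liehom}) applies at every stage, yielding
\[
\sum_{v\in \pGamma^{\mu}_{f^{\ast}\omega}} \pDT^{\nu}_{f^{\ast}\omega}(v) c_{v}
= \exp\!\bigl(\mathrm{ad}_{\Pi^{\nu}(\epsilon_{\pF})}\bigr)\bigl(\Pi^{\nu}(\epsilon_{\cC^{\mu}_{f^{\ast}\omega}})\bigr).
\]

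The third step is to evaluate this adjoint action. For $v_{1}=(0,\beta_{1},n_{1})\in \pGamma_{0}$ and arbitrary $v_{2}=(P,\beta_{2},n_{2})$, the pairing reduces to $\chi(v_{1},v_{2})=P\cdot\beta_{1}$, which depends only on $v_{1}$ and the fixed class $P$. Because $\chi$ vanishes identically on $\pGamma_{0}\times \pGamma_{0}$, the subspace spanned by $\pGamma_{0}$-classes is abelian under $[-,-]_{\nu}$, so successive applications of $\mathrm{ad}_{\Pi^{\nu}(\epsilon_{\pF})}$ collapse to multiplication by a commuting formal element. Assembling the sign $(-1)^{\epsilon(\nu)}$ from $\Pi^{\nu}(\epsilon_{\pF})$ in (\ref{eF}) with the sign $(-1)^{\epsilon(v_{1},v_{2})}$ in the bracket (\ref{brac}) and using the definition (\ref{def:delta}) of $\langle P,\beta\rangle$, a case-check for $\nu=\chi_{B}$ and $\nu\equiv 1$ gives the uniform formula
\[
\mathrm{ad}_{\Pi^{\nu}(\epsilon_{\pF})}(c_{v_{2}})
= \sum_{(0,\beta,n)\in S_{p}} N^{\nu}_{n,\beta}\,\langle P,\beta\rangle\, c_{(0,\beta,n)+v_{2}},
\]
where $S_{0}=\{n<0,\beta>0,f_{\ast}\beta=0\}$ and $S_{-1}=\{n\le 0,\beta>0,f_{\ast}\beta=0\}$ as in (\ref{eF}).

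Finally, exponentiating this commuting operator and translating to generating series via $c_{(P,-\beta,-n)}\leftrightarrow q^{n}t^{\beta}$, the operator becomes multiplication by $\prod_{(n,\beta)\in S_{p}'}\exp\!\bigl(N^{\nu}_{n,\beta}q^{n}t^{-\beta}\bigr)^{\langle P,\beta\rangle}$ after the substitution $n\mapsto -n$ and the symmetry $N^{\nu}_{n,\beta}=N^{\nu}_{-n,\beta}$ from (\ref{rel:N}); the remaining factor $\Pi^{\nu}(\epsilon_{\cC^{\mu}_{f^{\ast}\omega}})$ identifies, via (\ref{Pi:DT}), with $\DT^{\nu,\mu}_{f^{\ast}\omega}(P)$. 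Extracting both cases $p=0,-1$ yields the two formulas. The main obstacle is the combinatorial sign bookkeeping: verifying that the interlocking factors $(-1)^{\epsilon(\nu)}$, $(-1)^{\chi(v_{1},v_{2})}$ and $(-1)^{P\cdot\beta-1}$ inside $\langle P,\beta\rangle$ conspire to produce the same uniform adjoint formula in both scenarios of Assumption~\ref{assume}; the analytic and Lie-theoretic input, by contrast, is automatic from (\ref{liehom}) and the abelianness of $\pGamma_{0}$.
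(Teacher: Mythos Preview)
Your proposal is correct and follows essentially the same route as the paper: rewrite the right-hand side of (\ref{form:delta2}) as $\exp(\epsilon_{\pF})\ast\exp(\epsilon_{\cC^{\mu}_{f^{\ast}\omega}})\ast\exp(-\epsilon_{\pF})$, take $\log$, apply $\Pi^{\nu}$ using (\ref{liehom}) and Baker--Campbell--Hausdorff to obtain $\sum_{m\ge 0}\tfrac{1}{m!}\mathrm{Ad}^{m}_{\Pi^{\nu}\epsilon_{\pF}}(\Pi^{\nu}\epsilon_{\cC^{\mu}_{f^{\ast}\omega}})$, and then unpack via (\ref{Pi:DT}), (\ref{eF}) and (\ref{rel:N}). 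One small point: your citation of Lemma~\ref{EinCoh} for ``$\epsilon_{\cC^{\mu}_{f^{\ast}\omega}}$ is supported on sheaves'' is a misattribution (that lemma asserts $\cC^{\mu}_{f^{\ast}\omega}\subset\pB^{\mu}_{f^{\ast}\omega}$; the sheaf property is immediate from the definition of $\cC^{\mu}_{f^{\ast}\omega}$), and for $\nu=\chi_B$ you should also note that $\pF\subset\Coh_0(X/Y)$ consists of sheaves, since (\ref{liehom}) requires both arguments to be supported on sheaves.
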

\begin{proof}
By the arguments so far, we have the following identities:
\begin{align*}
\sum_{v \in \pGamma_{f^{\ast}\omega}^{\mu}}
\pDT_{f^{\ast}\omega}^{\nu}(v) \cdot c_v &=
\Pi^{\nu}
\log \left( 
\delta_{\pB_{f^{\ast}\omega}^{\mu}} 
\ast \delta_{\pPPer_0(X/Y)[-1]}^{-1} \right) \\
&=
\Pi^{\nu} \log \left(\delta_{\pF} \ast \delta_{\cC_{f^{\ast}\omega}^{\mu}} \ast \delta_{\pF}^{-1}\right) \\
&=\Pi^{\nu}
\log \left( \exp \left(\epsilon_{\pF}\right) \ast 
\exp \left(\epsilon_{\cC_{f^{\ast}\omega}^{\mu}}\right) \ast
\exp \left(-\epsilon_{\pF}\right) \right) \\
&=\sum_{m \ge 0} 
\frac{1}{m!} \mathrm{Ad}^{\times m}_{\Pi^{\nu} \epsilon_{\pF}}
\left( \Pi^{\nu} \epsilon_{\cC_{f^{\ast}\omega}^{\mu}} \right). 
\end{align*}
Here the first equality is (\ref{def:hat}), 
the second equality is (\ref{form:delta2}), 
the third equality is (\ref{e-ele}), 
and the last equality follows from 
(\ref{liehom}) and the Baker-Campbell-Hausdorff formula. 
We note that we are able to apply (\ref{liehom})
since the objects in $\pF$ and $\cC_{f^{\ast}\omega}^{\mu}$ are sheaves. 
For instance, suppose that 
$p=0$ and $\nu=\chi_B$. 
Using (\ref{Pi:DT}) and (\ref{eF}), we obtain the 
following equality
\begin{align*}
\oDT_{f^{\ast}\omega}^{\nu}(P, \beta, n)=
&\sum_{m\ge 0} \frac{1}{m!}
\sum_{\begin{subarray}{c}
\beta_0 + \beta_1+ \cdots + \beta_m=\beta, \\
n_0 + n_1 + \cdots + n_m=n, \\
\beta_i >0, f_{\ast}\beta_i=0, n_i < 0, 1\le i\le m
\end{subarray}} \\
& 
\prod_{i=1}^m (-1)^{P \cdot \beta_i -1}(P \cdot \beta_i)
N_{n_i, \beta_i}^{\nu} \cdot \DT_{f^{\ast}\omega}^{\nu}(P, \beta_0, n_0). 
\end{align*}
Using (\ref{rel:N}), the desired formula 
follows from the above equality. The other cases are similarly discussed. 
\end{proof}
We define the following generating series: 
\begin{align}\label{def:DTgen}
\DT_{f^{\ast}\omega}^{\nu}(P) \cneq 
\sum_{\mu \in \mathbb{Q}} \DT_{f^{\ast}\omega}^{\nu, \mu}(P). 
\end{align}
Let $\psi_0$ and $\psi_1$ be the linear 
functions given in Lemma~\ref{lem:linear}. 
The following theorem is the main result of this subsection: 
\begin{thm}\label{thm:main}
We have the following formula:
\begin{align*}
\phi_{\ast} &\DT_{f^{\ast}\omega}^{\nu}(P) =q^{\psi_0(P)}t^{\psi_1(P)}
\DT_{f^{\dag \ast}\omega}^{\nu}(\phi_{\ast}P) \\
& \cdot \prod_{\begin{subarray}{c}
n> 0, \beta^{\dag}>0 \\
f^{\dag}_{\ast}\beta^{\dag}=0
\end{subarray}}
\exp 
\left( N_{n, \beta^{\dag}}^{\nu} q^n t^{\beta^{\dag}} 
\right)^{\langle \phi_{\ast}P, \beta^{\dag} \rangle}
\prod_{\begin{subarray}{c}
n\ge 0, \beta^{\dag}>0 \\
f^{\dag}_{\ast}\beta^{\dag}=0
\end{subarray}} 
\exp 
\left( N_{n, \beta^{\dag}}^{\nu} q^n 
t^{-\beta^{\dag}} \right)^{\langle \phi_{\ast}P, \beta^{\dag} \rangle}. 
\end{align*}
Here $\beta^{\dag}$ in the RHS are elements of $H_2(X^{\dag}/Y)$, 
and $\phi_{\ast}$ is the variable change 
$(n, \beta) \mapsto (n, \phi_{\ast}\beta)$. 
\end{thm}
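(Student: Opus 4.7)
The plan is to deduce Theorem~\ref{thm:main} from Proposition~\ref{prop:hat} by first establishing the key identity
\begin{equation*}
\oDT_{f^\ast\omega}^\nu(v) = \iDT_{f^{\dag\ast}\omega}^\nu(\Phi_\ast v)
\end{equation*}
of (\ref{oDTi}) in the introduction, then translating it into a relation between generating series, and finally reindexing the residual product. To establish the identity, Lemma~\ref{Phires} provides an equivalence $\Phi \colon \oB_{f^\ast\omega}^\mu \simto \iB_{f^{\dag\ast}\omega}^\mu$, and since $\Phi$ also carries $\oPPer_0(X/Y)$ to $\iPPer_0(X^\dag/Y)$ (Subsection~\ref{subsec:perv}), it induces an isomorphism of stack-theoretic Hall algebras sending $\delta_{\oB_{f^\ast\omega}^\mu}$ and $\delta_{\oPPer_0(X/Y)[-1]}$ to their counterparts on $X^\dag$. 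Under the Calabi-Yau assumption (for $\nu = \chi_B$) the Fourier-Mukai kernel $\oO_{X \times_Y X^\dag}$ preserves Behrend functions on the relevant moduli stacks, and for $\nu \equiv 1$ no such compatibility is needed. Applying the integration map $\Pi^\nu$ to the logarithm of (\ref{def:hat}) on both sides then gives the identity.

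Next I would sum the identity over $v = (P, -\beta, -n) \in \oGamma_{f^\ast\omega}^\mu$. Lemma~\ref{lem:linear} gives $\Phi_\ast v = (\phi_\ast P, -(\phi_\ast\beta - \psi_1(P)), -(n - \psi_0(P)))$, so after the variable change $(n, \beta) \mapsto (n, \phi_\ast\beta)$ and matching $q,t$-monomials one obtains
\begin{equation*}
\phi_\ast \oDT_{f^\ast\omega}^{\nu,\mu}(P) = q^{\psi_0(P)} t^{\psi_1(P)} \iDT_{f^{\dag\ast}\omega}^{\nu,\mu}(\phi_\ast P),
\end{equation*}
which persists after summing over $\mu$ since $\Phi$ preserves $\hat{\mu}_{f^\ast\omega}$ by Lemma~\ref{Phires}. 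Substituting the two formulas of Proposition~\ref{prop:hat} into both sides and solving for $\phi_\ast \DT_{f^\ast\omega}^\nu(P)$ yields the claimed shape, modulo the reindexing of the inverse of the $\phi_\ast$-transformed product originating from the $\oDT$ side.

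For this last step, Lemma~\ref{lem:cup} shows that $\beta > 0$ with $f_\ast\beta = 0$ is equivalent to $\beta^\dag \cneq -\phi_\ast\beta > 0$ with $f^\dag_\ast\beta^\dag = 0$, and that $P \cdot \beta = -\phi_\ast P \cdot \beta^\dag$; a direct check in both cases of (\ref{def:delta}) (for $\nu = \chi_B$ using the parity of $P \cdot \beta$) then produces $\langle P, \beta\rangle = -\langle \phi_\ast P, \beta^\dag\rangle$. Combined with the symmetry $N_{n,\beta}^\nu = N_{n,-\beta}^\nu$ from (\ref{rel:N}) and the $\Phi$-induced identification of $N_{n,\beta}^\nu$ on $X$ with $N_{n, \beta^\dag}^\nu$ on $X^\dag$ (established by the Hall-algebra argument of the first paragraph restricted to one-dimensional sheaves in the exceptional fibre, via Lemma~\ref{desc:F}), the sign in $\langle P, \beta\rangle = -\langle \phi_\ast P, \beta^\dag\rangle$ is exactly absorbed by the inverse on the $\oDT$ side, producing the first product on the right hand side of the statement with exponent $+\langle \phi_\ast P, \beta^\dag\rangle$. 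The main obstacle is this final bookkeeping step, in particular the sign compatibility in the $\nu = \chi_B$ case and the comparison of the $N$-invariants across the flop; once handled, Theorem~\ref{thm:main} follows.
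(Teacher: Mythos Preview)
Your proposal is correct and follows essentially the same route as the paper: establish the Hall-algebra isomorphism from Lemma~\ref{Phires}, deduce the identity $\phi_\ast \oDT_{f^\ast\omega}^{\nu,\mu}(P) = q^{\psi_0(P)} t^{\psi_1(P)} \iDT_{f^{\dag\ast}\omega}^{\nu,\mu}(\phi_\ast P)$ via the commutative diagram between integration maps and $\Phi_\ast^C$, substitute both formulas of Proposition~\ref{prop:hat}, and reindex using Lemma~\ref{lem:cup}. The one point where your argument differs is the identification $N_{n,\beta}^\nu = N_{n,\beta^\dag}^\nu$ across the flop: you propose to extract it from the same Hall-algebra machinery restricted to $\Coh_0(X/Y)$ via Lemma~\ref{desc:F}, whereas the paper simply invokes \cite[Theorem~5.6]{Tcurve2}; your sketch is plausible but note that $\Phi$ does not directly match $\mu_H$-semistable sheaves on the two sides (it matches the perverse hearts), so some additional bookkeeping of the type in \cite{Tcurve2} is genuinely needed there.
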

\begin{proof}
By Lemma~\ref{Phires}, 
there is an isomorphism of algebras
\begin{align*}
\Phi_{\ast}^{H} \colon \widehat{H}(\oB_{f^{\ast}\omega}^{\mu})
\stackrel{\cong}{\to}
\widehat{H}(\iB_{f^{\dag \ast}\omega}^{\mu}). 
\end{align*}
By the commutative diagram (\ref{com:dia}), 
we have the commutative diagram
\begin{align*}
\xymatrix{
\widehat{H}^{\rm{Lie}}(\oB_{f^{\ast}\omega}^{\mu}) \ar[r]^{\Phi_{\ast}^H}
 \ar[d]_{\Pi^{\nu}} & 
\widehat{H}^{\rm{Lie}}(\iB_{f^{\dag \ast}\omega}^{\mu}) \ar[d]^{\Pi^{\nu}} \\
\widehat{C}(\oGamma_{f^{\ast}\omega}^{\mu}) \ar[r]^{\Phi_{\ast}^C} & 
\widehat{C}(\iGamma_{f^{\dag \ast}\omega}^{\mu}).
}
\end{align*}
Here $\Phi_{\ast}^C$ sends
$c_{v}$ to $c_{\Phi_{\ast}v}$. 
Moreover we have 
\begin{align*}
\Phi_{\ast}^H \left(\delta_{\oB_{f^{\ast}\omega}^{\mu}}\right)
=\delta_{\iB_{f^{\dag \ast}\omega}^{\mu}}, \ 
\Phi_{\ast}^H \left(\delta_{\oPPer_0(X/Y)[-1]}\right)=
\delta_{\iPPer_0(X^{\dag}/Y)[-1]}.
\end{align*}
Therefore we obtain the equality
\begin{align*}
\Phi_{\ast}^C \Pi^{\nu} \log 
\left( \delta_{\oB_{f^{\ast}\omega}^{\mu}} \ast 
 \delta_{\oPPer_0(X/Y)[-1]}^{-1} \right)
=\Pi^{\nu} \log 
\left( \delta_{\iB_{f^{\dag \ast}\omega}^{\mu}} \ast 
 \delta_{\iPPer_0(X^{\dag}/Y)[-1]}^{-1} \right).
\end{align*}
Using Lemma~\ref{lem:linear}, 
the above equality implies
\begin{align}\label{equal:hat}
\phi_{\ast}\oDT_{f^{\ast}\omega}^{\nu, \mu}(P)
=q^{\psi_0(P)} t^{\psi_1(P)} \cdot
\iDT_{f^{\dag \ast}\omega}^{\nu, \mu}(\phi_{\ast}P). 
\end{align}
The desired formula follows from the above equality
and Proposition~\ref{prop:hat}, 
together noting that 
\begin{align*}
\phi_{\ast}\prod_{\begin{subarray}{c}
n> 0, \beta>0 \\
f_{\ast}\beta=0
\end{subarray}}
\exp \left( N_{n, \beta}^{\nu}
q^n t^{-\beta} \right)^{\langle P, \beta \rangle}
=\prod_{\begin{subarray}{c}
n> 0, \beta^{\dag}>0\\
f_{\ast}^{\dag}\beta^{\dag}=0
\end{subarray}}
\exp \left( N_{n, \beta^{\dag}}^{\nu}
q^n t^{\beta^{\dag}} \right)^{-\langle \phi_{\ast}P, \beta^{\dag} \rangle}. 
\end{align*}
Here we have set $\beta^{\dag}=-\phi_{\ast}\beta \in H_2(X^{\dag}/Y)$, 
which is 
effective by Lemma~\ref{lem:cup}. 
We have also used the fact 
$N_{n, \beta}^{\nu}=N_{n, \beta^{\dag}}^{\nu}$ 
by~\cite[Theorem~5.6]{Tcurve2}, 
and $\langle P, \beta \rangle= -\langle \phi_{\ast}P, \beta^{\dag} \rangle$
by Lemma~\ref{lem:cup}. 
\end{proof}

\subsection{Parabolic stable pairs}
The error term of the formula in Theorem~\ref{thm:main}
is described in terms of invariants counting 
parabolic stable pairs, introduced in~\cite{Todpara}. 
Let $H$ be an $f$-ample effective divisor in $X$, 
which intersects with $\Ex(f)$ transversally. 
Recall that there is the $\mu_H$-stability on 
$\Coh_{0}(X/Y)$ given in Subsection~\ref{subsec:perv}. 
\begin{defi}
An $H$-parabolic stable pair consists of 
$(F, s)$,
where $F\in \Coh_{0}(X/Y)$ and 
$s \in F \otimes \oO_H$, 
 satisfying the following conditions:
\begin{itemize}
\item 
$F$ is a one dimensional $\mu_H$-semistable sheaf. 
\item  
For any surjection $\pi \colon 
F \twoheadrightarrow F'$
with $\mu_{H}(F)=\mu_H(F')$, we have 
$(\pi \otimes \oO_H)(s) \neq 0$. 
\end{itemize}
\end{defi}
For $\beta\in H_2(X/Y)$ and $n \in \mathbb{Z}$, let 
$M_{H}^{\rm{par}}(\beta, n)$ be the moduli 
space of parabolic stable pairs $(F, s)$
with $[F]=\beta$ and $\chi(F)=n$.
The 
moduli space $M_{H}^{\rm{par}}(\beta, n)$
is a projective scheme, 
which represents the functor 
of families of above parabolic stable pairs (cf.~\cite[Theorem~1.2]{Todpara}).
\begin{rmk}
In~\cite{Todpara}, we assumed the ampleness of $H$. 
However 
since any $F \in \Coh_{\le 1}(X)$ with 
$[F]=\beta \in H_2(X/Y)$ is an object in $\Coh_0(X/Y)$, 
the $f$-ampleness of $H$ is enough to prove
the existence of $M_{H}^{\rm{par}}(\beta, n)$. 
\end{rmk}
As in Assumption~\ref{assume}, let $\nu$ be a constructible function on
$M_H^{\rm{par}}(\beta, n)$ which is either 
$\nu=\chi_B$ or $\nu \equiv 1$.
The associated invariant of parabolic stable pairs
 is defined similarly to the
DT theory: 
\begin{defi}
For $\beta \in H_2(X/Y)$ and $n \in \mathbb{Z}$, we 
define the invariant 
$\mathrm{Par}_H^{\nu}(\beta, n) \in \mathbb{Z}$
to be 
\begin{align*}
\mathrm{Par}_H^{\nu}(\beta, n) \cneq 
\sum_{k \in \mathbb{Z}}k \cdot \chi(\nu^{-1}(k)).
\end{align*}
\end{defi}
We note that, contrary to the invariants $N_{n, \beta}^{\nu}$, 
the invariants $\mathrm{Par}_{H}^{\nu}(\beta, n)$ are always 
integers. However they are related as follows: 
for $\mu \in \mathbb{Q}$, 
let us set 
\begin{align*}
\mathrm{Par}_H^{\nu, \mu}(q, t) \cneq 1+
\sum_{\begin{subarray}{c}\frac{n}{\beta \cdot H} =\mu \\
f_{\ast} \beta=0, \beta>0 
\end{subarray}} \mathrm{Par}_H^{\nu}(\beta, n)q^n t^{\beta}. 
\end{align*}
Then we have the following identity (cf.~\cite[Theorem~4.12]{Todpara})
\begin{align}\label{Par=N}
\mathrm{Par}_H^{\nu, \mu}(q, t)
=\prod_{\begin{subarray}{c}
\frac{n}{\beta \cdot H} =\mu \\
f_{\ast} \beta=0, \beta>0 \end{subarray}}
\exp \left( N_{n, \beta}^{\nu} q^n t^{\beta}
  \right)^{\langle H, \beta \rangle}. 
\end{align}
For a divisor class $P$ on $X$, 
it is either 
$f$-ample, $f$-trivial, or $f$-anti ample
by the definition of a flopping contraction.  
We set $s=1$ in the first case, $s=0$ in the second case
and $s=-1$ in the last case. 
By the $f$-relative 
base point free theorem, 
there is an $f$-ample effective divisor $H$ on $X$
which intersects with $\Ex(f)$ transversally 
such that $P \cdot \beta=sH \cdot \beta$
for any $\beta\in H_2(X/Y)$. 
This implies that
\begin{align*}
\{\mathrm{Par}_H^{\nu, \mu}(q, t)\}^{s}
=\prod_{\begin{subarray}{c}
\frac{n}{\beta \cdot H} =\mu \\
f_{\ast} \beta=0, \beta>0 \end{subarray}}
\exp \left( N_{n, \beta}^{\nu} q^n t^{\beta}
  \right)^{\langle P, \beta \rangle}. 
\end{align*}
Therefore we obtain the following corollary
of Theorem~\ref{thm:main}:
\begin{cor}\label{cor:para}
In the same situation of Theorem~\ref{thm:main}, 
there is an $f^{\dag}$-ample effective divisor $H^{\dag}$
on $X^{\dag}$ which intersects with $\Ex(f^{\dag})$ transversally, 
and $s \in \{1, 0, -1\}$
such that the following identity holds: 
\begin{align*}
\phi_{\ast} \DT_{f^{\ast}\omega}^{\nu}(P) &=q^{\psi_0(P)}t^{\psi_1(P)}
\DT_{f^{\dag \ast}\omega}^{\nu}(\phi_{\ast}P) \\
&\hspace{20mm} \cdot \left\{\prod_{\mu \in \mathbb{Q}_{>0}} 
\mathrm{Par}_{H^{\dag}}^{\nu, \mu}(q, t)
\prod_{\mu\in \mathbb{Q}_{\ge 0}} 
\mathrm{Par}_{H^{\dag}}^{\nu, \mu}(q, t^{-1}) \right\}^{s}. 
\end{align*}
Here $s$ 
is determined by $\phi_{\ast}P \cdot \beta^{\dag}=sH^{\dag} \cdot \beta^{\dag}$
for any $\beta^{\dag} \in H_2(X^{\dag}/Y)$. 
\end{cor}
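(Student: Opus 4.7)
The plan is to deduce the corollary from Theorem~\ref{thm:main} by repackaging the two infinite products over $(n,\beta^{\dag})$ as generating series of parabolic pair invariants via the identity (\ref{Par=N}). Theorem~\ref{thm:main} already exhibits $\phi_{\ast}\DT_{f^{\ast}\omega}^{\nu}(P)$ as $q^{\psi_0(P)}t^{\psi_1(P)}\DT_{f^{\dag\ast}\omega}^{\nu}(\phi_{\ast}P)$ multiplied by two products whose exponents are $\langle\phi_{\ast}P,\beta^{\dag}\rangle$, so the aim is to rewrite each slope piece of these products as a factor of $\mathrm{Par}_{H^{\dag}}^{\nu,\mu}(q,t^{\pm 1})$.

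The first step is to choose the divisor $H^{\dag}$ and the sign $s$. Because $f^{\dag}\colon X^{\dag}\to Y$ is a flopping contraction, its relative Picard number is one, so the class $\phi_{\ast}P$ is either $f^{\dag}$-ample, $f^{\dag}$-trivial, or $f^{\dag}$-anti-ample; I set $s\in\{1,0,-1\}$ accordingly. When $s\neq 0$, the $f^{\dag}$-relative base point free theorem produces an effective $f^{\dag}$-ample divisor $H^{\dag}$ meeting $\Ex(f^{\dag})$ transversely and numerically proportional to $s\phi_{\ast}P$ over $Y$, so that $\phi_{\ast}P\cdot\beta^{\dag}=s\,H^{\dag}\cdot\beta^{\dag}$ for every $\beta^{\dag}\in H_{2}(X^{\dag}/Y)$. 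When $s=0$, every such intersection vanishes, $\langle\phi_{\ast}P,\beta^{\dag}\rangle=0$, both products in Theorem~\ref{thm:main} collapse to $1$, and the identity is immediate with any choice of $H^{\dag}$.

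Assuming $s\neq 0$, I would next verify the sign identity
\[
\langle\phi_{\ast}P,\beta^{\dag}\rangle \;=\; s\,\langle H^{\dag},\beta^{\dag}\rangle
\]
for all $\beta^{\dag}\in H_{2}(X^{\dag}/Y)$. For $\nu\equiv 1$ this is immediate from (\ref{def:delta}); for $\nu=\chi_{B}$ it reduces to a short parity computation using $s=\pm 1$ together with the definition (\ref{def:delta}). Raising (\ref{Par=N}) to the power $s$ then gives
\[
\bigl(\mathrm{Par}_{H^{\dag}}^{\nu,\mu}(q,t)\bigr)^{s}
=\prod_{\substack{n/\beta^{\dag}\cdot H^{\dag}=\mu\\ f^{\dag}_{\ast}\beta^{\dag}=0,\ \beta^{\dag}>0}}
\exp\!\bigl(N_{n,\beta^{\dag}}^{\nu}q^{n}t^{\beta^{\dag}}\bigr)^{\langle\phi_{\ast}P,\beta^{\dag}\rangle},
\]
and taking the product over $\mu>0$ reconstructs the first error factor in Theorem~\ref{thm:main}. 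Substituting $t\mapsto t^{-1}$ and taking the product over $\mu\ge 0$ reconstructs the second. The slope ranges $\mu>0$ and $\mu\ge 0$ line up, respectively, with the conditions $n>0$ and $n\ge 0$ in Theorem~\ref{thm:main} because $H^{\dag}\cdot\beta^{\dag}>0$ whenever $\beta^{\dag}>0$ with $f^{\dag}_{\ast}\beta^{\dag}=0$, $H^{\dag}$ being $f^{\dag}$-ample.

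This is essentially a bookkeeping argument once Theorem~\ref{thm:main} and (\ref{Par=N}) are in hand. The only mildly delicate point is the sign identity in the case $\nu=\chi_{B}$ with $s=-1$, which reduces to a single parity check, so I do not anticipate any serious obstacle.
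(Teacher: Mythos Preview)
Your proposal is correct and follows essentially the same route as the paper: the paper's argument (given in the paragraph immediately preceding the corollary) also chooses $s$ according to whether the divisor class is $f^{\dag}$-ample, trivial, or anti-ample, invokes the relative base point free theorem to produce $H^{\dag}$ with $\phi_{\ast}P\cdot\beta^{\dag}=sH^{\dag}\cdot\beta^{\dag}$, and then raises (\ref{Par=N}) to the power $s$ to rewrite the error products in Theorem~\ref{thm:main}. Your write-up is in fact slightly more explicit than the paper's, spelling out the $s=0$ case and the parity check for $\nu=\chi_B$, but the underlying argument is the same.
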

\subsection{Computation of the error term}
From this subsection until the end of this section, 
we assume that
$f$ contracts only an irreducible 
rational curve $C \subset X$ to 
a point $p\in Y$. 
We compute the error term 
of the formula
in Corollary~\ref{cor:para} 
under the above assumption. 
Let $H$ be an effective $f$-ample divisor on $X$
which intersects with $C$ transversally. 
As in Subsection~\ref{subsec:flop}, 
we denote by $l$ the scheme theoretic length of $f^{-1}(p)$ at the 
generic point of $C$, and 
$n_{j}$ for $j \in \mathbb{Z}_{\ge 1}$ the integers
in Definition~\ref{def:nj}.  
\begin{lem}\label{lem:com:N}
If $\nu=\chi_B$, we have the following formula
\footnote{We warn the readers not to confuse the 
use of `$n$' in the formula.
The `$n$' without subscript stands for
the third Chern character of a sheaf, while 
$`n_{m/k}'$ stands for 
the integers in Definition~\ref{def:nj}.} for $m\ge 1$:
\begin{align}\notag
N_{n, m[C]}^{\nu}
=\sum_{k\ge 1, k|(n, m)}\frac{n_{m/k}}{k^2}. 
\end{align}
\end{lem}
\begin{proof}
The 
invariant $N_{n, m[C]}^{\nu}$ for $m>0$ depends only on 
the formal completion (\ref{complet}) 
as any object which contributes to $N_{n, m[C]}^{\nu}$ is supported on $C$. 
The moduli space of Joyce-Song~\cite{JS} 
stable pairs $(\oO_{\widehat{X}} \to F(aH))$
for $a\gg 0$ on $\widehat{X}$
 is a projective scheme if 
$[F]=m[C]$, 
since the latter condition implies that $F$ is supported on $C$. 
Hence the same argument of~\cite{JS} shows that 
 $N_{n, m[C]}^{\nu}$ is invariant 
under the deformation (\ref{deform}) of $\widehat{X}$. 
Let $C_{j, k} \subset \widehat{\xX}_t$ be 
$(-1, -1)$-curves as in Subsection~\ref{subsec:flop}. 
The invariance of $N_{n, \beta}^{\nu}$ under the deformation (\ref{deform})
yields
\begin{align*}
N_{n, m[C]}^{\nu}
=\sum_{\begin{subarray}{c}
\sum_{1\le j\le l, 1\le k\le n_j} j m_{j, k}=m 
\end{subarray}}N_{n, \sum_{1\le j\le l, 1\le k\le n_j}
 m_{j, k}[C_{j, k}]}^{\nu}.
\end{align*}
The invariant
$N_{n, \sum_{1\le j\le l, 1\le k\le n_j}
 m_{j, k}[C_{j, k}]}^{\nu}$
in the RHS is non-zero
only if $m_{j, k} \neq 0$ for only one 
$(j, k)$ since $C_{j, k}$ are disjoint, 
and we have $j|m$, $m_{j, k}=m/j$ for such 
$(j, k)$. 
Since $C_{j, k}$ is a $(-1, -1)$-curve, 
the computation in~\cite[Example~6.2]{JS}
shows that
$N_{n, m_{j, k}[C_{j, k}]}^{\nu}$
is non-zero only if 
$m_{j, k}|n$, and it equals to 
$1/m_{j, k}^2=j^2/m^2$ in this case. 
Therefore we obtain 
\begin{align*}
N_{n, m[C]}^{\nu}=
\sum_{j\ge 1, j|m, (m/j)|n}
n_j
\frac{j^2}{m^2}. 
\end{align*}
 By putting $k=m/j$, we obtain the desired identity.
\end{proof}
\begin{prop}\label{prop:comp}
If $\nu=\chi_B$, 
we have the following formula:
\begin{align*}
\mathrm{Par}_{H}^{\nu, \mu}
(q, t)=
\prod_{\begin{subarray}{c}
1\le j\le l, \\ \frac{n}{j(H \cdot C)}=\mu
\end{subarray}}
\left(1-(-1)^{jH \cdot C} q^n t^{jC} \right)^{jn_j(H\cdot C)}. 
\end{align*}
\end{prop}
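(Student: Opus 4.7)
The strategy is to substitute the explicit formula of Lemma~\ref{lem:com:N} into the product formula (\ref{Par=N}), and then verify the claimed factorization by comparing logarithms. Since $f$ contracts only the irreducible rational curve $C$, every effective $\beta\in H_2(X/Y)$ is of the form $m[C]$ for a unique $m\in\mathbb{Z}_{\ge 1}$. With $P=H$ and $\nu=\chi_B$, we have $\langle H,m[C]\rangle=(-1)^{mH\cdot C-1}m(H\cdot C)$, so (\ref{Par=N}) becomes
\begin{align*}
\log \mathrm{Par}_H^{\nu,\mu}(q,t)
=\sum_{\begin{subarray}{c}m\ge 1,\ n\in\mathbb{Z}\\ n/(m H\cdot C)=\mu\end{subarray}}
(-1)^{mH\cdot C-1}m(H\cdot C)\,N_{n,m[C]}^{\nu}\,q^{n}t^{m[C]}.
\end{align*}

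Next, I would insert Lemma~\ref{lem:com:N}, namely $N_{n,m[C]}^{\nu}=\sum_{k\mid(n,m)} n_{m/k}/k^{2}$, and perform the reindexing $m=jk$, $n=kn'$, under which the divisibility condition $k\mid(n,m)$ becomes automatic and the slope constraint $n/(m H\cdot C)=\mu$ translates to $n'/(jH\cdot C)=\mu$. The resulting triple sum is
\begin{align*}
\log\mathrm{Par}_H^{\nu,\mu}(q,t)
=\sum_{j,k\ge 1}\sum_{n'/(jH\cdot C)=\mu}
\frac{(-1)^{jkH\cdot C-1}\,j\,n_{j}(H\cdot C)}{k}\,q^{kn'}t^{jk[C]},
\end{align*}
where the sum over $j$ is effectively truncated to $1\le j\le l$ by our convention $n_{j}=0$ for $j>l$.

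Finally, I would expand the logarithm of the right-hand side of the claim using $\log(1-x)=-\sum_{k\ge 1}x^{k}/k$: for each fixed $j$ and $n$ with $n/(jH\cdot C)=\mu$, the factor $(1-(-1)^{jH\cdot C}q^{n}t^{jC})^{jn_{j}(H\cdot C)}$ contributes $\sum_{k\ge 1}\frac{(-1)^{jkH\cdot C-1}jn_{j}(H\cdot C)}{k}q^{kn}t^{jkC}$ to the logarithm. This matches the expression above term by term (with $n=n'$), which proves the identity.

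The argument is essentially bookkeeping, so there is no serious obstacle; the only delicate point is making sure the signs $(-1)^{jkH\cdot C-1}$ produced by the two sides match, which is guaranteed once one notes that $(-1)^{jH\cdot C}$ raised to the $k$-th power gives $(-1)^{jkH\cdot C}$ and that the $\langle H,m[C]\rangle$ sign in (\ref{Par=N}) for $m=jk$ is exactly $(-1)^{jkH\cdot C-1}$.
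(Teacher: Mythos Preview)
Your argument is correct. Both proofs rely on Lemma~\ref{lem:com:N} and on the identity (\ref{Par=N}), but they diverge in how they close the gap. The paper invokes an external equivalence \cite[Proposition~4.5]{Todpara}: the multiple cover formula $N_{n,\beta}^{\nu}=\sum_{k\mid(n,\beta)}k^{-2}N_{1,\beta/k}^{\nu}$ holds if and only if the product formula (\ref{form:Par}) holds; since Lemma~\ref{lem:com:N} gives $N_{1,j[C]}^{\nu}=n_{j}$ and simultaneously verifies the multiple cover identity, the product formula follows. You instead unroll that equivalence by hand: taking logarithms of both sides of (\ref{Par=N}) and of the claimed formula, substituting Lemma~\ref{lem:com:N}, and reindexing $m=jk$, $n=kn'$ to match terms. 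Your route is more elementary and self-contained (no appeal to \cite{Todpara}), while the paper's is shorter by packaging the log computation into a cited result; the underlying combinatorics is the same.
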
 
\begin{proof}
By~\cite[Proposition~4.5]{Todpara}, the
 invariants $N_{n, \beta}^{\nu}$ 
satisfy the
following multiple cover 
formula
\begin{align}\label{mult}
N_{n, \beta}^{\nu}=\sum_{k\in \mathbb{Z}_{\ge 1}, k|(n, \beta)}
\frac{1}{k^2} N_{1, \beta/k}^{\nu}
\end{align}
if and only if we have the following formula:
\begin{align}\label{form:Par}
\mathrm{Par}_{H}^{\nu, \mu}
(q, t)=
\prod_{\begin{subarray}{c}
j\ge 1, \\ \frac{n}{j(H \cdot C)}=\mu
\end{subarray}}
\left(1-(-1)^{j(H \cdot C)} q^n t^{jC}
\right)^{j N_{1, j[C]}^{\nu}(H \cdot C)}. 
\end{align}
By Lemma~\ref{lem:com:N}, 
$N_{1, j[C]}^{\nu}=n_{j}$
(which also holds from~\cite{Katz}), and 
the identity (\ref{mult}) holds 
for any $\beta=j[C]$ with $j>0$. 
Therefore we obtain the desired formula. 
\end{proof}
\subsection{Proof of Theorem~\ref{thm:intro}}\label{subsec:proof}
The following theorem, which proves Theorem~\ref{thm:intro}, is the 
main result in this section: 
\begin{thm}\label{thm:intro2}
Suppose that $\nu=\chi_B$. 
Then  
we have the following formula: 
\begin{align*}
&\DT_{f^{\dag \ast} \omega}^{\nu}(\phi_{\ast}P)
=\phi_{\ast}\DT_{f^{\ast}\omega}^{\nu}(P)
 \\
&\hspace{20mm}\cdot 
\prod_{j=1}^{l} \left\{ 
i^{jP \cdot C-1}
\eta(q)^{-1}\vartheta_{1, 1}(q, ((-1)^{\phi_{\ast}P}t)^{jC^{\dag}}) \right\}^{jn_j P \cdot C}. 
\end{align*}
\end{thm}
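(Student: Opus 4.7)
The plan is to derive this from Theorem~\ref{thm:main} (together with Corollary~\ref{cor:para}) by explicitly computing the error factors when $f$ contracts a single irreducible rational curve, and then repackaging the resulting infinite products into the classical product expansion of $\eta(q)^{-1}\vartheta_{1,1}(q,t)$.

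First I would specialize Theorem~\ref{thm:main} to the present situation. Since the exceptional locus of $f^{\dag}$ is the single rational curve $C^{\dag}$, every effective class $\beta^{\dag}\in H_2(X^{\dag}/Y)$ equals $m[C^{\dag}]$ for some $m\ge1$. Thus the two infinite products over $\beta^{\dag}$ collapse to products over $(n,m)\in\mathbb{Z}_{\ge0}\times\mathbb{Z}_{\ge1}$ of exponentials of $N_{n,m[C^{\dag}]}^{\nu}$. Using Lemma~\ref{lem:cup} the exponent $\langle\phi_{\ast}P,m[C^{\dag}]\rangle$ becomes $(-1)^{m(P\cdot C)-1}\cdot m(P\cdot C)$, and the sign $(-1)^{\phi_{\ast}P}$ appearing in $\vartheta_{1,1}$ will match up with this $(-1)^{m(P\cdot C)-1}$ factor.

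Next I would compute $N_{n,m[C^{\dag}]}^{\nu}$ using Lemma~\ref{lem:com:N}, which gives $N_{n,m[C^{\dag}]}^{\nu}=\sum_{k|(n,m)}n_{m/k}/k^2$. Substituting this into the Taylor expansion $\exp(\cdots)$ and rearranging the sum over $(n,m,k)$ by setting $j=m/k$ and reindexing, I expect the error factor to factor as a product over $1\le j\le l$ of an expression of the form
\begin{equation*}
\prod_{n\ge 0,\,r\ge 1}(1-\zeta q^{n}t^{jC^{\dag}})^{\alpha_{n,r}}\cdot\prod_{n>0,\,r\ge 1}(1-\zeta q^{n}t^{-jC^{\dag}})^{\alpha_{n,r}},
\end{equation*}
raised to the power $jn_j(P\cdot C)$, where $\zeta=(-1)^{j\phi_{\ast}P\cdot C^{\dag}}$. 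Collecting the double products should produce exactly the product expansion
\begin{equation*}
\eta(q)^{-1}\vartheta_{1,1}(q,t)=-iq^{1/12}t^{1/2}\prod_{m\ge 1}(1-q^{m}t)(1-q^{m-1}t^{-1}),
\end{equation*}
with the variable $t$ replaced by $((-1)^{\phi_{\ast}P}t)^{jC^{\dag}}$.

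The remaining step is to match the prefactors. The product expansion of $\eta^{-1}\vartheta_{1,1}$ produces a factor $-iq^{1/12}t^{1/2}$, and raising to the power $jn_j(P\cdot C)$ gives an exponent of $q^{-\psi_0(P)}$ and $t^{-\psi_1(P)}$ on the nose by Proposition~\ref{lem:psi}, which cancels the prefactor $q^{\psi_0(P)}t^{\psi_1(P)}$ appearing in Theorem~\ref{thm:main}. The sign $-i$ is absorbed into $i^{jP\cdot C-1}$ after using the identity $(-i)^{jn_j P\cdot C}=i^{-jn_j P\cdot C}$ and the overall $(-1)$ produced by moving $\phi_{\ast}\DT_{f^{\ast}\omega}^{\nu}(P)$ to the right-hand side.

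The main obstacle is bookkeeping: one must carefully track the signs $(-1)^{P\cdot\beta-1}$ contained in the brackets $\langle P,\beta\rangle$ (defined in (\ref{def:delta})), the $(-1)^{\phi_{\ast}P}$ twist in $\vartheta_{1,1}$, and the half-integer exponents $q^{1/12}t^{1/2}$ arising from the theta/eta product expansion, and verify that they all combine correctly with Proposition~\ref{lem:psi} so that the prefactor $q^{\psi_0(P)}t^{\psi_1(P)}$ on one side matches the $q^{-\psi_0(P)}t^{-\psi_1(P)}$ produced by the other. Once the sign and prefactor accounting is done, the identity follows directly by substituting the product expansion of $\eta^{-1}\vartheta_{1,1}$ into the formula from Theorem~\ref{thm:main}.
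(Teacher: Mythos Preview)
Your approach is correct and essentially the same as the paper's: the paper also starts from Theorem~\ref{thm:main}, invokes Proposition~\ref{lem:psi} for the prefactor $q^{\psi_0(P)}t^{\psi_1(P)}$, computes the error product via the multiple cover formula of Lemma~\ref{lem:com:N} (packaged there as Proposition~\ref{prop:comp} through the parabolic stable pair identity~(\ref{Par=N}), but with identical content), and then recognizes the result as $\eta(q)^{-1}\vartheta_{1,1}$ via the Jacobi triple product~(\ref{theta:pro}). The only difference is that you work directly with the $N_{n,\beta}^{\nu}$ rather than passing through the parabolic generating series $\mathrm{Par}_{H^{\dag}}^{\nu,\mu}$, which is a purely cosmetic shortcut; the bookkeeping you flag (signs in $\langle P,\beta\rangle$, the $(-1)^{\phi_{\ast}P}$ twist, and the $q^{1/12}t^{1/2}$ from~(\ref{theta:pro})) is exactly what the paper handles in the displayed computation in the proof of Theorem~\ref{thm:intro2}.
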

\begin{proof}
By Proposition~\ref{lem:psi},  
Corollary~\ref{cor:para}
and Proposition~\ref{prop:comp}, we have
\begin{align*}
&\phi_{\ast} \DT_{f^{\ast}\omega}^{\nu}(P) 
=q^{-\sum_{j=1}^{l} jn_j(P\cdot C)/12}t^{-\sum_{j=1}^{l}j^2 n_j(P \cdot C)C^{\dag}/2}
\DT_{f^{\dag \ast}\omega}^{\nu}(\phi_{\ast}P) \\
& \cdot \prod_{j=1}^{l}
\left\{ \prod_{n>0}
 (1-(-1)^{j(H^{\dag} \cdot C^{\dag})} q^n t^{jC^{\dag}})
 \prod_{n\ge 0}
 (1-(-1)^{j(H^{\dag} \cdot C^{\dag})} q^n t^{-jC^{\dag}})
 \right\}^{sjn_j(H^{\dag} \cdot C^{\dag})}.
\end{align*}
The result of Theorem~\ref{thm:intro} 
follows from the above identity
together with 
$\phi_{\ast}P \cdot C^{\dag}=sH^{\dag} \cdot C^{\dag}=-P \cdot C$
by Lemma~\ref{lem:cup}, and 
the Jacobi triple product 
\begin{align}\label{theta:pro}
\vartheta_{1, 1}(q, t)=iq^{\frac{1}{12}}t^{\frac{1}{2}}
\eta(q) (1-t^{-1}) \prod_{n\ge 1}(1-q^n t) (1-q^n t^{-1}). 
\end{align}
\end{proof}
\begin{rmk}
If $l=1$, then $n_1=n$ where $n$ is the width of $C$, 
and we obtain the following formula:
\begin{align*}
\DT_{f^{\dag \ast}\omega}^{\nu}(\phi_{\ast}P) =
\phi_{\ast}\DT_{f^{\ast}\omega}^{\nu}(P) 
 \cdot \left\{i^{P \cdot C-1}\eta(q)^{-1}   
\vartheta_{1, 1}(q, ((-1)^{\phi_{\ast}P}t)^{C^{\dag}}) \right\}^{nP \cdot C}.
\end{align*}
\end{rmk}
\begin{rmk}\label{rmk:Mukai}
If we take the generating series (\ref{def:DTgen}) w.r.t. the Chern characters 
(not Mukai vectors), then the error term in Theorem~\ref{thm:intro2}
is further multiplied by some monomial in $q$, $t$, which is not
a Jacobi form. This indicates that taking the generating series 
w.r.t. the Mukai vectors is crucial in order to obtain the 
modularity of the generating series. 
\end{rmk}
\subsection{Euler characteristic version}
We next treat the case $\nu \equiv 1$. 
In this case, the invariant $N_{n, \beta}^{\nu}$ is not 
deformation invariant and its computation is more subtle. 
We focus on the case of $l=1$, and 
let $n$ be the width of $C$. 
We take a smooth divisor $\widehat{H}$ in 
$\widehat{X}$ 
such that $\widehat{H} \cap C$ consists of one point, 
which always exist. 
Also we set the polynomial $f_n(x)$ to be
\begin{align}\label{fn}
f_n(x) \cneq 1+ x+ \cdots + x^n. 
\end{align}
We have the following lemma: 
\begin{lem}\label{lem:compv}
In the above situation, we have 
$\mathrm{Par}_{\widehat{H}}^{\nu, \mu}(q, t)=1$ unless 
$\mu \in \mathbb{Z}$. 
If $\mu \in \mathbb{Z}$, we have
\begin{align*}
\mathrm{Par}_{\widehat{H}}^{\nu, \mu}(q, t)=f_{n}(q^{\mu}t^{C^{\dag}}). 
\end{align*}
\end{lem}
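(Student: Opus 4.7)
The plan is to prove Lemma~\ref{lem:compv} by direct classification of parabolic stable pairs $(F, s)$ on the formal completion $\widehat{X}$. Since any pair contributing to $\mathrm{Par}_{\widehat{H}}^{\nu, \mu}$ has $F$ scheme-theoretically supported on $\widehat{f}^{-1}(\widehat{p})$, hence on $C$, the problem reduces to a local computation on $\widehat{X}$. Because $l = 1$, the formal neighborhood $\widehat{Y}$ is the $cA_n$-singularity of Example~\ref{exam}, giving an explicit handle on the geometry; the choice of a smooth $\widehat{H}$ meeting $C$ transversally at a single point $x_0$ means the parabolic section $s$ lives in a finite-dimensional vector space concentrated at $x_0$.

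First I would show $\mu \in \mathbb{Z}$ is necessary. Since $[F] = m[C]$ for some $m \geq 0$ and $\widehat{H} \cdot [F] = m$, the slope is $\mu_{\widehat{H}}(F) = \chi(F)/m \in \tfrac{1}{m}\mathbb{Z}$; if $m\mu \notin \mathbb{Z}$ the moduli is empty. If $\mu = a/m$ with $\gcd(a, m) < m$, any $\mu_{\widehat{H}}$-semistable $F$ must be built from Jordan--H\"older factors of class $m_i[C]$ with common slope $\mu$ and $m_i \mu \in \mathbb{Z}$, which forces $m_i > 1$; I would then show that the resulting moduli of parabolic stable pairs admits a free $\mathbb{C}^*$-action (scaling an extension parameter, after rigidifying the $\mathbb{C}^*$ absorbed by $s$ and by $\Aut(F)$), so its naive Euler characteristic vanishes.

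Next, for $\mu \in \mathbb{Z}$, I would classify parabolic stable pairs with $[F] = m[C]$ directly. The width $n$ of $C$ controls which thickenings of $C$ lie inside $\widehat{X}$: by the definition of width as the multiplicity of the Hilbert scheme of subschemes of class $[C]$ at the point $[C]$, a canonical length-$m$ thickening of $C$ exists in $\widehat{X}$ precisely for $0 \leq m \leq n$. For each such $m$, one can build a canonical pure one-dimensional sheaf $F$ with $\chi(F) = m\mu$ by twisting the appropriate thickening, and the parabolic stability condition (combined with the fact that the only slope-$\mu$ quotient of such an $F$ is $F$ itself) forces $s$ to be nonzero, uniquely determined up to the $\mathbb{C}^*$ absorbed by $\Aut(F)$. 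Thus each $(m, \mu)$ with $0 \leq m \leq n$ and $\mu \in \mathbb{Z}$ contributes Euler characteristic $1$, yielding the monomial $(q^\mu t^{C^{\dag}})^m$ and hence the polynomial $f_n(q^\mu t^{C^{\dag}})$.

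The main obstacle is verifying that the moduli of parabolic stable pairs for each admissible $(m, \mu)$ really collapses to a single point, and that for $m > n$ the contribution vanishes. For $(-1, -1)$-curves ($n = 1$) the rigidity of $C$ makes this immediate from the uniqueness of $\mathcal{O}_C(d) = F$ within its class. For $(0, -2)$-curves with $n \geq 2$, however, $C$ deforms in a family and semistable thickenings of $C$ form positive-dimensional moduli; here I would use the Pagoda diagram (\ref{Pagoda}) to deform to an iterated chain of $(-1,-1)$-flops and transport parabolic stable pair moduli along the deformation, together with an explicit $\mathbb{C}^*$-action on the local moduli whose fixed locus consists of exactly the $n+1$ expected pairs for $0 \leq m \leq n$ and is free (hence contributes zero Euler characteristic) for $m > n$.
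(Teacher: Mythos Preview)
Your proposal has two concrete gaps that would prevent it from going through as written.

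First, for $\mu \notin \mathbb{Z}$ you propose to show the Euler characteristic vanishes via a free $\mathbb{C}^{\ast}$-action on a nonempty moduli space. In fact the moduli space is \emph{empty}. Since $l=1$ we have $f^{-1}(p)=C$ scheme-theoretically, so every $\mu_{\widehat{H}}$-stable factor of $F$ is an $\oO_C$-module, hence a line bundle $\oO_C(k)$ on $C\cong\mathbb{P}^1$ with integer slope $k+1$. Thus there are no semistable sheaves of non-integer slope at all, and your proposed analysis of Jordan--H\"older factors with $m_i>1$ never occurs.

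Second, and more seriously, your classification for $\mu\in\mathbb{Z}$ rests on the assertion that for the thickened sheaf $F$ ``the only slope-$\mu$ quotient of such an $F$ is $F$ itself''. This is false: all shorter thickenings are quotients with the same slope, and parabolic stability requires $s$ to survive in every one of them. Relatedly, $\Aut(F)$ is not just $\mathbb{C}^{\ast}$ but $\mathbb{C}^{\ast}\times\mathbb{C}^{m-1}$, so the step ``$s$ nonzero, unique up to $\mathbb{C}^{\ast}$'' is not correct. The paper handles this by first proving that any parabolic stable pair must have $F$ indecomposable (via an explicit destabilizing quotient constructed from any splitting $F=F_1\oplus F_2$), and then identifying $\langle\oO_C(-1)\rangle_{\rm ex}$ with $\modu(\mathbb{C}[y]/(y^n))$ via the known quiver description of $D^b\Coh(\widehat{X})$. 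In that category the indecomposables are exactly $\mathbb{C}[y]/(y^m)$ for $1\le m\le n$, which immediately gives vanishing for $m>n$ without any deformation or $\mathbb{C}^{\ast}$-action, and for $1\le m\le n$ one checks directly that $\Aut(F)\cong\mathbb{C}^{\ast}\times\mathbb{C}^{m-1}$ acts simply transitively on the set of admissible $s$. Your proposed Pagoda/deformation route is neither needed nor clearly workable, since the invariants $\mathrm{Par}_{\widehat{H}}^{\nu\equiv 1}$ are not known to be deformation invariant.
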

\begin{proof}
Let $F \in \Coh_{0}(X/Y)$ be 
a $\mu_{\widehat{H}}$-semistable sheaf with 
$[F]=m[C]$ for $m\ge 1$, 
and $F_1, \cdots, F_e$
the $\mu_{\widehat{H}}$-stable factors of 
$F$. 
Then each $F_i$ is $\oO_{f^{-1}(p)} =\oO_C$-module, 
hence isomorphic to $\oO_C(k_i)$ for some $k_i \in \mathbb{Z}$. 
Since $\mu_{\widehat{H}}(F)=\mu_{\widehat{H}}(F_i)=k_i +1$, the integer 
$k_i$ is independent of $i$,
and $\mu_{\widehat{H}}(F)$ is an integer. 
This implies that $\mathrm{Par}_{\widehat{H}}^{\nu, \mu}(q, t)=1$ unless
$\mu \in \mathbb{Z}$. 

Suppose that $\mu \in \mathbb{Z}$. 
It is enough to show that 
$\mathrm{Par}_{\widehat{H}}^{\nu}(mC, m\mu)$ equals to one 
for $1\le m\le n$, and zero otherwise. 
Applying $\otimes \oO_{\widehat{X}}(-\mu \widehat{H})$, 
we have $\mathrm{Par}_{\widehat{H}}^{\nu}(mC, m\mu)
=\mathrm{Par}_{\widehat{H}}^{\nu}(mC, 0)$, so we may 
assume that $\mu=0$. 
Let $(F, s)$ be a $\widehat{H}$-parabolic stable pair
on $\widehat{X}$ with 
$([F], \chi(F))=(m[C], 0)$. 
Then by the above argument, we have 
$F \in \langle \oO_C(-1) \rangle_{\rm{ex}}$. 
We first show that $F$ must be indecomposable. 
Indeed suppose that 
$F$ decomposes as $F_1 \oplus F_2$
with $F_i \neq 0$, and write 
$s=(s_1, s_2)$ with $s_i \in F_i \otimes \oO_{\widehat{H}}$. 
We choose surjections $\pi_i \colon F_i \twoheadrightarrow \oO_C(-1)$, 
and set $s_i'=\pi_i(s_i) \in \oO_C(-1) \otimes \oO_{\widehat{H}}
 \cong \mathbb{C}$. 
By the parabolic stability, 
we have $(s_1', s_2') \in \mathbb{C}^2 \setminus \{0\}$.  
Then the composition
\begin{align*}
\pi \colon F_1 \oplus F_2 \stackrel{(\pi_1, \pi_2)}{\twoheadrightarrow}
\oO_C(-1)^{\oplus 2} \stackrel{(s_2', -s_1')}{\twoheadrightarrow} \oO_C(-1)
\end{align*}
satisfies $(\pi \otimes \oO_{\widehat{H}})(s)=0$, which 
contradicts to the parabolic stability of $(F, s)$. 
Hence $F$ is indecomposable. 

Next we classify indecomposable objects in 
$\langle \oO_C(-1) \rangle_{\rm{ex}}$. 
The derived category $D^b \Coh(\widehat{X})$ is known 
to be equivalent to the derived category of 
finitely generated right modules over the 
completion of the path algebra $A$ of the quiver 
of the form (cf.~\cite[Example~3.10]{WM})
\begin{align}\label{quiver}
\xymatrix{
\circ \ar@(ul, dl)[]|{y_2} \ar@/^/[rr]|{a_2} \ar@/^1.5pc/[rr]|{a_1}&&
\bullet \ar@/^/[ll]|{b_1} \ar@/^1.5pc/[ll]|{b_2} \ar@(ur, dr)[]|{y_1}
} 
\end{align}
with relations given by $y_1 a_i = a_i y_2$, 
$y_2 b_i = b_i y_1$, 
$2y_1^n =a_1 b_1-a_2 b_2$ and 
$2y_2^n =b_1 a_1 - b_2 a_2$. 
Under the above derived equivalence, the category 
$\langle \oO_{C}(-1) \rangle_{\rm{ex}}$ is equivalent 
to the subcategory of $\modu A$ generated by a simple object
corresponding to one of the vertex in (\ref{quiver}), say $\bullet$. 
Hence $\langle \oO_C(-1) \rangle_{\rm{ex}}$ is equivalent to 
$\modu\left(\mathbb{C}[y_1]/(y_1^n)\right)$. 
The indecomposable objects in  the latter 
category consist of $\mathbb{C}[y_1]/(y_1^m)$ with 
$1\le m\le n$, hence $\mathrm{Par}_{\widehat{H}}^{\nu}(mC, 0)=0$
for $m>n$. 

Suppose that $1\le m \le n$, and 
$F \in \langle \oO_C(-1) \rangle_{\rm{ex}}$ corresponds to 
$\mathbb{C}[y_1]/(y_1^m)$. 
Let $\pi' \colon F\twoheadrightarrow F'$ be the quotient 
corresponding to the surjection
$\mathbb{C}[y_1]/(y_1^m) \twoheadrightarrow \mathbb{C}$. 
Then $\pi'$ factors through 
any quotient $F \twoheadrightarrow F''$
in $\langle \oO_C(-1) \rangle_{\rm{ex}}$, hence $(F, s)$
is a parabolic stable pair if and only if 
$0\neq (\pi' \otimes \oO_{\widehat{H}})(s) \in \mathbb{C}$. 
Since $F\otimes \oO_{\widehat{H}} \cong \mathbb{C}^m$, 
the set of such parabolic stable pairs 
is identified with $\mathbb{C}^{\ast} \times 
\mathbb{C}^{m-1}$. 
On the other hand, $\Aut(F)$ is isomorphic to 
$\Aut(\mathbb{C}[y_1]/(y_1^m))$ in $\modu(\mathbb{C}[y_1]/(y_1^n))$, 
and the latter group consists of 
$1 \mapsto a_0 + a_1 y_1 + \cdots + a_{m-1} y_1^{m-1}$
with $a_i \in \mathbb{C}$, $a_0 \neq 0$. 
Hence $\Aut(F)$ is isomorphic to $\mathbb{C}^{\ast} \times \mathbb{C}^{m-1}$. 
The element 
 $(a_0, a_1, \cdots, a_{m-1}) \in \Aut(F)$
acts on the 
above set $\mathbb{C}^{\ast} \times \mathbb{C}^{m-1}$
of parabolic stable pairs 
in the following way: 
\begin{align*}
( b_0, b_1, \cdots, b_{m-1}) \mapsto \left(a_0 b_0, a_0 b_1+a_1 b_0, \cdots, 
\sum_{k=0}^{m-1} a_k b_{m-1-k} \right). 
\end{align*}
Hence the action of 
$\Aut(F)$ on the set of parabolic 
stable pairs 
$\mathbb{C}^{\ast} \times \mathbb{C}^{m-1}$
is free and transitive.  
This implies that the moduli space
$M_{\widehat{H}}^{\rm{par}}(m[C], 0)$ consists of one point, 
hence we have $\mathrm{Par}_{\widehat{H}}^{\nu}(mC, 0)=1$
for $1\le m\le n$. 
\end{proof}
We have the following result: 
\begin{thm}\label{cor:ii}
Suppose that $\nu \equiv 1$ and $l=1$.
Let $n$ be the width of $C$, and $f_n(x)$ 
the polynomial (\ref{fn}).  
Then we have the following identity: 
\begin{align}\notag
\DT_{f^{\dag \ast}\omega}^{\nu}(\phi_{\ast}P)
&=\phi_{\ast} \DT_{f^{\ast}\omega}^{\nu}(P) \\
\label{eq:cor}
&\hspace{10mm} \cdot \left\{ q^{\frac{n}{12}}t^{\frac{n}{2}C^{\dag}}
\prod_{m \in \mathbb{Z}_{>0}}f_n(q^{m}t^{C^{\dag}})
\prod_{m \in \mathbb{Z}_{\ge 0}} f_n(q^{m} t^{-C^{\dag}})
 \right\}^{P \cdot C}. 
\end{align}
\end{thm}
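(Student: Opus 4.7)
The plan is to specialize Theorem~\ref{thm:main} to the present situation ($l=1$, $\nu \equiv 1$, with $C$ the irreducible exceptional curve of width $n$) and then evaluate each ingredient explicitly. Since $H_2(X^{\dag}/Y)$ is generated by $[C^{\dag}]$, every effective nonzero class $\beta^{\dag}$ appearing in the error products of Theorem~\ref{thm:main} is of the form $\beta^{\dag} = m[C^{\dag}]$ with $m \geq 1$. By Proposition~\ref{lem:psi}, the monomial prefactor is $q^{\psi_0(P)} t^{\psi_1(P)} = q^{-n(P\cdot C)/12} t^{-(n/2)(P\cdot C)C^{\dag}}$, and by Lemma~\ref{lem:cup} together with the $\nu \equiv 1$ branch of (\ref{def:delta}), the bracket becomes $\langle \phi_{\ast}P, m[C^{\dag}]\rangle = -m(P\cdot C)$. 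Isolating $\DT^{\nu}_{f^{\dag \ast}\omega}(\phi_{\ast}P)$ on one side, (\ref{eq:cor}) reduces to the two identities
\begin{align*}
\prod_{m \geq 1,\, n > 0} \exp\bigl(N^{\nu}_{n, m[C^{\dag}]} q^n t^{m C^{\dag}}\bigr)^{m} &= \prod_{m > 0} f_n(q^m t^{C^{\dag}}), \\
\prod_{m \geq 1,\, n \geq 0} \exp\bigl(N^{\nu}_{n, m[C^{\dag}]} q^n t^{-m C^{\dag}}\bigr)^{m} &= \prod_{m \geq 0} f_n(q^m t^{-C^{\dag}}).
\end{align*}

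To establish these identities I choose a smooth divisor $\widehat{H}^{\dag} \subset X^{\dag}$ meeting $C^{\dag}$ transversally at a single point, so that $\widehat{H}^{\dag} \cdot C^{\dag} = 1$. With this choice, a one-dimensional sheaf of class $m[C^{\dag}]$ and Euler characteristic $n$ has $\mu_{\widehat{H}^{\dag}}$-slope equal to $n/m$, and the product formula (\ref{Par=N}) (using that $\langle \widehat{H}^{\dag}, m[C^{\dag}]\rangle = m$ in the $\nu \equiv 1$ branch) specializes to
\[
\mathrm{Par}_{\widehat{H}^{\dag}}^{\nu, \mu}(q,t) = \prod_{m \geq 1,\, n = m\mu} \exp\bigl(N^{\nu}_{n, m[C^{\dag}]} q^n t^{m C^{\dag}}\bigr)^{m}.
\]
On the other hand, Lemma~\ref{lem:compv} applied on $X^{\dag}$ evaluates the left-hand side as $f_n(q^{\mu} t^{C^{\dag}})$ for $\mu \in \mathbb{Z}$ and as $1$ otherwise. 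Taking the product over $\mu \in \mathbb{Z}_{>0}$ gives the first required identity, and the parallel computation with $t \mapsto t^{-1}$ and $\mu \in \mathbb{Z}_{\geq 0}$ gives the second. Substituting these evaluations back into Theorem~\ref{thm:main} and collecting the $\psi_0, \psi_1$ contributions yields exactly the $(P\cdot C)$-th power appearing in (\ref{eq:cor}).

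The main bookkeeping subtlety is the interaction between the exponent $m$ built into the product formula (\ref{Par=N}) via $\langle \widehat{H}^{\dag}, m[C^{\dag}]\rangle$ and the exponent $m$ required to match Theorem~\ref{thm:main}; these agree precisely when $\widehat{H}^{\dag} \cdot C^{\dag} = 1$, which is why one must select $\widehat{H}^{\dag}$ to meet $C^{\dag}$ transversally at one point. A conceptual remark worth stressing is that, in contrast to the Behrend-weighted case treated in Theorem~\ref{thm:intro2}, the individual invariants $N^{\nu}_{n, m[C^{\dag}]}$ are \emph{not} deformation-invariant and cannot be reduced to the $(-1,-1)$-curve calculation of Lemma~\ref{lem:com:N}; nevertheless, the aggregate parabolic series $\mathrm{Par}_{\widehat{H}^{\dag}}^{\nu, \mu}$ is computed directly by Lemma~\ref{lem:compv} through a classification of parabolic stable pairs on the formal neighborhood of $C^{\dag}$, and this is what rescues the argument.
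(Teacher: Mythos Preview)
Your proposal is correct and follows essentially the same route as the paper's proof: specialize Theorem~\ref{thm:main}, insert the values of $\psi_0,\psi_1$ from Proposition~\ref{lem:psi}, convert the $N^{\nu}$-products into parabolic generating series via (\ref{Par=N}) for a divisor $\widehat{H}^{\dag}$ with $\widehat{H}^{\dag}\cdot C^{\dag}=1$, and evaluate those series by Lemma~\ref{lem:compv}. The only cosmetic point is the overloaded symbol $n$ (both the width and the Euler-characteristic index), which the paper also commits; otherwise your bookkeeping, including the remark on why $\widehat{H}^{\dag}\cdot C^{\dag}=1$ is needed, matches the paper's argument.
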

\begin{proof}
We take a smooth divisor $\widehat{H}^{\dag}$ in $\widehat{X}^{\dag}$
such that $\widehat{H}^{\dag}\cap C^{\dag}$
consists of one point. 
By Proposition~\ref{lem:psi}, Theorem~\ref{thm:main} and (\ref{Par=N}), 
we have 
\begin{align*}
\phi_{\ast} \DT_{f^{\ast}\omega}^{\nu}(P) &=
\DT_{f^{\dag \ast}\omega}^{\nu}(\phi_{\ast}P) \\
&\cdot
 \left(q^{\frac{n}{12}}t^{\frac{n}{2}C^{\dag}}\right)^{-P \cdot C}
\left\{\prod_{\mu>0} \mathrm{Par}_{\widehat{H}^{\dag}}^{\nu, \mu}(q, t) 
\prod_{\mu \ge 0} \mathrm{Par}_{\widehat{H}^{\dag}}^{\nu, \mu}(q, t^{-1})
\right\}^{\phi_{\ast}P \cdot C^{\dag}}.
\end{align*}
Applying Lemma~\ref{lem:cup} and 
Lemma~\ref{lem:compv}, we obtain the desired result. 
\end{proof}
\begin{rmk}
Noting that
$f_n(x)=\prod_{j=1}^{n} (1-\xi^j x)$
for $\xi=e^{\frac{2\pi i}{n+1}}$, 
(\ref{theta:pro}), and 
$\prod_{j=1}^{n}(-i \xi^{-\frac{j}{2}})=(-1)^n$, 
the formula (\ref{eq:cor})
is also written as 
\begin{align*}
\DT_{f^{\dag \ast}\omega}^{\nu}(\phi_{\ast}P)=
\phi_{\ast} \DT_{f^{\ast}\omega}^{\nu} \cdot \prod_{j=1}^{n}
\left\{-\eta(q)^{-1} \vartheta_{1, 1}(q, \xi^j t^{C^{\dag}})
   \right\}^{P \cdot C}. 
\end{align*}
\end{rmk}

\subsection{A version with fixed supports}\label{subsec:local}
We can similarly prove a variant of Theorem~\ref{thm:intro}, 
Theorem~\ref{cor:ii}
for DT type invariants counting semistable 
sheaves supported on 
a fixed effective divisor $S \subset X$. 
We denote by $\Coh_S(X)$ the subcategory 
of $\Coh_{\le 2}(X)$ consisting of sheaves supported on $S$. 
We set $\cC_{f^{\ast}\omega}^{\mu, S}$ to be
\begin{align*}
 \cC_{f^{\ast}\omega}^{\mu, S}
\cneq \cC_{f^{\ast}\omega}^{\mu} \cap \Coh_S(X).
\end{align*}
For $v\in \pGamma_{f^{\ast}\omega}^{\mu}$, we define
$\DT_{f^{\ast}\omega}^{\nu, S}(v) \in \mathbb{Q}$
as follows: 
\begin{align}\label{Pi:DT2}
\Pi^{\nu}\left(\epsilon_{\cC_{f^{\ast}\omega}^{\mu, S}}\right)
= \sum_{v \in \pGamma_{f^{\ast}\omega}^{\mu} }
(-1)^{\epsilon(\nu)} \DT^{\nu, S}_{f^{\ast}\omega}(v) \cdot c_v. 
\end{align} 
Similarly to $\DT_{f^{\ast}\omega}^{\nu}(P)$, we set
\begin{align*}
\DT_{f^{\ast}\omega}^{\nu, S}(P) \cneq 
\sum_{n, \beta} \DT_{f^{\ast}\omega}^{\nu, S}(P, -\beta, -n)q^n t^{\beta}
\end{align*}
for $P \in H^2(X)$. Note that if $S$ is an irreducible divisor 
then $\DT_{f^{\ast}\omega}^{\nu, S}(P)$ is non-zero only if 
$P$ is a positive multiple of the cohomology class of $S$. 
Let $S^{\dag} \subset X^{\dag}$ be the strict
transform of $S$, and 
$f_n(x)$ the polynomial as in (\ref{fn}). 
We have the following result: 
\begin{thm}\label{cor:local}
(i) If $\nu=\chi_B$, 
we have the following formula: 
\begin{align*}
&\DT_{f^{\dag \ast} \omega}^{\nu, S^{\dag}}(\phi_{\ast}P)
=\phi_{\ast}\DT_{f^{\ast}\omega}^{\nu, S}(P)
 \\
&\hspace{20mm}\cdot 
\prod_{j=1}^{l} \left\{ 
i^{jP \cdot C-1}
\eta(q)^{-1}\vartheta_{1, 1}(q, ((-1)^{\phi_{\ast}P}t)^{jC^{\dag}}) \right\}^{jn_j P \cdot C}. 
\end{align*}

(ii) If $\nu \equiv 1$ and $l=1$, let 
$n$ be the width of $C$, and 
set $\xi=e^{\frac{2\pi i}{n+1}}$. We have the 
following formula:
\begin{align}\notag
\DT_{f^{\dag \ast}\omega}^{\nu, S^{\dag}}(\phi_{\ast}P)
&=\phi_{\ast} \DT_{f^{\ast}\omega}^{\nu, S}(P) \\
\label{nu1}
&\hspace{10mm} \cdot \left\{ q^{\frac{n}{12}}t^{\frac{n}{2}C^{\dag}}
\prod_{m \in \mathbb{Z}_{>0}}f_n(q^{m}t^{C^{\dag}})
\prod_{m \in \mathbb{Z}_{\ge 0}} f_n(q^{m} t^{-C^{\dag}})
 \right\}^{P \cdot C}. 
\end{align}
\end{thm}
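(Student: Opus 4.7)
The plan is to adapt the Hall algebra argument developed for Theorems~\ref{thm:intro2} and~\ref{cor:ii} to the fixed-supported setting. The essential point is that the entire error-term calculation in those proofs is driven by invariants $N_{n,\beta}^{\nu}$ counting sheaves whose support is contained in the exceptional fiber $f^{-1}(p)$, and these invariants are completely indifferent to whether we impose the additional constraint of being supported on $S$. So what is really needed is that the extension of $\Phi$ to a fixed-support subcategory behaves well.

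First I would introduce the analogue of the abelian category of Subsection~\ref{subsec:Tilt}, namely
$$\pB_{f^{\ast}\omega}^{\mu, S} \cneq \langle \pF, \ \cC_{f^{\ast}\omega}^{\mu, S}, \ \pT[-1] \rangle_{\rm{ex}} \subset \pB_{f^{\ast}\omega}^{\mu}.$$
By Lemma~\ref{lem:C} and Proposition~\ref{prop:filt}, every object $E \in \pB_{f^{\ast}\omega}^{\mu, S}$ admits a unique three-step filtration whose middle factor lies in $\cC_{f^{\ast}\omega}^{\mu, S}$, and boundedness of its moduli is inherited from Proposition~\ref{prop:bound}. The completed Hall algebra $\widehat{H}(\pB_{f^{\ast}\omega}^{\mu, S})$ and the integration map (\ref{Pinu}) restrict without change, and the element $\epsilon_{\cC_{f^{\ast}\omega}^{\mu,S}}$ computes $\DT_{f^{\ast}\omega}^{\nu,S}(v)$ through (\ref{Pi:DT2}).

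The crucial step is to show $\Phi$ restricts to an equivalence
$\Phi \colon \oB_{f^{\ast}\omega}^{\mu, S} \simto \iB_{f^{\dag \ast}\omega}^{\mu, S^{\dag}}$.
By Lemma~\ref{Phires} we already know $\Phi$ maps $\oB_{f^{\ast}\omega}^{\mu}$ to $\iB_{f^{\dag \ast}\omega}^{\mu}$. Since $\Phi$ is the Fourier--Mukai transform with kernel $\oO_{X \times_Y X^{\dag}}$ and $X \times_Y X^{\dag}$ restricts to the graph of the isomorphism $X \setminus \Ex(f) \cong X^{\dag} \setminus \Ex(f^{\dag})$ over $Y \setminus \{p\}$, the functor $\Phi$ acts as identification along this isomorphism away from $p$. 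Consequently, for $E \in \cC_{f^{\ast}\omega}^{\mu, S}$ the object $\Phi(E)$ has support contained in $S^{\dag} \cup \Ex(f^{\dag})$, and one verifies that in its canonical filtration in $\iB_{f^{\dag\ast}\omega}^{\mu}$ the middle factor is supported on $S^{\dag}$ while all exceptional corrections are absorbed by the $\iF$ and $\iT[-1]$ factors (both of which are supported on $\Ex(f^{\dag})$).

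Once this refined equivalence is in place, the identities (\ref{form:delta}) and (\ref{form:delta2}) are valid verbatim with $\cC_{f^{\ast}\omega}^{\mu, S}$ replacing $\cC_{f^{\ast}\omega}^{\mu}$; applying the integration map and the Baker--Campbell--Hausdorff computation of Proposition~\ref{prop:hat} yields the exact analogue of Theorem~\ref{thm:main} with $\DT_{f^{\ast}\omega}^{\nu,S}$ in place of $\DT_{f^{\ast}\omega}^{\nu}$ and identical error factors expressed via $N_{n,\beta}^{\nu}$ (equivalently, via $\mathrm{Par}_{H^{\dag}}^{\nu,\mu}$). The error factors are unchanged because they arise from $\delta_{\pPPer_0(X/Y)[-1]}$, whose generators are supported on $\Ex(f)$ and therefore belong to both $\pB_{f^{\ast}\omega}^{\mu}$ and $\pB_{f^{\ast}\omega}^{\mu, S}$ indistinguishably. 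Invoking Proposition~\ref{prop:comp} together with the Jacobi triple product (\ref{theta:pro}) for part (i), and Lemma~\ref{lem:compv} for part (ii), now gives the two claimed formulas. The main obstacle is the support-preservation property of $\Phi$; once this is pinned down, every other ingredient of the argument is a structural repetition of Subsections~\ref{subsec:Hall}--\ref{subsec:proof}.
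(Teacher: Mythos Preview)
Your argument is correct and follows the same overall strategy as the paper. The one organizational difference is in how the subcategory $\pB_{f^{\ast}\omega}^{\mu, S}$ is introduced: you define it as the extension closure $\langle \pF,\ \cC_{f^{\ast}\omega}^{\mu, S},\ \pT[-1]\rangle_{\rm ex}$, whereas the paper defines it as the full subcategory of objects $E\in \pB_{f^{\ast}\omega}^{\mu}$ with $E|_{X\setminus \Ex(f)}$ supported on $S$. These two descriptions coincide (via Proposition~\ref{prop:filt} and purity of the middle factor), but the paper's choice makes the two points you flag as ``crucial'' and ``main obstacle'' automatic: abelianness is immediate from the support condition, and $\Phi$-stability is obvious because the Fourier--Mukai kernel restricts to the graph of the identification over $Y\setminus\{p\}$. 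With your definition you must instead recover this support statement inside the canonical filtration of $\Phi(E)$, which you do correctly but which is precisely the step the paper's definition is engineered to bypass. After that, both proofs run identically through the Hall-algebra identity (\ref{equal:deltaS}), Proposition~\ref{prop:hat}, and the error-term computations of Proposition~\ref{prop:comp} and Lemma~\ref{lem:compv}.
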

\begin{proof}
Let $\pB_{f^{\ast}\omega}^{\mu, S} \subset \pB_{f^{\ast}\omega}^{\mu}$ be the 
subcategory consisting of objects $E \in \pB_{f^{\ast}\omega}^{\mu}$ 
such that $E|_{X \setminus \Ex(f)}$ is supported on $S$. 
It is easy to check that $\pB_{f^{\ast}\omega}^{\mu, S}$
is an abelian subcategory of $\pB_{f^{\ast}\omega}^{\mu}$. 
We claim that the following equality holds
in $\widehat{H}(\pB_{f^{\ast}\omega}^{\mu})$: 
\begin{align}\label{equal:deltaS}
\delta_{\pB_{f^{\ast}\omega}^{\mu, S}}
= \delta_{\pF} \ast \delta_{\cC_{f^{\ast}\omega}^{\mu, S}} 
\ast \delta_{\pT[-1]}. 
\end{align}
Similarly to Proposition~\ref{prop:delta}, 
it is enough to show the following: 
for any $E \in \pB_{f^{\ast}\omega}^{\mu, S}$, 
if we take a filtration (\ref{filt:B}), then 
$F_2 \in \cC_{f^{\ast}\omega}^{\mu, S}$ holds. 
The condition 
$E \in \pB_{f^{\ast}\omega}^{\mu, S}$
implies that $F_2|_{X \setminus \Ex(f)}$ is supported on $S$. 
On the other hand, the sheaf $F_2$ is 
$\hat{\mu}_{f^{\ast}\omega}$-semistable, hence it is pure 
two dimensional. 
Therefore $F_2$ must be supported on $S$, 
which implies 
$F_2 \in \cC_{f^{\ast}\omega}^{\mu, S}$. 
Hence the equality (\ref{equal:deltaS}) holds.

Let $\Phi$ be the derived equivalence (\ref{Deq}). 
By Lemma~\ref{Phires}, 
it is obvious that $\Phi$ takes $\oB_{f^{\ast}\omega}^{\mu, S}$ to 
$\iB_{f^{\dag \ast}\omega}^{\mu, S^{\dag}}$.
Hence
we have the equality
\begin{align*}
\Phi^H_{\ast} \delta_{\oB_{f^{\ast}\omega}^{\mu, S}}=\delta_{\iB_{f^{\dag\ast}\omega}^{\mu, S^{\dag}}}.
\end{align*}
Therefore the same argument of Theorem~\ref{thm:main} is applied 
and we obtain the following formula:
\begin{align*}
&\phi_{\ast} \DT_{f^{\ast}\omega}^{\nu, S}(P) =q^{\psi_0(P)}t^{\psi_1(P)}
\DT_{f^{\dag \ast}\omega}^{\nu, S^{\dag}}(\phi_{\ast}P) \\
& \cdot \prod_{\begin{subarray}{c}
n> 0, \beta^{\dag}>0 \\
f^{\dag}_{\ast}\beta^{\dag}=0
\end{subarray}}
\exp 
\left( N_{n, \beta^{\dag}}^{\nu} q^n t^{\beta^{\dag}} 
\right)^{\langle \phi_{\ast}P, \beta^{\dag} \rangle}
\prod_{\begin{subarray}{c}
n\ge 0, \beta^{\dag}>0 \\
f^{\dag}_{\ast}\beta^{\dag}=0
\end{subarray}} 
\exp 
\left( N_{n, \beta^{\dag}}^{\nu} q^n 
t^{-\beta^{\dag}} \right)^{\langle \phi_{\ast}P, \beta^{\dag} \rangle}. 
\end{align*}
Then the computations of the error term in the previous
subsections give the desired result. 
\end{proof}

\section{Blow-up formula for DT type invariants on canonical line bundles on surfaces}\label{section:blow}
Let $S$ be a smooth projective surface, and 
\begin{align*}
\pi \colon \omega_S \to S
\end{align*}
the total space of the canonical line bundle of 
$S$, 
which is a non-compact Calabi-Yau 3-fold. 
The purpose of this section is to 
compare DT type invariants on $\omega_S$
under a blow-up $g \colon S^{\dag} \to S$ at
a point $p\in S$. 
In what follows, we regard $S, S^{\dag}$ as subvarieties of 
$\omega_S, \omega_{S^{\dag}}$ by the zero sections. 
\subsection{DT type invariants on canonical line bundles}\label{subsec:DTtype}
Let $\Coh_{\rm{c}}(\omega_S)$ be the abelian category of 
coherent sheaves on $\omega_S$ with compact supports. 
We have the following inclusions of abelian subcategories
\begin{align*}
\Coh(S) \subset \Coh_S(\omega_S) \subset \Coh_{\rm{c}}(\omega_S). 
\end{align*}
For $E \in \Coh_{\rm{c}}(\omega_S)$, 
we set 
\begin{align*}
\ch(\pi_{\ast} E)=(r, l, s) \in H^{0}(S) \oplus H^2(S) \oplus H^4(S).
\end{align*}
Let $L$ be an ample divisor on $S$. We 
define the slope $\mu_L(E)$ to be 
\begin{align*}
\mu_L(E) \cneq \frac{l \cdot L}{r} \in \mathbb{Q} \cup \{\infty\}.  
\end{align*}
Here we set $\mu_L(E)=\infty$ if $r=0$. 
The above slope function defines the $\mu_L$-stability 
on $\Coh_{\rm{c}}(\omega_S)$, 
which restricts to the usual 
$L$-slope stability on $\Coh(S)$. 
Let 
$\cC oh_{\rm{c}}(\omega_S)$ be the stack of 
all the objects in $\Coh_{c}(\omega_S)$. 
The stack $\cC oh_{\rm{c}}(\omega_S)$ is an open substack of 
the algebraic stack of 
all the objects in $\Coh(X)$ for any projective compactification
$\omega_S \subset X$.
Hence $\cC oh_{\rm{c}}(\omega_S)$ is 
an algebraic stack locally of finite type. 
Similarly to Subsections~\ref{subsec:Hall}, ~\ref{subsec:int}, 
we can define the Hall algebra $H(\Coh_{\rm{c}}(\omega_S))$
of $\Coh_{\rm{c}}(\omega_S)$, 
the Lie subalgebra $H^{\rm{Lie}}(\Coh_{\rm{c}}(\omega_S))$
of virtual indecomposable objects, and 
the unweighted integration map 
\begin{align}\label{Lie:pi}
\Pi^{\nu \equiv 1} \colon 
H^{\rm{Lie}}(\Coh_{\rm{c}}(\omega_S))
\to \bigoplus_{(r, l, s) \in H^{\ast}(S)}
\mathbb{Q} \cdot c_{(r, l, s)}. 
\end{align}
For $\mu \in \mathbb{Q}$, let $\cC^{\mu}_{L} \subset \cC oh_{\rm{c}}(\omega_S)$
be the substack corresponding to $\mu_L$-semistable 
sheaves $E \in \Coh_S(\omega_S)$ with slope $\mu$. 
The map (\ref{Lie:pi})
extends to appropriate completions of both sides, and
the invariant  
$\DT_L^{\chi}(r, l, s) \in \mathbb{Q}$ is defined by
\begin{align*}
\Pi^{\nu \equiv 1}(\epsilon_{\cC^{\mu}_L})=\sum_{l \cdot L/r =\mu}
\DT_L^{\chi}(r, l, s) \cdot c_{(r, l, s)}. 
\end{align*}
\begin{rmk}
Let $\mM_L(r, l, s)$ be the moduli stack of 
$\mu_L$-semistable sheaves $E\in \Coh_S(\omega_S)$
with $\ch(\pi_{\ast}E)=(r, l, s)$. 
By the same reason of Remark~\ref{rmk:GIT}, 
if any $[E] \in \mM_L(r, l, s)$ is $\mu_L$-stable, then 
$\mM_L(r, l, s)$ is a $\mathbb{C}^{\ast}$-gerbe
over a projective scheme $M_L(r, l, s)$.
The invariant
$\DT_L^{\chi}(r, l, s)$
coincides with the naive Euler characteristic of 
$M_L(r, l, s)$. 
\end{rmk}
For $(r, l, s) \in H^{\ast}(S^{\dag})$, 
we can similarly define the invariant
$\DT_{g^{\ast}L}^{\chi}(r, l, s) \in \mathbb{Q}$
by replacing $(S, L)$ by $(S^{\dag}, g^{\ast}L)$
in the above construction.

\subsection{Blow-up and 3-fold flop}
The following lemma 
relates a blow-up of a surface
with a 3-fold flop: 
\begin{lem}\label{lem:com}
There exist smooth projective 3-folds $X$, $X^{\dag}$
and a flop diagram (\ref{flop:dia})
satisfying the following conditions: 
\begin{itemize}
\item Both of the
 exceptional locus $C=\Ex(f)$, $C^{\dag} =\Ex(f^{\dag})$
are 
irreducible 
$(-1, -1)$-curves. 
\item There are closed embeddings 
\begin{align*}
i \colon S \hookrightarrow X, \quad 
i^{\dag} \colon S^{\dag} \hookrightarrow X^{\dag}
\end{align*}
such that 
$S \cap C$ consists of one point, 
the strict transform of $S$
in $X^{\dag}$ 
coincides with $S^{\dag}$, 
and $C^{\dag} \subset S^{\dag}$ coincides with 
the exceptional locus of $g \colon S^{\dag} \to S$. 
Moreover $f^{\dag}(S^{\dag})=S$, 
$f^{\dag}|_{S^{\dag}}=g$ and 
$kL$ for $k\gg 0$
 extends to an ample divisor $L'$ on $Y$.  
\item There are open neighborhoods 
\begin{align}\label{open}
S \subset X_{0}, \quad S^{\dag} \subset X_0^{\dag}
\end{align}
which are isomorphic to $\omega_S$, $\omega_{S^{\dag}}$ 
respectively, such that the embeddings (\ref{open})
are identified with the zero sections. 
\end{itemize}
\end{lem}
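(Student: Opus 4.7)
The plan is to construct $X^\dag$ first as a projective compactification of $\omega_{S^\dag}$ in which $C^\dag$ becomes a $(-1,-1)$-curve, and then obtain $X$ as its flop. Concretely, I would set $X^\dag := \mathbb{P}(\omega_{S^\dag} \oplus \mathcal{O}_{S^\dag})$, a smooth projective $\mathbb{P}^1$-bundle $\pi\colon X^\dag \to S^\dag$, let $i^\dag \colon S^\dag \hookrightarrow X^\dag$ be the zero section, and note that $X_0^\dag := X^\dag \setminus \sigma_\infty$ is canonically identified with $\omega_{S^\dag}$. Adjunction along $C^\dag \subset S^\dag \subset X^\dag$ gives
\[
N_{C^\dag/X^\dag} \cong N_{C^\dag/S^\dag} \oplus \omega_{S^\dag}|_{C^\dag},
\]
and combined with $K_{S^\dag} = g^{\ast}K_S + C^\dag$ yields $N_{C^\dag/X^\dag} \cong \mathcal{O}(-1)^{\oplus 2}$, so $C^\dag$ is a $(-1,-1)$-curve. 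Contracting this $K_{X^\dag}$-trivial extremal curve by Mori theory gives a flopping contraction $f^\dag \colon X^\dag \to Y$ to a projective $3$-fold $Y$ with an ordinary double point at $p := f^\dag(C^\dag)$; the classical $(-1,-1)$-flop then produces the projective small resolution $f \colon X \to Y$ with exceptional $(-1,-1)$-curve $C$.

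Next I would define $i \colon S \hookrightarrow X$ as the closure of $\phi^{-1}(S^\dag \setminus C^\dag)$. The common blow-up $\widetilde{X}$ of $X$ along $C$ and of $X^\dag$ along $C^\dag$ has exceptional divisor $\widetilde{E} \cong \mathbb{P}^1 \times \mathbb{P}^1$; the subbundle $N_{C^\dag/S^\dag} \subset N_{C^\dag/X^\dag}$ cuts out a section of $\widetilde{E} \to C^\dag$ which is one of the two rulings, and this ruling is contracted to a single point $q \in C$ under $\widetilde{X} \to X$. Hence $S$ is the smooth blow-down of $S^\dag$ along $C^\dag$, $S \cap C = \{q\}$ is a single transverse intersection, and $f^\dag|_{S^\dag} = g$ with $f^\dag(S^\dag) = S$.

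The most delicate part is producing the open neighborhood $X_0 \subset X$ with $X_0 \cong \omega_S$. Let $F \subset X$ be the strict transform under $\phi^{-1}$ of $\pi^{-1}(C^\dag) \subset X^\dag$: since $\pi^{-1}(C^\dag)$ is the Hirzebruch surface $\mathbb{F}_1$ with $C^\dag$ its $(-1)$-section, the common blow-up picture identifies $F$ with the contraction of this $(-1)$-section, so $F \cong \mathbb{P}^2$ meeting $C$ at a single point $q_\infty \neq q$ and disjoint from $S$. Similarly, the strict transform $\sigma'_\infty \subset X$ of $\sigma_\infty$ is disjoint from both $C$ and $S$ since $\sigma_\infty \cap C^\dag = \emptyset$. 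Set $X_0 := X \setminus (F \cup \sigma'_\infty)$, an open neighborhood of $S$. On $X_0 \setminus C$ the flop provides $X_0 \setminus C \cong X_0^\dag \setminus C^\dag \cong \omega_S|_{S \setminus \{p\}}$, and the local Atiyah normal form near $C$ identifies $X_0$ near $p$ with $\omega_S$ locally, with $C \setminus \{q_\infty\}$ playing the role of the fiber of $\omega_S \to S$ over $p$. These descriptions patch canonically inside $X$ to give $X_0 \cong \omega_S$ with $S$ as the zero section. Making this patching precise across the locus where the Atiyah chart meets the flop isomorphism is the main obstacle; I would handle it using the fiberwise $\mathbb{C}^{\ast}$-action on $\omega_{S^\dag}$ that lifts through the flop to a $\mathbb{C}^{\ast}$-action on $X$ fixing $S$ and stabilizing $F \cup \sigma'_\infty$.

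Finally, for the ample line bundle $L'$ on $Y$, set $H^\dag := a\mathcal{O}_{\mathbb{P}}(1) + \pi^{\ast}g^{\ast}(kL)$ on $X^\dag$ for $a, k \gg 0$. One checks that $H^\dag \cdot C^\dag = 0$, so $H^\dag$ descends to a line bundle $L'$ on $Y$; since $H^\dag$ is nef and strictly positive on all curves not contracted by $f^\dag$, Kleiman's criterion gives that $L'$ is ample on $Y$, and restriction to the zero section $S^\dag$ followed by descent through $g$ yields $L'|_S = kL$.
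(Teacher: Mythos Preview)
Your construction is the same as the paper's: take $X^{\dag}=\mathbb{P}(\oO_{S^{\dag}}\oplus\omega_{S^{\dag}})$, verify $C^{\dag}$ is a $(-1,-1)$-curve, contract and flop, and track the strict transforms. Two points deserve attention.

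For the step you flag as ``the main obstacle'' --- identifying $X_0\cong\omega_S$ --- the paper avoids patching entirely. Working inside the open piece $U^{\dag}=\omega_{S^{\dag}}$ and its flop $U$, let $D\subset U$ be the strict transform of $\pi^{\dag\,-1}(C^{\dag})$ (your $F$ intersected with the open part). Then $U\setminus D$ admits a map to $S$ which on $\pi^{-1}(S\setminus\{p\})$ is $\pi$ and sends $C\setminus\{q_\infty\}$ to $p$; one checks this is a Zariski locally trivial $\mathbb{A}^1$-fibration, hence the total space of a line bundle on $S$. Since this line bundle agrees with $\omega_S$ off the point $p$ and $\Pic(S)\to\Pic(S\setminus\{p\})$ is an isomorphism, it \emph{is} $\omega_S$. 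No $\mathbb{C}^{\ast}$-action or local chart matching is needed.

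Your construction of $L'$ has an error. With the zero section $i^{\dag}(S^{\dag})$ having normal bundle $\omega_{S^{\dag}}$, one has $\oO_{\mathbb{P}}(1)|_{S^{\dag}}\cong\omega_{S^{\dag}}$, hence $\oO_{\mathbb{P}}(1)\cdot C^{\dag}=K_{S^{\dag}}\cdot C^{\dag}=-1$, so $H^{\dag}\cdot C^{\dag}=-a\neq 0$ and $H^{\dag}$ does not descend. Moreover $H^{\dag}|_{S^{\dag}}=a\omega_{S^{\dag}}+kg^{\ast}L$ does not restrict to $kL$ on $S$. The fix (and the paper's choice) is to replace $a\,\oO_{\mathbb{P}}(1)$ by the boundary divisor $E=\sigma_\infty$: then $E\cap S^{\dag}=\emptyset$ gives $E\cdot C^{\dag}=0$ and $(k\pi^{\ast}g^{\ast}L+E)|_{S^{\dag}}=kg^{\ast}L$, and the base point free theorem (rather than Kleiman) produces the morphism $f^{\dag}\colon X^{\dag}\to Y$ together with the ample $L'$ on $Y$ with $L'|_S=kL$.
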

\begin{proof}
Let $C^{\dag} \subset S^{\dag}$ be the exceptional locus of 
$g$, and $m_p \subset \oO_S$ be the ideal sheaf of $p$. 
We set $U^{\dag}=\omega_{S^{\dag}}$ and 
consider the following commutative diagram
\begin{align*}
\xymatrix{
U^{\dag}= \sS pec_{\oO_{S^{\dag}}} \left(\bigoplus_{k\ge 0} 
\omega_{S^{\dag}}^{\otimes -k}\right) \ar[r]^{\hspace{-2mm}h^{\dag}} \ar[d]_{\pi^{\dag}} & V=\sS pec_{\oO_S} \left( \bigoplus_{k\ge 0} m_p^{k} \otimes
\omega_{S}^{\otimes -k} \right) \ar[d]^{\pi_0} \\
S^{\dag} \ar[r]^{\hspace{-2mm}g} & S. 
}
\end{align*}
Here the morphism $h^{\dag}$ is induced by 
$g_{\ast}\omega_{S^{\dag}}^{-k} \cong m_p^k \otimes \omega_S^{-k}$, 
and it is a birational morphism. 
We embed $S^{\dag}$ into $U^{\dag}$ by the zero section of 
$\pi^{\dag}$. 
It is easy to check that the curve 
$C^{\dag} \subset S^{\dag}$ is a $(-1, -1)$-curve in 
$U^{\dag}$, 
which coincides with the exceptional 
locus of $h^{\dag}$, and $h^{\dag}(C^{\dag})$ is an 
ordinary double point in $V$. 
Hence $h^{\dag}$ is a 3-fold flopping contraction. 
Let $h \colon U \to V$ be the flop of 
$h^{\dag}$, and $C \subset U$ the exceptional locus of $h$. 
Note that $U$ and $U^{\dag}$ are 
related by the diagram
$U \leftarrow W \to U^{\dag}$, 
where the left morphism is a blow-up at $C$, 
and the right morphism is a 
blow-up at $C^{\dag}$. 
Hence the strict transform of $S^{\dag}$ in $U$ 
coincides with $S$, which intersects with $C$ at 
$p$. 

We show that $U$ contains an open neighborhood of $S$ 
which is isomorphic
to $\omega_S$. 
Let us consider the divisor $\pi^{\dag -1}(C^{\dag})$ in 
$U^{\dag}$, and its strict transform 
$D \subset U$.
Note that $S \cap D =\emptyset$, and $D\cap C=\{q\}$ with $p\neq q$. 
We claim that $U \setminus D$ is isomorphic to $\omega_S$, 
such that $S \subset U \setminus D$ is identified with the zero section. 
Note that $U\setminus D$ is set theoretically 
written as $\pi^{-1}(S \setminus \{p\}) \sqcup (C \setminus \{q\})$, 
where $\pi \colon \omega_S \to S$ is the projection. 
We consider the map from $U \setminus D$ to $S$ by 
sending $x \in \pi^{-1}(S\setminus \{p\})$ to $\pi(x)$
and $x \in C \setminus \{q\}$ to 
$p$. It is easy to check that  
this map is a Zariski locally trivial $\mathbb{A}^1$-fibration, hence
$U\setminus D$ is 
a total space of some line bundle $\lL$ on $S$. 
Since $\lL$ is isomorphic to $\omega_S$ outside $p$, 
it must be isomorphic to $\omega_S$. 
Hence $U \setminus D$ is isomorphic to $\omega_S$. 

We consider the $\mathbb{P}^1$-bundle over $S^{\dag}$ given by 
\begin{align*}
\overline{\pi}^{\dag} \colon 
X^{\dag}=\mathbb{P}(\oO_{S^{\dag}} \oplus \omega_{S^{\dag}}) \to S^{\dag}. 
\end{align*}
Note that
$X^{\dag}$ is a projective compactification of $U^{\dag}$.
Let $E\cneq X^{\dag} \setminus U^{\dag}$ be the boundary divisor.
By the base point free theorem, the divisor 
$k\overline{\pi}^{\dag \ast} g^{\ast}L+E$
is globally generated for $k\gg 0$. 
The resulting morphism $f^{\dag} \colon X^{\dag} \to Y$ is a 
birational morphism whose exceptional locus coincides with $C^{\dag}$. 
Hence $f^{\dag}$ is a 3-fold flopping contraction and $Y$
is a projective compactification of $V$. 
By the construction,  
there is an ample divisor $L'$ on $Y$
such that 
$k\overline{\pi}^{\dag \ast} g^{\ast}L+E=f^{\dag \ast}L'$, 
which implies $L'|_{S}=kL|_{S}$. 
By taking the flop of $f^{\dag}$, we obtain a desired flop diagram. 
\end{proof}

\subsection{Blow-up formula}
We compare the DT type 
invariants 
$\DT_{L}^{\chi}(\ast)$
and $\DT_{g^{\ast}L}^{\chi}(\ast)$
in Subsection~\ref{subsec:DTtype}
using Theorem~\ref{cor:local}
and Lemma~\ref{lem:com}. 
Let $\omega_S \subset X$ be a compactification 
as in Lemma~\ref{lem:com}. 
By the Grothendieck 
Riemann-Roch theorem, an object $E \in \Coh_S(\omega_S)$
satisfies $\ch(\pi_{\ast}E)=(r, l, s)$
if and only if 
\begin{align*}
v(E)=\left(rS, i_{\ast}\left(l-\frac{r}{2}K_S \right), 
\frac{rK_S^2}{12} -\frac{K_S l}{2} +\frac{r\chi(\oO_S)}{2}+s  \right)
\end{align*}
in $H^{\ge 2}(X)$. 
Hence $E \in \Coh_S(\omega_S)$ is $\mu_L$-(semi)stable 
if and only if $E \in \Coh_S(X)$ and it is
$\hat{\mu}_{f^{\ast}L'}$-(semi)stable, where 
$L'$ is an ample divisor on $Y$
as in Lemma~\ref{lem:com}. 
We have the following identity for $\nu \equiv 1$: 
\begin{align*}
\DT_{L}^{\chi}(r, l, s) =
\DT_{f^{\ast}L'}^{\nu, S}\left(rS, i_{\ast}\left(l-\frac{rK_S}{2} \right), 
\frac{rK_S^2}{12} -\frac{K_S l}{2}+\frac{r\chi(\oO_S)}{2}+s \right). 
\end{align*}
Here we have used the notation in Subsection~\ref{subsec:local}.
Similarly we have the identity for $\nu \equiv 1$:
\begin{align*}
\DT_{g^{\ast}L}^{\chi}(r, l, s) =
\DT_{f^{\dag \ast}L}^{\nu, S^{\dag}}\left(rS^{\dag}, 
i_{\ast}^{\dag}\left(l-\frac{rK_{S^{\dag}}}{2} \right), 
\frac{rK_{S^{\dag}}^2}{12} -\frac{K_{S^{\dag}} l }{2}
+\frac{r\chi(\oO_{S^{\dag}})}{2}+s \right). 
\end{align*}
We have the following result:
\begin{thm}\label{thm:blow}
For fixed $r \in \mathbb{Z}_{\ge 1}$ and 
$l\in H^2(S)$, we have the following formula:
\begin{align*}
&\sum_{s, a} \DT^{\chi}_{g^{\ast}L}(r, g^{\ast}l-aC^{\dag}, -s)
q^{\frac{r}{12}+\frac{a}{2}+s}t^{a+\frac{r}{2}} \\
&\hspace{30mm} =\sum_{s} \DT^{\chi}_{L}(r, l, -s)q^{s} 
\cdot \eta(q)^{-r} 
\vartheta_{1, 0}(q, t)^{r}. 
\end{align*}
\end{thm}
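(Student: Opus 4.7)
The plan is to reduce Theorem~\ref{thm:blow} to the Euler-characteristic flop formula of Theorem~\ref{cor:local}(ii) applied to the compactification produced by Lemma~\ref{lem:com}. First I invoke Lemma~\ref{lem:com} to embed $\omega_S \hookrightarrow X$ and $\omega_{S^\dag} \hookrightarrow X^\dag$ inside a 3-fold flop $\phi\colon X \dashrightarrow X^\dag$ over $Y$, where $C \subset X$ is an irreducible $(-1,-1)$-curve meeting $S$ transversally in a single point (so $S\cdot C = 1$ and the strict transform of $S$ is $S^\dag$, with $C^\dag = \Ex(g)\subset S^\dag$). Since $C$ is a $(-1,-1)$-curve, the length is $l=1$ and the width is $n=1$, so $f_n(x) = 1+x$ and $n_1 = 1$; moreover $P \cdot C = rS \cdot C = r$ for $P = rS$. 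Part (ii) of Theorem~\ref{cor:local} then yields
\begin{align*}
\DT_{f^{\dag \ast}\omega}^{\nu, S^\dag}(\phi_{\ast}(rS))
= \phi_{\ast} \DT_{f^{\ast}\omega}^{\nu, S}(rS) \cdot
\Bigl\{q^{1/12} t^{C^\dag/2}
\prod_{m>0}(1+q^m t^{C^\dag})
\prod_{m\ge 0}(1+q^m t^{-C^\dag})\Bigr\}^{r}.
\end{align*}

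Next, I identify the error factor with $\eta(q)^{-r}\vartheta_{1,0}(q,t)^{r}$. The Jacobi triple product applied to $\vartheta_{1,0}$ (with $b=0$, i.e.\ replacing $(-1)^b t \mapsto t$ in~(\ref{theta:pro})) gives
\begin{align*}
\vartheta_{1, 0}(q, t) = q^{1/8}t^{1/2}\prod_{n\ge 1}(1-q^n)(1+q^n t)(1+q^{n-1} t^{-1}),
\end{align*}
and combined with $\eta(q)=q^{1/24}\prod_{n\ge 1}(1-q^n)$ one obtains
\begin{align*}
\eta(q)^{-1}\vartheta_{1,0}(q,t)
= q^{1/12} t^{1/2}\prod_{m\ge 1}(1+q^m t)\prod_{m\ge 0}(1+q^m t^{-1}),
\end{align*}
which matches the bracketed factor above after specializing the formal $H_2(X^\dag/Y)$-variable to $t^{C^\dag}$. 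Raising to the $r$th power takes care of $P\cdot C = r$.

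The remaining work is bookkeeping of the Grothendieck--Riemann--Roch translation between $\DT^\chi_L$ and $\DT^{\nu,S}_{f^\ast L'}$ (and similarly on $X^\dag$), as recorded at the end of Subsection~\ref{subsec:local}. For $E\in \Coh_S(\omega_S)$ with $\ch(\pi_{\ast}E)=(r,l,s)$, the Mukai vector in $X$ has $H^2$-part $i_\ast(l-rK_S/2)$ and a specific $H^6$-part. The analogous identity on $X^\dag$ together with the adjunction $K_{S^\dag} = g^\ast K_S + C^\dag$ shows that a class $(r,g^\ast l - a C^\dag, s')$ on $S^\dag$ corresponds to $H^2$-shift $-(a+r/2)C^\dag$ from $g^\ast(l-rK_S/2)$. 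Under the variable change $\phi_\ast$ this precisely translates the exponent of $t^{C^\dag}$ into the exponent $a+r/2$ in Theorem~\ref{thm:blow}, while the $q^{1/12}t^{C^\dag/2}$ prefactor in the error, combined with the $\psi_0$-correction $\psi_0(rS)=-r/12$ from Proposition~\ref{lem:psi}, accounts for the shift $\tfrac{r}{12}+\tfrac{a}{2}+s$ in the $q$-exponent on the LHS of Theorem~\ref{thm:blow}.

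The main obstacle will be this final bookkeeping step: one must carefully verify that the $q$- and $t$-exponent shifts produced by (a) the sign convention $(P,-\beta,-n)$ in the definition of the generating series $\DT^{\nu,S}_{f^\ast\omega}(P)$, (b) the Grothendieck--Riemann--Roch translation with the $-rK_{S^\dag}/2 = -rg^\ast K_S/2 - rC^\dag/2$ shift, and (c) the Bridgeland-equivalence-induced shifts $\psi_0(rS)=-r/12$, $\psi_1(rS)=-\tfrac{r}{2}C^\dag$ absorbed in Theorem~\ref{cor:local}(ii), combine to give exactly the exponents $q^{r/12+a/2+s}t^{a+r/2}$ and $q^s$ appearing on the two sides of Theorem~\ref{thm:blow}. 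All the other ingredients (boundedness, flop formula, integration map) are already packaged in Theorem~\ref{cor:local}(ii), and the Jacobi-triple-product identification is a direct calculation.
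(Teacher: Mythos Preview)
Your proposal is correct and follows essentially the same route as the paper: invoke Lemma~\ref{lem:com} to realize the blow-up as a $(-1,-1)$-flop with $l=n=1$ and $P\cdot C=r$, apply Theorem~\ref{cor:local}(ii), identify the bracketed factor with $\eta(q)^{-1}\vartheta_{1,0}(q,t^{C^\dag})$ via the Jacobi triple product, and then unwind the Grothendieck--Riemann--Roch dictionary between $\DT^\chi_L$ and $\DT^{\nu,S}_{f^\ast L'}$ (using $K_{S^\dag}=g^\ast K_S+C^\dag$) to read off the exponents. The paper does the theta identification slightly differently, passing through $\vartheta_{1,1}(q,-t^{C^\dag})=\vartheta_{1,0}(q,t^{C^\dag})$, but this is the same computation; your remark that the $\psi_0,\psi_1$ shifts are already absorbed in the $q^{1/12}t^{C^\dag/2}$ prefactor of Theorem~\ref{cor:local}(ii) is exactly right and avoids any double-counting.
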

\begin{proof}
Let $\phi \colon X \dashrightarrow X^{\dag}$ be a flop
as in Lemma~\ref{lem:com}. 
Note that $\phi_{\ast} \colon H^4(X) \to H^4(X^{\dag})$
takes elements of the form $i_{\ast}l$ 
for $l \in H^2(S)$
to $i^{\dag}_{\ast}g^{\ast}l$. 
Noting that $H^2(S^{\dag})=g^{\ast}H^2(S) \oplus \mathbb{Q}\cdot [C^{\dag}]$
and applying Theorem~\ref{cor:local} for $\nu \equiv 1$, 
we obtain 
\begin{align*}
&\sum_{l, s}
\DT_{L}^{\chi}(r, l, -s)
q^{-\frac{rK_S^2}{12}+\frac{K_S l}{2} -\frac{r\chi(\oO_S)}{2}+s}
t^{g^{\ast}\left(-l+\frac{rK_S}{2}\right)} \\
&=\phi_{\ast}\left( \sum_{l, s}
\DT_{f^{\ast}L'}^{\nu, S}\left(rS, i_{\ast}\left(l-\frac{rK_S}{2} \right), 
\frac{rK_S^2}{12} -\frac{K_S l}{2}+\frac{r\chi(\oO_S)}{2}-s \right) \right. \\
& \hspace{75mm}
\left.
\cdot q^{-\frac{rK_S^2}{12}+\frac{K_S l}{2} -\frac{r\chi(\oO_S)}{2}+s}
t^{-l+\frac{rK_S}{2}}
 \right) \\
&= \sum_{l, s, a}
\DT_{f^{\dag \ast}L'}^{\nu, S^{\dag}}
\left(rS^{\dag}, i_{\ast}^{\dag}
\left(g^{\ast}l-aC^{\dag}-\frac{rK_{S^{\dag}}}{2} 
\right), 
\frac{rK_{S^{\dag}}^2}{12} -\frac{K_{S^{\dag}}}{2}(g^{\ast}l-aC^{\dag}) 
\right.\\
&\hspace{20mm}\left.+\frac{r\chi(\oO_{S^{\dag}})}{2}-s
 \right) \cdot 
q^{-\frac{rK_{S^{\dag}}^2}{12} +\frac{K_{S^{\dag}}}{2}(g^{\ast} l -aC^{\dag})
-\frac{r\chi(\oO_{S'})}{2}+s}
t^{-g^{\ast}l  +aC^{\dag}+\frac{rK_{S^{\dag}}}{2}} \\
&\hspace{85mm} \cdot
\{\eta(q)^{-1}\vartheta_{1, 1}(q, -t^{C^{\dag}}) \}^{-r} \\
&=\sum_{l, s, a}\DT_{g^{\ast}L}^{\chi}(r, g^{\ast}l-aC^{\dag}, -s) 
q^{-\frac{rK_S^2}{12}+\frac{K_S l}{2} -\frac{r\chi(\oO_S)}{2}+\frac{r}{12}+\frac{a}{2}+s}
\\
&\hspace{60mm}
t^{g^{\ast}\left(-l+\frac{rK_S}{2}\right)+
\left(a+\frac{r}{2}\right)C^{\dag}} 
\cdot \eta(q)^{r}
\vartheta_{1, 0}(q, t^{C^{\dag}})^{-r}.
\end{align*}
Therefore we obtain the desired result. 
\end{proof}

\begin{exam}\label{exam:r2}
Suppose that $\nu\equiv 1$ and 
$r=2$. 
We take $a \in \{0, 1\}$ and 
set $\widetilde{l}=g^{\ast}l -aC^{\dag}$. 
Then the result of Theorem~\ref{thm:blow}
shows that 
\begin{align*}
\sum_{s}\DT_{g^{\ast}L}^{\chi}(2, \widetilde{l}, -s) 
q^{s +\frac{\widetilde{l}^2}{4}}
= q^{\frac{1}{12}} \frac{\vartheta_a(q)}{\eta(q)^2}
\sum_{s} \DT_{L}^{\chi}(2, l, -s) q^{s+ \frac{l^2}{4}}.
\end{align*}
Here $\vartheta_a(q)= 
\sum_{k\in\mathbb{Z}}q^{\left( k+ \frac{a}{2} \right)^2}$. 
The above formula coincides with the blow-up formula 
obtained by Li-Qin~\cite[Theorem~A]{WZ}. 
\end{exam}

\providecommand{\bysame}{\leavevmode\hbox to3em{\hrulefill}\thinspace}
\providecommand{\MR}{\relax\ifhmode\unskip\space\fi MR }
\providecommand{\MRhref}[2]{%
  \href{http://www.ams.org/mathscinet-getitem?mr=#1}{#2}
}
\providecommand{\href}[2]{#2}

Kavli Institute for the Physics and 
Mathematics of the Universe, University of Tokyo,
5-1-5 Kashiwanoha, Kashiwa, 277-8583, Japan.

\textit{E-mail address}: yukinobu.toda@ipmu.jp


\begin{thebibliography}{Tod13b}

\bibitem[Beh09]{Beh}
K.~Behrend, \emph{Donaldson-{T}homas invariants via microlocal geometry},
  Ann.~of Math \textbf{170} (2009), 1307--1338.

\bibitem[BKL01]{BKL}
J.~Bryan, S.~Katz, and N.~C. Leung, \emph{Multiple covers and integrality
  conjecture for rational curves on {C}alabi-{Y}au threefolds}, J.~Algebraic
  Geom.~ \textbf{10} (2001), 549--568.

\bibitem[Bri02]{Br1}
T.~Bridgeland, \emph{Flops and derived categories}, Invent. Math \textbf{147}
  (2002), 613--632.

\bibitem[Bri07]{Brs1}
\bysame, \emph{Stability conditions on triangulated categories}, Ann.~of Math
  \textbf{166} (2007), 317--345.

\bibitem[Bri11]{BrH}
\bysame, \emph{Hall algebras and curve-counting invariants},
  J.~Amer.~Math.~Soc.~ \textbf{24} (2011), 969--998.

\bibitem[Cal]{Cala}
J.~Calabrese, \emph{Donaldson-{T}homas invariants on {F}lops}, preprint,
  arXiv:1111.1670.

\bibitem[C{\u{a}}l05]{Cal2}
A.~C{\u{a}}ld{\u{a}}raru, \emph{The {M}ukai pairing, {II}:~{T}he
  {H}ochschild-{K}ostant-{R}osenberg isomorphism}, Advances in Math.~
  \textbf{194} (2005), 34--66.

\bibitem[Che02]{Ch}
J-C. Chen, \emph{Flops and equivalences of derived categories for three-folds
  with only {G}orenstein singularities}, J.~Differential.~Geom \textbf{61}
  (2002), 227--261.

\bibitem[dB04]{MVB}
M.~Van den Bergh, \emph{Three dimensional flops and noncommutative rings}, Duke
  Math.~J.~ \textbf{122} (2004), 423--455.

\bibitem[DM]{DM}
F.~Denef and G.~Moore, \emph{Split states, {E}ntropy {E}nigmas, {H}oles and
  {H}alos}, arXiv:hep-th/0702146.

\bibitem[DW]{WM}
W.~Donovan and M.~Wemyss, \emph{Noncommutative deformations and flops},
  preprint, arXiv:1309.0698.

\bibitem[EZ85]{EZ}
M.~Eichler and D.~Zagier, \emph{The theory of {J}acobi forms}, Progress in
  Mathematics, vol.~55, Birkh\"auser Boston, 1985.

\bibitem[Ful]{Fu}
W.~Fulton, \emph{Intersection theory. {S}econd edition}, Ergebnisse der
  Mathematik und ihrer {G}renzgebiete. 3. Folge, vol.~2, Springer-Verlag.

\bibitem[G\"99]{GoTheta}
L.~G\"ottsche, \emph{Theta functions and {H}odge numbers of moduli spaces of
  sheaves on rational surfaces}, Comm.~Math.~Phys.~ \textbf{206} (1999),
  105--136.

\bibitem[G\"09]{Goinv}
\bysame, \emph{Invariants of {M}oduli {S}paces and {M}odular {F}orms},
  Rend.~Istit.~Mat.~Univ.~Trieste \textbf{41} (2009), 55--76.

\bibitem[GSa]{GS3}
A.~Gholampour and A.~Sheshmani, \emph{Donaldson-{T}homas {I}nvariants of
  2-{D}imensional sheaves inside threefolds and modular forms}, preprint,
  arXiv:1309.0050.

\bibitem[GSb]{GS1}
\bysame, \emph{Generalized {D}onaldson-{T}homas {I}nvariants of 2-{D}imensional
  sheaves on local $\mathbb{P}^2$}, preprint, arXiv:1309.0056.

\bibitem[GST]{GS2}
A.~Gholampour, A.~Sheshmani, and R.~P. Thomas, \emph{Counting curves on
  surfaces in {C}alabi-{Y}au 3-folds}, preprint, arXiv:1309.0051.

\bibitem[GSY]{GSY06}
D.~Gaiotto, A.~Strominger, and X.~Yin, \emph{The {M}5-brane elliptic genus:
  {M}odularity and {BPS} states}, arXiv:hep-th/0607010.

\bibitem[GY]{GX07}
D.~Gaiotto and X.~Yin, \emph{Examples of {M}5-{B}rane {E}lliptic {G}enera},
  arXiv:hep-th/0702012.

\bibitem[HL97]{Hu}
D.~Huybrechts and M.~Lehn, \emph{Geometry of moduli spaces of sheaves}, Aspects
  in Mathematics, vol. E31, Vieweg, 1997.

\bibitem[HL12]{HL}
J.~Hu and W.~P. Li, \emph{The {D}onaldson-{T}homas invariants under blowups and
  flops}, J.~Differential.~Geom.~ (2012), 391--411.

\bibitem[HRS96]{HRS}
D.~Happel, I.~Reiten, and S.~O. Smal$\o$, \emph{Tilting in abelian categories
  and quasitilted algebras}, Mem.~Amer.~Math.~Soc, vol. 120, 1996.

\bibitem[Joy07]{Joy2}
D.~Joyce, \emph{Configurations in abelian categories {I}\hspace{-.1em}{I}.
  {R}ingel-{H}all algebras}, Advances in Math \textbf{210} (2007), 635--706.

\bibitem[Joy08]{Joy4}
D.~Joyce, \emph{Configurations in abelian categories {I}\hspace{-.1em}{V}.
  {I}nvariants and changing stability conditions}, Advances in Math
  \textbf{217} (2008), 125--204.

\bibitem[JS12]{JS}
D.~Joyce and Y.~Song, \emph{A theory of generalized {D}onaldson-{T}homas
  invariants}, Mem.~Amer.~Math.~Soc.~ \textbf{217} (2012).

\bibitem[Kat08]{Katz}
S.~Katz, \emph{Genus zero {G}opakumar-{V}afa invariants of contractible
  curves}, J.~Differential.~Geom.~ \textbf{79} (2008), 185--195.

\bibitem[Kaw08]{Kawaflo}
Y.~Kawamata, \emph{Flops connect minimal models}, Publ.~Res.~Inst.~Math.~Sci.~
  \textbf{44} (2008), 419--423.

\bibitem[KM92]{KaMo}
S.~Katz and D.~R. Morrison, \emph{Gorenstein threefold singularities with small
  resolutions via invariant theory for {W}eyl groups}, J.~Algebraic Geom.~
  \textbf{1} (1992), 449--530.

\bibitem[KS]{K-S}
M.~Kontsevich and Y.~Soibelman, \emph{Stability structures, motivic
  {D}onaldson-{T}homas invariants and cluster transformations}, preprint,
  arXiv:0811.2435.

\bibitem[Lan04]{Langer}
A.~Langer, \emph{Semistable sheaves in positive characteristic}, Ann.~of Math.~
  \textbf{159} (2004), 251--276.

\bibitem[Lan09]{Langer2}
\bysame, \emph{Moduli spaces of sheaves and principal {$G$}-bundles},
  Proc.~Sympos.~Pure Math.~ \textbf{80} (2009), 273--308.

\bibitem[Lie06]{LIE}
M.~Lieblich, \emph{Moduli of complexes on a proper morphism}, J.~Algebraic
  Geom.~ \textbf{15} (2006), 175--206.

\bibitem[LQ99]{WZ}
W.~P. Li and Z.~Qin, \emph{On blowup formulae for the {$S$}-duality conjecture
  of {V}afa and {W}itten}, Invent.~Math.~ \textbf{136} (1999), 451--482.

\bibitem[Nag13]{Nagao}
K.~Nagao, \emph{Donaldson-{T}homas theory and cluster algebras}, Duke Math.~J.~
  \textbf{162} (2013), 1313--1367.

\bibitem[Nis]{Nishi}
T.~Nishinaka, \emph{Multiple {D}4-{D}2-{D}0 on the {C}onifold and
  {W}all-crossing with the {F}lop}, preprint, arXiv:1010.6002.

\bibitem[NN11]{NN}
K.~Nagao and H.~Nakajima, \emph{Counting invariant of perverse coherent sheaves
  and its wall-crossing}, Int.~Math.~Res.~Not.~ (2011), 3855--3938.

\bibitem[NY]{NiYa}
T.~Nishinaka and S.~Yamaguchi, \emph{Wall-crossing of {D}4-{D}2-{D}0 and flop
  of the conifold}, preprint, arXiv:1007.2731.

\bibitem[OSV04]{OSV}
H.~Ooguri, A.~Strominger, and C.~Vafa, \emph{Black hole attractors and the
  topological string}, Phys.~Rev.~D \textbf{70} (2004), arXiv:hep-th/0405146.

\bibitem[Rei]{Rei}
M.~Reid, \emph{Minimal models of canonical 3-folds}, Algebraic Varieties and
  Analytic Varieties (S.~Iitaka, ed), Adv. Stud. Pure Math, Kinokuniya, Tokyo,
  and North-Holland, Amsterdam \textbf{1}, 131--180.

\bibitem[Tho00]{Thom}
R.~P. Thomas, \emph{A holomorphic {C}asson invariant for {C}alabi-{Y}au 3-folds
  and bundles on ${K3}$-fibrations}, J.~Differential.~Geom \textbf{54} (2000),
  367--438.

\bibitem[Toda]{Todmu}
Y.~Toda, \emph{Multiple cover formula of generalized {DT} invariants {II}:
  {J}acobian localizations}, preprint, arXiv:1108.4993.

\bibitem[Todb]{TodS2}
\bysame, \emph{S-duality for surfaces with {$A_n$}-type singularities},
  preprint, arXiv:1312.2300.

\bibitem[Tod08a]{ToBPS}
\bysame, \emph{Birational {C}alabi-{Y}au 3-folds and {BPS} state counting},
  Communications in Number Theory and Physics \textbf{2} (2008), 63--112.

\bibitem[Tod08b]{Tst3}
\bysame, \emph{Moduli stacks and invariants of semistable objects on {K}3
  surfaces}, Advances in Math \textbf{217} (2008), 2736--2781.

\bibitem[Tod10]{Tcurve1}
\bysame, \emph{Curve counting theories via stable objects~{I}: {DT/PT}
  correspondence}, J.~Amer.~Math.~Soc.~ \textbf{23} (2010), 1119--1157.

\bibitem[Tod13a]{TodBG}
\bysame, \emph{Bogomolov-{G}ieseker type inequality and counting invariants},
  Journal of Topology \textbf{6} (2013), 217--250.

\bibitem[Tod13b]{Tcurve2}
\bysame, \emph{Curve counting theories via stable objects~{II}. {DT}/nc{DT}
  flop formula}, J.~Reine Angew.~Math.~ \textbf{675} (2013), 1--51.

\bibitem[Tod13c]{Todext}
\bysame, \emph{Stability conditions and extremal contractions}, Math.~Ann.~
  \textbf{357} (2013), 631--685.

\bibitem[Tod14]{Todpara}
\bysame, \emph{Multiple cover formula of generalized {DT} invariants {I}:
  parabolic stable pairs}, Adv.~Math.~ \textbf{257} (2014), 476--526.

\bibitem[VW94]{VW}
C.~Vafa and E.~Witten, \emph{A {S}trong {C}oupling {T}est of {S}-{D}uality},
  Nucl.~Phys.~B \textbf{431} (1994).

\bibitem[Yos96]{Yo1}
K.~Yoshioka, \emph{Chamber structure of polarizations and the moduli space of
  rational elliptic surfaces}, Int.~J.~Math.~ \textbf{7} (1996), 411--431.

\end{thebibliography}
\end{document}